\let\mathbb=\varmathbb
\colorlet{MyBlue}{DodgerBlue!75!Black}
\colorlet{MyGreen}{DarkGreen!95!Black}
\numberwithin{equation}{section}  
\crefname{example}{Ex.}{Exs.}
\newcommand{\pmat}[1]{\begin{pmatrix} #1 \end{pmatrix}}
\newcommand{\eps}{\varepsilon}
\DeclareMathOperator*{\argmin}{argmin}
\DeclareMathOperator{\dom}{dom}
\DeclareMathOperator{\gr}{graph}
\DeclareMathOperator{\Id}{Id}
\newcommand{\const}{\mathtt{c}}
\newcommand{\ca}{\mathtt{a}}
\newcommand{\ce}{\mathtt{e}}
\newcommand{\bA}{{\mathbf A}}
\newcommand{\bY}{\mathbf{Y}}
\newcommand{\bx}{\mathbf{x}}
\newcommand{\by}{\mathbf{y}}
\newcommand{\bu}{\mathbf{u}}
\newcommand{\bs}{\mathbf{s}}
\newcommand{\bz}{\mathbf{z}}
\newcommand{\bZ}{\mathbf{Z}}
\newcommand{\bw}{\mathbf{w}}
\newcommand{\bW}{\mathbf{W}}
\newcommand{\bv}{\mathbf{v}}
\renewcommand{\iff}{\Leftrightarrow}
\renewcommand{\emptyset}{\varnothing}
\newcommand{\eqdef}{\triangleq}
\newcommand{\scrA}{\mathcal{A}}
\newcommand{\scrB}{\mathcal{B}}
\newcommand{\scrD}{\mathcal{D}}
\newcommand{\scrF}{\mathcal{F}}
\newcommand{\scrG}{\mathcal{G}}
\newcommand{\scrI}{\mathcal{I}}
\newcommand{\scrK}{\mathcal{K}}
\newcommand{\scrL}{\mathcal{L}}
\newcommand{\scrO}{\mathcal{O}}
\newcommand{\scrS}{\mathcal{S}}
\newcommand{\scrX}{\mathcal{X}}
\newcommand{\scrY}{\mathcal{Y}}
\newcommand{\Nscr}{{\cal N}}
\newcommand{\Tscr}{{\cal T}}
\newcommand{\Kscr}{{\cal K}}
\newcommand{\Fscr}{{\cal F}}
\newcommand{\Real}{\mathbb{R}}
\renewcommand{\Pr}{\mathbb{P}}
\newcommand{\Ex}{\mathbb{E}}
\newcommand{\F}{{\mathbb{F}}}
\newcommand{\Normal}{\mathsf{N}}
\newcommand{\Uniform}{\mathsf{U}}
\newcommand{\Vol}{{\mathbf{Vol}}}
\newcommand{\1}{\mathbf{1}}
\newcommand{\Rn}{\R^n}
\newcommand{\R}{\mathbb{R}}
\newcommand{\N}{\mathbb{N}}
\newcommand{\sphere}{\mathbb{S}}
\DeclareMathOperator{\NC}{\mathsf{NC}}
\newcommand{\ball}{\mathbb{B}}
\newcommand{\nash}{\mathbf{NE}}
\DeclareMathOperator{\VI}{VI}
\DeclareMathOperator{\MVI}{MVI}
\DeclareMathOperator{\SOL}{SOL}
\DeclareMathOperator{\OC}{\mathsf{OC}}
\DeclareMathOperator{\SFBF}{\texttt{SFBF}}
\DeclareMathOperator{\ISFBF}{\texttt{ISFBF}}
\DeclareMathOperator{\VRHGS}{\texttt{VRHGS}}
\theoremstyle{plain}
\newtheorem{theorem}{Theorem}
\newtheorem*{corollary*}{Corollary}
\newtheorem{lemma}[theorem]{Lemma}
\newtheorem{proposition}[theorem]{Proposition}
\newtheorem{fact}[theorem]{Fact}
\theoremstyle{definition}
\newtheorem{definition}[theorem]{Definition}
\newtheorem*{definition*}{Definition}
\newtheorem{assumption}{Assumption}
\renewcommand{\qed}{\hfill{\footnotesize$\blacksquare$}}
\theoremstyle{remark}
\newtheorem{remark}{Remark}
\newtheorem*{remark*}{Remark}
\newtheorem*{notation*}{Notational remark}
\numberwithin{theorem}{section}
\numberwithin{remark}{section}
\numberwithin{example}{section}
\DeclarePairedDelimiter{\inner}{\langle}{\rangle}
\title{A regularized variance-reduced modified extragradient method for stochastic hierarchical games}
\date{\today}
\author[1]{\small Shisheng Cui}
\author[2]{\small Uday V. Shanbhag}
\author[3]{\small Mathias Staudigl\thanks{Corresponding Author}}
\affil[1]{\footnotesize School of Automation, Beijing Institute of Technology\\
Beijing 100081, China\\
(\href{mailto:css@bit.edu.cn}{css@bit.edu.cn})
}
\affil[2]{\footnotesize Department of Industrial and Manufacturing Engineering \\
Pennsylvania State University, University Park, USA \\
(\href{mailto:udaybag@psu.edu}{udaybag@psu.edu})}
\affil[3]{\footnotesize Department of Mathematics, Universität Mannheim, \\
B6, 68159, Mannheim, Germany\\
(\href{mailto:mathias.staudigl@uni-mannheim.de}{mathias.staudigl@uni-mannheim.de})}
\begin{document}

\maketitle

\begin{abstract}%
We consider an $N$-player hierarchical game in which the $i$th player's
objective comprises of an expectation-valued term, parametrized by rival
    decisions, and a hierarchical term. Such a framework allows for capturing a
    broad range of stochastic hierarchical optimization problems, Stackelberg
    equilibrium problems, and leader-follower games. We develop an iteratively regularized and smoothed
    variance-reduced modified extragradient framework for iteratively approaching hierarchical equilibria in a stochastic setting. We equip our analysis
    with rate statements, complexity guarantees, and almost-sure convergence
    results. We then extend these statements to settings where the lower-level problem is solved inexactly and provide the corresponding rate and complexity statements. Our model framework encompasses many game theoretic equilibrium problems studied in the context of power markets. We present a realistic application to the virtual power plants, emphasizing the role of hierarchical decision making and regularization.
    \end{abstract}

\section{Introduction}
\label{sec:intro}
%
In this paper we consider a class of stochastic hierarchical optimization problems and games, generalizing many learning problems involving sequential optimization. Consider a collection of $N$-agents, where the $i$-th agent solves the optimization problem parametrized by rival decisions $\bx_{-i}$:
\begin{equation}\label{eq:Opt}\tag{P}
\min_{\bx_{i},\by_{i}}\, \{\ell_{i}(\bx_{i},\bx_{-i},\by_{i})\eqdef f_{i}(\bx_{i},\bx_{-i})+g_{i}(\bx_{i},\by_{i})\},\text{ s.t. }\bx_{i}\in\scrX_{i},\by_{i}\in\SOL(\phi_{i}(\bx_{i},\cdot),\scrY_{i}).
\end{equation}
We let $i\in\scrI\eqdef\{1,\ldots,N\}$ represent a set of \emph{leaders},
characterized by two loss functions: (i) $f_{i}(\bx)\eqdef\Ex_{\xi}[F_{i}(\bx,\xi)]$, which depends on the entire action profile $\bx\eqdef (\bx_{i},\bx_{-i})=(\bx_{1},\ldots,\bx_{N})\in\scrX\eqdef\prod_{i\in\scrI}\scrX_{i}$;  (ii) $g_{i}(\bx_{i},\by_{i})$ is a deterministic function, jointly controlled
by leader $i$'s decision variable $\bx_{i}\in\scrX_{i}$ and a follower's
decision variable $\by_{i}\in\scrY_{i}$. Each leader's optimization problem
exhibits two sets of  private constraints, the first given by $\bx_{i}\in\scrX_{i}\subseteq\R^{n_{i}}$, while the second are
equilibrium constraints represented by the solution set of a parameterized
variational inequality $\VI(\phi_{i}(\bx_{i},\cdot),\scrY_{i})$, which reads as
\begin{equation}\label{eq:VI}
\text{Find }\by_{i}(\bx_{i})\in\scrY_{i}\text{ satisfying }\inner{\phi_{i}(\bx_{i},\by_{i}(\bx_{i})),\by_{i}-\by_{i}(\bx_{i})}\geq 0\quad \forall \by_{i}\in\scrY_{i}.
\end{equation}
We denote the set of points $\by_{i}$ satisfying this condition by $\SOL(\phi_{i}(\bx_{i},\cdot),\scrY_{i})$.
This VI is defined in terms of a closed convex set
$\scrY_{i}\subseteq\R^{m_{i}}$ and an expectation-valued mapping
$\phi_{i}(\bx,\by_{i})\eqdef\Ex_{\xi}[\Phi_{i}(\bx,\by_{i},\xi)]$. All the
problem data are affected by random noise represented by a random variable
$\xi:\Omega\to\Xi$, defined on a probability space $(\Omega,\scrF,\Pr)$ and
taking values in a measurable space $\Xi$. Such hierarchical optimization
problems traditionally play a key role in operations research and engineering,
where they are deeply connected to bilevel programming
\citep{dempe02foundations} and mathematical programs under equilibrium
constraints (MPEC) \citep{Luo:1996td}. In fact, the canonical MPEC formulation
is obtained from \eqref{eq:Opt} when $N=1$. The multi-agent formulation
\eqref{eq:Opt} also relates to leader-follower and Stackelberg games
(cf.~\citep{sherali1984multiple,pang05quasi,DeMiguel:2009aa,kulkarni15existence})
which are a traditional model in economics, and also have received increased
attention in machine learning recently
\citep{BilevelLearningIEEE,Fiez:2020aa,NEURIPS2021_d8211837,Blau:2022aa}.
Economic equilibria in power markets have been extensively studied using a
complementarity framework
(cf.~\cite{hobbs01linear,hobbspang,hobbs07nash,gabriel2013optimality}). More
recently, stochastic generalizations have been examined where uncertainty in
price and cost functions have been
addressed~\cite{shanbhag11complementarity,kannan2011strategic}.  As an
immediate application of our algorithmic framework, we present in Section
\ref{sec:application} a model inspired by Hobbs and Pang~\cite{hobbs07nash},
but suitably modified to account for uncertainty in prices and costs,
multi-period settings with ramping constraints, and the  incorporation of
virtual power plants (VPPs) (see~\cite{NAVAL2021111393,DOE_VPPLiftOff_2023} for
a review of VPPs and power markets).

\subsection{Our contributions and related work}

\paragraph{Hierarchical optimization, games and uncertain generalizations.}

To date, hierarchical optimization has been studied under the umbrella of
bilevel programming~\citep{dempe02foundations,dempe20bilevel} and mathematical programs with
equilibrium constraints (MPECs)~\citep{Luo:1996td,outrata98nonsmooth}.
Algorithmic schemes for resolving MPECs where the lower-level problem
is an optimization problem, or a variational inequality, have largely emphasized either implicit
approaches~\citep{outrata98nonsmooth} or regularization/penalization-based
techniques~\citep{Luo:1996td,leyffer2006interior,xu04convergence}.
Yet, there appears to have been a glaring lacuna in non-asymptotic rate and
complexity guarantees for resolving hierarchical optimization and their
stochastic and game-theoretic variants. This gap has been
partially addressed in the recent papers \cite{Cui:2022aa} and \cite{Cui2022}. Both papers
present variance-reduced solution strategies for various versions of
hierarchical optimization problems and games, respectively, relying on variance
reduction via a sequence of increasing mini-batches. Finite-time and almost
sure convergence to solutions is proved under convexity/monotonicity
assumptions on the problem data. However, the framework for monotone games
in~\cite{Cui2022} requires exact solutions of lower-level problems,
significantly impacting its efficient implementation in large-scale settings.
We complement this literature by developing a novel regularized
smoothed variance reduction method for the family of hierarchical games
\eqref{eq:Opt}, building on a disciplined operator splitting approach.
Notably, we provide an inexact generalization allowing for random
error-afflicted lower-level solutions, addressing a significant shortcoming
in~\cite{Cui2022}.  Specifically, in this paper we improve \cite{Cui2022} along
two important dimensions: (i) First, we allow for inexact resolution of
lower-level problems to accommodate large-scale stochastic follower problems;
(ii) Second, we provide a novel variance-reduction framework for addressing
this problem. Despite the need for inexactness, our statements match the
state-of-the-art both in terms of rate and oracle complexity. Algorithmically,
these advancements are achieved via a novel stochastic operator splitting
approach that combines ideas from iterative smoothing and regularization
\cite{YouNedSha17}, with modern variance reduction approaches originating in
machine learning \cite{Gower:2020ty}.

\paragraph{Zeroth-order optimization, smoothing, and regularization.}
Zeroth-order (gradient-free) optimization is being increasingly embraced for
solving machine learning problems where explicit expressions of the gradients
are difficult or infeasible to obtain. In hierarchical optimization problems,
this is particularly relevant when solutions of the lower level problem are
injected into the leader's upper-level problem. In machine learning, this
problem is known as approximating the hypergradient. Various techniques for
estimating this object have been studied recently, ranging from truncated von
Neuman series \cite{Ghadimi:2018aa} and fully first-order methods
\cite{Kwon:2023aa}. Instead of computationally expensive first-order (or
higher) information about the problem data, we develop an online stochastic
approximation approach based solely on function evaluations in order to
approximate the directional derivative of the coupling function between the
upper and the lower level. Approximating the directional derivative of the thus
obtained implicit function has a long history~\citep{steklov1907} and has been
employed for resolving stochastic optimization
~\cite{hariharan08decentralized,YousNedSha10,Farzad1} and variational
inequality problems~\cite{yousefian2013regularized,YouNedSha17}. 


\paragraph{Variance Reduction.}
Variance reduction is a commonly employed method exploiting the finite-sum
structure of variational problems arising in machine learning and engineering.
The classical stochastic variance reduced gradient (SVRG)
\citep{johnson13accelerating} is embedded within a double loop structure and
tailored to the prototypical finite-sum structure in empirical risk
minimization. Indeed, in the classical SVRG formulation full gradients are
computed ``from time-to-time'' in the outer loop while cheap variance reduced
gradients are used in the frequently activated inner loop subroutine. This
construction has been extended to saddle-point problems and stochastic monotone
inclusions in \cite{PalBac16}. Extensions to monotone \emph{mixed}-variational
inequality problems were recently provided in
\citep{Alacaoglu:2021aa,Alacaoglu:2022aa} and \cite{ChavGidelNips19}. In
contrast, we consider stochastic hierarchical games over general sample spaces,
complicated by the presence of nested optimization problems embodied by the
interaction between leaders and followers. While the assumptions we make in
this work allow us to recast the problem as a mixed variational inequality,
several challenges persist. First, subgradients of $g_i(\cdot,\by_i(\cdot))$
are unavailable; Second, enlisting smoothing approaches requires
$\by_i(\bx_i)$, unavailable in closed form; Third, we are not restricted to
finite-sum regimes and allow for general sample spaces by employing
increasingly large batch-sizes to approximate the gradient in the outer loop.

%

\section{Preliminaries}\label{sec:prelims}
%

In this section, we articulate the standing assumptions employed in this paper and introduce our notation. The decision set of leader $i$ is a subset $\scrX_{i}$ in $\R^{n_{i}}$. We let $\scrX\eqdef\prod_{i\in\scrI}\scrX_{i}$ represent the set of strategy profiles of the leaders and identify it with a subset of $\Rn$, where $n\eqdef\sum_{i\in\scrI}n_{i}$. For any $d\geq 1$, we let $\ball_{d}\eqdef\{\bx\in\R^{d}\vert\;\norm{\bx}\leq 1\}$ denote the unit ball in $\R^{d}$.
We start by introducing a basic assumption on the follower's problem. It bears reminding that the VI representation of the follower problem allows for capturing a range of problems, ranging from smooth convex optimization problems to more intricate smooth convex games and equilibrium problems; see~\cite{FacPan03}.\footnote{Appendix \ref{app:VIs} explains the terminology related to VIs.}
\begin{assumption}\label{ass:LLunique}
   For $i \, \in \, \scrI$, $\scrY_i\subseteq\R^{m_{i}}$ is closed and convex set, and for all $\bx_{i}\in\scrX_{i}$, the mapping $\phi_i(\bx_i,\cdot):\scrY_{i}\to\R^{m_{i}}$ is strongly monotone and Lipschitz continuous.
\end{assumption}
By Assumption~\ref{ass:LLunique}, for any $i \, \in \, \scrI$, the set of solutions to $\VI(\phi_{i}(\bx_{i},\cdot),\scrY_{i})$, denoted by $\SOL(\phi_{i}(\bx_{i},\cdot),\scrY_{i})$, is single-valued with unique element $\by_{i}(\bx_{i})$. Moreover, $\phi_i(\bx_i,\by_i) = \mathbb{E}_{\xi} \left[ \Phi_i(\bx_i,\by_i,\xi)\right]$ for any $(\bx_i,\by_i) \in \scrX_i \times\scrY_i$.
\begin{assumption}\label{ass:standing}
The following assumptions hold for each leader $i\in\scrI$:
\begin{itemize}
    \item[{(i)}] The set $\scrX_{i}\subset\R^{n_{i}}$ is nonempty, compact, and convex. In particular, there exists $C_{i}>0$ such that $\sup_{\bx_{i},\bx'_{i}\in\scrX_{i}}\norm{\bx_{i}-\bx'_{i}}\leq C_{i}$ for all $i\in\scrI$.  
\item[{(ii)}] For some $\delta_{0}>0$, the mapping $\bx_{i}\mapsto g_{i}(\bx_{i},\by_{i}(\bx_{i}))$ is $L_{1,{i}}$-Lipschitz on $\scrX_{i,\delta_{0}}\eqdef\scrX_{i}+\delta_{0}\ball_{n_{i}}$.
\item[{(iii)}] $\bx_{i}\mapsto F_{i}((\bx_{i},\bx_{-i}),\xi)$ is convex and continuously differentiable over an open set containing $\scrX_{i}$, uniformly for all $\bx_{-i}\in\scrX_{-i}$ and almost every $\xi\in\Xi$. 
\item[{(iv)}] The mapping $\by_{i}\mapsto g_{i}(\bx_{i},\by_{i})$ is $L_{2,i}$-Lipschitz continuous for all $\bx_{i}\in\scrX_{i,\delta_{0}}$.
\item[{(v)}] The operator $V:\Rn\to\R^{n},$ defined by $V(\bx)=(V_{i}(\bx))_{i\in\scrI}$ and $V_{i}(\bx)\eqdef\nabla_{\bx_{i}}f_{i}(\bx)$, is $L_{f}$-Lipschitz continuous and monotone on $\scrX_{\delta_{0}}=\prod_{i\in\scrI}\scrX_{i,\delta_{0}}$.
\end{itemize}
\end{assumption}
Assumption \ref{ass:LLunique} is commonly employed in hierarchical optimization problems. Indeed, in the special case where the VI captures the optimality conditions of a parametrized convex optimization problem solved by the follower, then strong monotonicity of $\phi_i(\bx_{i},\cdot)$, is equivalent to strong convexity of the follower's cost function, an assumption that is known in bilevel optimization literature as the \emph{lower level uniqueness property} \cite{BilevelLearningIEEE}.\\

Given Assumption \ref{ass:LLunique} we define the implicit loss function $L_{i}:\scrX\to\R\cup\{\infty\}$ by
\begin{equation}\label{eq:L}
L_{i}(\bx_{i},\bx_{-i})\eqdef \ell_{i}(\bx_{i},\bx_{-i},\by_{i}(\bx_{i}))=\Ex_{\xi}[F_{i}((\bx_{i},\bx_{-i}),\xi)]+h_{i}(\bx_{i}), 
\end{equation}
where 
$
h_{i}(\bx_{i})\eqdef g_{i}(\bx_{i},\by_{i}(\bx_{i})).
$
In terms of the implicit loss function \eqref{eq:L}, we convert the hierarchical game \eqref{eq:Opt} into a \emph{stochastic Nash equilibrium problem} in which each player solves the loss minimization problem 
\begin{equation}\label{eq:upperlevel}
(\forall i\in\scrI):\quad \min_{\bx_{i}\in\scrX_{i}}L_{i}(\bx_{i},\bx_{-i}).
\end{equation}
We refer to \eqref{eq:upperlevel} as the \emph{upper-level problem}, and summarize it as the tuple $\scrG^{\text{upper}}\eqdef\{L_{i},\scrX_{i}\}_{i\in\scrI}$. Let $\nash(\scrG^{\text{upper}})$ denote the set of Nash equilibria of the game $\scrG^{\text{upper}}$ and $\scrY = \prod_{i\in\scrI}\scrY_{i}$.
\begin{definition}
    A $2N$-tuple $(\bx^{\ast},\by^{\ast})\in\scrX\times\scrY$ is called a \emph{hierarchical equilibrium} if $\bx^{\ast}\in\nash(\scrG^{\text{upper}})$ and, for all $i\in\scrI$, $\by_{i}^{\ast}=\by_{i}(\bx_{i}^{\ast})$ the unique solution of $\VI(\phi_{i}(\bx^{\ast}_{i},\cdot),\scrY_{i})$. 
\end{definition}
Typical online learning approaches in game theory employ stochastic
approximation (SA) for iteratively approaching a Nash equilibrium of the game
$\scrG^{\text{upper}}$. These iterative methods rely on the availability of a stochastic oracle revealing (noisy) first-order information about the operators involved (i.e. samples of pseudo-gradients of the objective  for each individual player). Such direct methods are complicated in hierarchical optimization since the required subgradient is an element of the subdifferential of the sum of two Lipschitz continuous functions. In the current setting, this task is even more complicated since the upper level objective is defined by a function which is available only in an implicit form, as it depends on the solution of the lower level problem $\by_{i}(\bx_{i})$, and another function given in terms of an expected value. Thus, even if a sum-rule for a subdifferential applies \cite{Cla98}, it would read as 
$
\partial_{\bx_{i}}L(\bx_{i},\bx_{-i})=\partial_{\bx_{i}}f_{i}(\bx)+\partial_{\bx_{i}}(g_{i}\circ(\Id,\by_{i}(\cdot)))(\bx_{i}).
$
Hence we would need to invoke a non-smooth chain rule for our chosen version of a subdifferential, in order to evaluate
$\partial_{\bx_{i}}(g_{i}\circ(\Id,\by_{i}(\cdot)))(\bx_{i})$. In addition, we would require access to the subdifferential of $\by_i(\bullet)$ at $\bx_i$.  We circumvent this
computationally challenging step by developing a random search procedure based on a finite difference approximation. To develop such scheme, recall that we defined
$h_{i}(\bx_{i})\eqdef g_{i}(\bx_{i},\by_{i}(\bx_{i}))$ as the loss function
coupling the leader and the follower. The following fact can be found in
Proposition 1 in \cite{Cui:2022aa}.
\begin{lemma}\label{lem:hisemismooth}
   Let Assumptions \ref{ass:LLunique}-\ref{ass:standing} hold. Then $h_{i}(\bx_{i})=g_{i}(\bx_{i},\by_{i}(\bx_{i}))$ is $L_{h_{i}}$-Lipschitz continuous on $\scrX_{i}$ and directionally differentiable.
\end{lemma}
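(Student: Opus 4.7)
The plan is to split the claim into its two parts, which are of rather different natures. The Lipschitz bound is essentially built into the hypotheses: Assumption \ref{ass:standing}(ii) already posits that $\bx_{i}\mapsto g_{i}(\bx_{i},\by_{i}(\bx_{i}))$ is $L_{1,i}$-Lipschitz on the enlarged neighborhood $\scrX_{i,\delta_{0}}\supseteq\scrX_{i}$. Restricting to $\scrX_{i}$ and setting $L_{h_{i}}\eqdef L_{1,i}$ yields the Lipschitz conclusion with no further work, so the real content of the lemma lies in the directional-differentiability claim.

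For the latter, I would view $h_{i}$ as the composition $\bx_{i}\mapsto(\bx_{i},\by_{i}(\bx_{i}))\mapsto g_{i}(\bx_{i},\by_{i}(\bx_{i}))$ and apply a two-step argument. First, I would establish directional differentiability of the lower-level solution map $\bx_{i}\mapsto \by_{i}(\bx_{i})$. This is a classical sensitivity statement for parametric strongly monotone VIs going back to Dafermos and systematized in \cite{FacPan03}: under Assumption \ref{ass:LLunique} the operator $\phi_{i}(\bx_{i},\cdot)$ is strongly monotone and Lipschitz, which together with a mild smoothness of $\phi_{i}$ in $\bx_{i}$ gives both Lipschitz continuity and directional differentiability of $\by_{i}(\cdot)$, with the directional derivative $\by_{i}'(\bx_{i};d)$ characterized as the unique solution of a linearized (derivative) VI over the critical cone at $\by_{i}(\bx_{i})$.

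Second, I would combine this with the coordinate-wise smoothness of $g_{i}$ provided by Assumption \ref{ass:standing}(ii),(iv) by way of the standard chain rule for directionally differentiable maps, yielding the explicit formula
\[
h_{i}'(\bx_{i};d)=\nabla_{\bx_{i}}g_{i}(\bx_{i},\by_{i}(\bx_{i}))^{\top}d+\nabla_{\by_{i}}g_{i}(\bx_{i},\by_{i}(\bx_{i}))^{\top}\by_{i}'(\bx_{i};d),
\]
valid for every $\bx_{i}\in\scrX_{i}$ and every admissible direction $d$.

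I expect the main obstacle to lie in the first step: making the sensitivity argument rigorous requires pinning down the precise regularity of $\phi_{i}$ in its parameter $\bx_{i}$ so that the derivative VI is well-posed and uniquely solvable, and in particular ruling out pathologies on the boundary of $\scrX_{i}$ where one-sided derivatives must be used. In the present paper this detailed analysis is bypassed by appealing directly to Proposition 1 of \cite{Cui:2022aa}, which already packages these ingredients; the proof therefore reduces to verifying that Assumptions \ref{ass:LLunique}--\ref{ass:standing} here imply the hypotheses of that proposition and invoking it, together with the observation above that the Lipschitz constant is simply $L_{1,i}$.
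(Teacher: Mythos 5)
Your proposal is, in substance, the same as the paper's treatment: the paper offers no independent argument for this lemma but simply invokes Proposition~1 of \cite{Cui:2022aa}, and your reduction to that citation—together with the observation that the Lipschitz claim is literally Assumption~\ref{ass:standing}(ii) restricted to $\scrX_{i}$, so one may take $L_{h_{i}}=L_{1,i}$—matches what the paper does. One caution on your intermediate sketch: the displayed chain-rule formula and the Dafermos-type sensitivity step require differentiability of $g_{i}$ in both arguments and smoothness of $\phi_{i}$ in the parameter $\bx_{i}$ (plus enough structure on $\scrY_{i}$ to make the derivative VI well-posed), none of which appear among Assumptions~\ref{ass:LLunique}--\ref{ass:standing}; as you yourself note, these missing ingredients are exactly what the appeal to the cited proposition is meant to package, so the sketch should be read as motivation rather than as a standalone proof under the stated hypotheses.
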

To proceed, we impose a convexity requirement on $h_i(\cdot,\by_i(\cdot))$.
\begin{assumption}\label{ass:implicit}
The implicitly defined function $\bx_{i}\mapsto h_{i}(\bx_{i})\eqdef g_{i}(\bx_{i},\by_{i}(\bx_{i}))$ is convex on $\scrX_{i}$.
\end{assumption}
\begin{remark}
Several papers in the literature provided conditions under which the implicit function $h_{i}$ is indeed convex in hierarchical settings. A structural model framework where convexity provably holds is described in \cite{doi:10.1137/120863873}, and \cite{DeMiguel:2009aa}. 
\end{remark}
\begin{remark}
 Under Assumptions \ref{ass:standing} and \ref{ass:implicit}, an equilibrium exists using classical results due to \cite{Glicksberg:1952aa}. Indeed, since the lower level problem is assumed to have a unique solution $\by_{i}(\bx_{i})$ and Lemma \ref{lem:hisemismooth} guarantees that the noncooperative game $\scrG^{\text{upper}}$ satisfies conditions of \cite[Th.~I.4.1]{Mertens:2015aa}, implying $\nash(\scrG^{\text{upper}})\neq\emptyset$. 
\end{remark}
Assumption~\ref{ass:standing}(v), Assumption \ref{ass:implicit} and Lemma~\ref{lem:hisemismooth}, yield a variational characterization of elements of $\nash(\scrG^{\text{upper}})$ in terms of an expectation-valued \emph{mixed-variational inequality} (cf. Appendix~\ref{app:VIs}). 
\begin{lemma}\label{lem:VI-hierarchical}
Let $h:\scrX\to\R$ be defined by $h(\bx)\eqdef\sum_{i\in\scrI}h_{i}(\bx_{i})$. Then, $\bx^{\ast}\in\nash(\scrG^{\text{upper}})$ if and only if $\bx^{\ast}$ solves the mixed variational inequality $\MVI(V,h)$:
\begin{equation}\label{eq:MVImain}
\inner{V(\bx^{\ast}),\bx-\bx^{\ast}}+h(\bx)-h(\bx^{\ast})\geq 0\qquad\forall \bx\in\scrX.
\end{equation}
\end{lemma}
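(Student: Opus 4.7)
The plan is to exploit the separability of the problem across players and the convexity structure to reduce the Nash equilibrium condition, player by player, to a first-order optimality condition of mixed variational inequality type, then aggregate these over $i \in \scrI$.

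First, I would fix $i \in \scrI$ and observe that the single-player problem $\min_{\bx_i \in \scrX_i} L_i(\bx_i,\bx_{-i}^{\ast})$ is a convex program. Indeed, Assumption~\ref{ass:standing}(iii) ensures $\bx_i \mapsto f_i(\bx_i,\bx_{-i}^{\ast})$ is convex and continuously differentiable on an open neighborhood of $\scrX_i$, while Assumption~\ref{ass:implicit} supplies convexity of $h_i$. Combining these, $\bx_i \mapsto L_i(\bx_i,\bx_{-i}^{\ast}) = f_i(\bx_i,\bx_{-i}^{\ast}) + h_i(\bx_i)$ is the sum of a smooth convex function and a Lipschitz (hence finite) convex function on the convex set $\scrX_i$. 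A standard characterization of minimizers for such composite convex problems then gives that $\bx_i^{\ast}$ is a best response if and only if
\begin{equation}\label{eq:proofplan-ibest}
\inner{\nabla_{\bx_i}f_i(\bx^{\ast}),\bx_i-\bx_i^{\ast}} + h_i(\bx_i) - h_i(\bx_i^{\ast}) \geq 0 \qquad \forall\, \bx_i \in \scrX_i.
\end{equation}
The sufficiency direction of~\eqref{eq:proofplan-ibest} follows from the subgradient inequality applied to the convex function $f_i(\cdot,\bx_{-i}^{\ast})$; necessity follows from the usual stationarity condition for a convex objective on a convex set, using differentiability in $\bx_i$ to replace $\subd_{\bx_i} f_i$ by $\{\nabla_{\bx_i} f_i\}$.

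Next, I would tie these $N$ player-wise inequalities together using the product and separable structure. For the ``$\Rightarrow$'' direction, suppose $\bx^{\ast} \in \nash(\scrG^{\text{upper}})$, so \eqref{eq:proofplan-ibest} holds for every $i$. Given any $\bx = (\bx_1,\ldots,\bx_N) \in \scrX = \prod_i \scrX_i$, sum \eqref{eq:proofplan-ibest} over $i$; recalling $V_i(\bx^{\ast}) = \nabla_{\bx_i} f_i(\bx^{\ast})$ and $h(\bx) = \sum_i h_i(\bx_i)$, this yields exactly \eqref{eq:MVImain}. For the ``$\Leftarrow$'' direction, suppose $\bx^{\ast}$ solves $\MVI(V,h)$. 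Fix $i$ and an arbitrary $\bx_i \in \scrX_i$, and plug the unilateral deviation $\bx = (\bx_i, \bx_{-i}^{\ast}) \in \scrX$ into \eqref{eq:MVImain}. Since $\bx - \bx^{\ast}$ has only its $i$th block nonzero, the inner product collapses to $\inner{V_i(\bx^{\ast}), \bx_i - \bx_i^{\ast}}$; and separability of $h$ gives $h(\bx) - h(\bx^{\ast}) = h_i(\bx_i) - h_i(\bx_i^{\ast})$. This recovers \eqref{eq:proofplan-ibest}, and by the equivalence established above, $\bx_i^{\ast}$ is a best response to $\bx_{-i}^{\ast}$. Since $i$ was arbitrary, $\bx^{\ast} \in \nash(\scrG^{\text{upper}})$.

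I do not anticipate a substantial obstacle: the only nontrivial inputs are (a) convexity of $h_i$, which is granted by Assumption~\ref{ass:implicit}, and (b) the existence of $\nabla_{\bx_i} f_i$ on a neighborhood of $\scrX_i$, granted by Assumption~\ref{ass:standing}(iii). The mildly delicate point to be explicit about is that $\bx_i^{\ast} \in \scrX_i$ lies in the interior of the enlargement $\scrX_{i,\delta_0}$ on which differentiability/Lipschitzness are asserted, so that the gradient $\nabla_{\bx_i} f_i(\bx^{\ast})$ and the subdifferential sum rule for $L_i(\cdot,\bx_{-i}^{\ast})$ are legitimate at $\bx^{\ast}$; this is routine given compactness of $\scrX_i$ and the enlargement by $\delta_0 \ball_{n_i}$ in Assumption~\ref{ass:standing}(ii),(v).
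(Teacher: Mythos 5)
Your argument is correct and is precisely the standard player-wise reduction (composite convex first-order optimality for each $\min_{\bx_i\in\scrX_i}f_i(\bx_i,\bx_{-i}^{\ast})+h_i(\bx_i)$, then summation for necessity and unilateral-deviation substitution with separability of $h$ for sufficiency) that the paper invokes implicitly, since it states Lemma~\ref{lem:VI-hierarchical} without proof as a consequence of Assumption~\ref{ass:standing}(v), Assumption~\ref{ass:implicit} and Lemma~\ref{lem:hisemismooth}. No gaps: your use of convexity of $h_i$, differentiability of $f_i(\cdot,\bx_{-i}^{\ast})$ near $\scrX_i$, and the block structure of $V$ and $h$ is exactly what is needed.
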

Solutions to mixed VIs with expectation-valued operators
have been developed recently in cases where the random variable takes values in
a finite set, and/or when the VI is derived from a zero-sum game displaying a
finite-sum structure \citep{Alacaoglu:2021aa,Alacaoglu:2022aa}. The standard
algorithmic approach to iteratively approximate a solution to such
structured VIs are extragradient type of methods. A direct application of these
methods to the mixed VI \eqref{eq:MVImain} is complicated because of the
following facts: 

\noindent (i) \em $L$-smoothness of $h_i$. The assumptions made thus far do not
guarantee the differentiability of $h_i$ with a Lipschitz continuous gradient.
Hence, a direct application of gradient, or extragradient methods, is a
difficult task in our setting. To cope with this technical difficulty, we
develop a smoothing approach, yielding a family of approximating models
enjoying the typical Lipschitz smoothness requirements.\\ 
(ii)  \em Randomness in the operator $V$: Since the operator $V$ is only available in
terms of an expected value, in general, we cannot tractably evaluate it.
Instead we have to use simulation-based methods to obtain random estimators of
this mathematical expectation. To keep this simulation task within a feasible
computational budget, iterative variance reduction ideas are frequently used in
iterative methods for generating the input data. Again, these standard variance
reduction techniques rely on smoothness of the data. In our case,
non-smoothness is present in terms of the implicit function $h_{i}(\cdot)$. In
principle, one could apply splitting techniques to deal with the non-smooth
function via a proximal smoothing.  However, this approach requires
$h_{i}(\cdot)$ to be proximable for which we have no a-priori guarantee since
it is the value function of the leader, derived
from the solution of the follower. With these preparatory remarks in mind, we now explain the design of our algorithmic solution strategy for the hierarchical game problems \eqref{eq:Opt}.

\section{A variance reduced forward-backward-forward algorithm for hierarchical games}
\label{sec:algorithm}
%

In this section, we present our algorithm for computing
an equilibrium of the hierarchical game~\eqref{eq:Opt}. As in the seminal
SVRG formulation, our method runs in two loops. Each loop requires as inputs data that are computed in the outer loop. The inputs of the inner and outer loops are
constructed as follows.  For  $\eta>0$, we define the (Tikhonov) regularized vector field $V^{\eta}:\scrX\to\R^{n}$ by
\begin{equation}\label{eq:Vreg}
    V^{\eta}(\bx)=(V_{i}^{\eta}(\bx))_{i\in\scrI},\mbox{ where } V^{\eta}_{i}(\bx)\eqdef V_{i}(\bx)+\eta \bx_{i}\quad\forall i\in\scrI. 
\end{equation}
Tikhonov regularization is a classical tool to obtain stronger convergence results in numerical schemes. It has been examined for deterministic \citep{kannan12distributed} and stochastic equilibrium problems \citep{koshal13regularized,YouNedSha17}.\\

Our next assumption is concerned with the nature of the stochastic oracle which generated random estimators on the expectation-valued operator $V$ when queried at a given point $\bx$.
\begin{assumption}\label{ass:SO_V}
The operator $V$ has a stochastic oracle $\hat{V}(\cdot,\xi)$ that is
\begin{enumerate}
\item unbiased: $V(\bx)=\Ex_{\xi}[\hat{V}(\bx,\xi)]$ for all $\bx\in\scrX$;
\item $\scrL_{f}(\xi)$-Lipschitz for almost every $\xi\in\Xi$: $\norm{\hat{V}(\bx',\xi)-\hat{V}(\bx,\xi)}\leq \scrL_{f}(\xi)\norm{\bx'-\bx}$ for all $\bx,\bx'\in\scrX$.
The random variable $\scrL_{f}(\xi)$ is positive and integrable with $\Ex_{\xi}[\scrL(\xi)]=L_{f}$. 
\end{enumerate}
\end{assumption} 
To retrieve in-play information about the value of the implicit loss function $h_{i}(\cdot)$, we employ a smoothing-based approach, which necessitates 
defining another sampling mechanism. We follow the gradient sampling strategy of
\cite{FlaKalMcM05}, though alternative random estimation strategies are
certainly possible (see, e.g.  \cite{Duvocelle:2022aa,Berahas:2022vu,Kozak:2022aa}). Specifically, given $\delta>0$, we denote the finite difference
approximation of the directional derivative of $h_{i}$ in direction $\bw_{i}\in\R^{n_{i}}$ as
$$
\nabla_{(\bw_{i},\delta)} h_{i}(\bx_{i})\, \eqdef\, \frac{h_{i}(\bx_{i}+\delta\bw_{i})-h_{i}(\bx)}{\delta}.
$$
Let $\bW_{i}$ be a random vector uniformly distributed on the unit sphere $\sphere_{i}\eqdef\{\bx_{i}\in\R^{n_{i}}\vert\norm{\bx_{i}}= 1\}$.\footnote{See Appendix \ref{sec:sampling} for the explicit construction of such an oracle.} We then define the random vector $H_{i,\bx_{i}}^{\delta}(\bW_{i})$ as a randomized and suitably rescaled version of the finite difference approximator reading as 
\begin{equation}\label{eq:estimator}
 H_{i,\bx_{i}}^{\delta}(\bW_{i})\eqdef n_{i}\bW_{i} \nabla_{(\bW_{i},\delta)} h_{i}(\bx_{i})\in\R^{n_{i}}.
\end{equation}
From eq. \eqref{eq:gradsmoothing} in App.~\ref{app:smoothing}, we know that $H^{\delta_{i}}_{i,\bx_{i}}(\cdot)$ is an unbiased estimator of the gradient of the smoothed function 
$$
h^{\delta}_{i}(\bx_{i})\eqdef \frac{1}{\Vol_{n}(\delta\ball_{n_{i}})} \int_{\delta \ball_{n_{i}}}h_{i}(\bx_{i}+\bf{u})\dd\bf{u},
$$
where $\ball_{d}\eqdef\{\bx\in\R^{d}\vert\norm{\bx}\leq 1\}$ for any dimension $d\geq 1$. Furthermore, we discuss in Appendix \ref{app:smoothing} that the function $h^{\delta}_{i}$ is continuously differentiable with gradient 
\[
\nabla h^{\delta}_{i}(\bx_{i})=\frac{n_{i}}{\delta}\Ex_{\bW_{i}\sim\Uniform(\sphere_{n_{i}})}[\bW_{i}\left( h_{i}(\bx_{i}+\delta\bW_{i})-h_{i}(\bx_{i})\right)]=\Ex_{\bW_{i}\sim\Uniform(\sphere_{n_{i}})}[H^{\delta}_{i,\bx_{i}}(\bW_{i})],
\]
and 
$$
\norm{\nabla h_{i}^{\delta}(\bx_{i})-\nabla h_{i}^{\delta}(\by_{i})}\leq \frac{L_{h_{i}}n_{i}}{\delta}\norm{\bx_{i}-\by_{i}},\qquad\forall \bx_{i},\by_{i}\in\R^{n_{i}},\delta>0
$$
where  $\bW_{i}\sim\Uniform(\sphere_{n_{i}})$ indicates that $\bW_i$ is uniformly distributed on the surface of a unit sphere $\sphere_{n_i}$.

\subsection{Iterative Regularization methods}
Lemma \ref{lem:VI-hierarchical} shows that the equilibria of our hierarchical game are entirely captured by the solution set of problem $\MVI(V,h)$. This is a rich class of
variational problems for which the number of
contributions is so numerous that we just point the reader to the monographs \cite{FacPan03} and \cite{Ryu:2022aa}. Deducing convergence results on the
\emph{last iterate} for standard algorithmic schemes is an important requirement for
game-theoretic learning algorithms, but typically is a rare commodity: Despite
some special classes of games \citep{Azizian:2021aa,Giannou:2021aa}, first-order
methods give only guarantees on a suitably constructed ergodic average. To
obtain last iterate convergence results, we develop an iterativeTikhonov regularization approach. This leads us to consider the regularized problem $\MVI(V^{\eta},h)$, which requires to find
$\bs(\eta)\in\scrX$ satisfying 
\begin{equation}\label{eq:MVI-Tik}
\inner{V^{\eta}(\bs(\eta)),\bx-\bs(\eta)}+h(\bx)-h(\bs(\eta))\geq 0\qquad\forall \bx\in\scrX. 
\end{equation}
Naturally, we would like to understand the nature of the accumulation points of the sequence $\{\bs_{t}\}_{t\in\N}$, where $\bs_{t}\equiv\bs(\eta_{t})$ and $\eta_{t}\downarrow 0$. This sequence can be studied in quite some detail, and we summarize some well-known facts in Proposition \ref{prop:Tikhonov} below. As those results are rather scattered in the literature, we provide a self-contained proof in Appendix \ref{app:Tik}. 
\begin{proposition}\label{prop:Tikhonov}
    Let Assumptions \ref{ass:LLunique}, \ref{ass:standing} and \ref{ass:implicit} hold true. Consider the problem $\MVI(V,h)$ with nonempty solution set $\SOL(V,h)$. Then the following apply:
\begin{enumerate}
\item[(a)] For all $\eta>0$, the set $\SOL(V^{\eta},h)$ is a singleton with unique element denoted by $\bs(\eta)$; 
\item[(b)] $(\forall\eta>0): \norm{\bs(\eta)}\leq \inf\{\norm{\bx}:\bx\in\SOL(V,h)\}$;
\item[(c)] Let $\{\eta_{t}\}_{t\in\N}$ be a positive sequence with $\eta_{t}\downarrow 0$. Then, the sequence $\{\bs(\eta_{t})\}_{t\in\N}$ converges to the least norm solution $\arg\min\{\norm{\bx}:\bx\in\SOL(V,h)\}$;
\item[(d)] For any positive sequence $\{\eta_{t}\}_{t\in\N}$ satisfying $\eta_{t}\downarrow 0$, we have 
\begin{equation}\label{eq:TikEvolv}
\left(\frac{\eta_{t}-\eta_{t-1}}{\eta_{t}}\right)\inf_{\bx\in\SOL(V,h)}\norm{\bx}\geq\norm{\bs(\eta_{t})-\bs(\eta_{t-1})}.
\end{equation}
\end{enumerate}
\end{proposition}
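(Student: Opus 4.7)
The plan is to prove the four claims by repeatedly exploiting the same elementary tool: writing the (mixed) VI characterizations of $\bs(\eta)$ and a suitable reference point, cross-substituting the test vectors and adding, so that convex terms in $h$ cancel and the monotone term can be discarded. For part (a), I would note that under Assumption~\ref{ass:standing}(v) the regularized operator $V^{\eta}=V+\eta\Id$ is $\eta$-strongly monotone and Lipschitz on $\scrX$. Since $\scrX$ is nonempty, compact, and convex (Assumption~\ref{ass:standing}(i)) and $h$ is continuous convex (Lemma~\ref{lem:hisemismooth} with Assumption~\ref{ass:implicit}), a standard existence result for mixed VIs (e.g.\ Brezis / Hartman--Stampacchia) yields a solution. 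Uniqueness follows from the cross-substitution trick applied to two putative solutions, which collapses after adding to $\eta\|\bs-\bs'\|^{2}\le 0$.

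For part (b), take any $\bx^{\ast}\in\SOL(V,h)$, test the $\MVI(V^{\eta},h)$ for $\bs(\eta)$ at $\bx=\bx^{\ast}$ and the $\MVI(V,h)$ for $\bx^{\ast}$ at $\bx=\bs(\eta)$, and add. The $h$-increments cancel telescopically, leaving
\begin{equation*}
\inner{V(\bs(\eta))-V(\bx^{\ast}),\bx^{\ast}-\bs(\eta)}+\eta\inner{\bs(\eta),\bx^{\ast}-\bs(\eta)}\ge 0.
\end{equation*}
Monotonicity of $V$ forces the first bracket to be non-positive, hence $\|\bs(\eta)\|^{2}\le\inner{\bs(\eta),\bx^{\ast}}\le\|\bs(\eta)\|\,\|\bx^{\ast}\|$. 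Dividing by $\|\bs(\eta)\|$ (trivial otherwise) and taking the infimum over $\bx^{\ast}\in\SOL(V,h)$ yields the bound.

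For part (c), (b) together with compactness of $\scrX$ makes $\{\bs(\eta_{t})\}$ precompact. Let $\bar{\bs}$ be any accumulation point along $\bs(\eta_{t_{k}})\to\bar{\bs}$. Passing to the limit in
\begin{equation*}
\inner{V(\bs(\eta_{t_{k}}))+\eta_{t_{k}}\bs(\eta_{t_{k}}),\bx-\bs(\eta_{t_{k}})}+h(\bx)-h(\bs(\eta_{t_{k}}))\ge 0
\end{equation*}
uses the continuity of $V$ (Lipschitz by Assumption~\ref{ass:standing}(v)), the continuity of $h$ (Lemma~\ref{lem:hisemismooth}), and $\eta_{t_{k}}\to 0$ to conclude $\bar{\bs}\in\SOL(V,h)$. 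Under our monotonicity assumptions, $\SOL(V,h)$ is closed and convex, so it admits a unique least-norm element $\bx^{\dagger}$. Inequality~(b) gives $\|\bar{\bs}\|\le\|\bx^{\dagger}\|$, while $\bar{\bs}\in\SOL(V,h)$ gives the reverse; hence $\bar{\bs}=\bx^{\dagger}$. Since every accumulation point coincides with $\bx^{\dagger}$, the whole sequence converges.

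For part (d), apply the cross-substitution once more to the two VIs for $\bs(\eta_{t})$ and $\bs(\eta_{t-1})$: after cancellation of $h$-terms and using monotonicity of $V$ on the $V$-difference, one is left with
\begin{equation*}
\eta_{t}\|\bs(\eta_{t})-\bs(\eta_{t-1})\|^{2}\le (\eta_{t-1}-\eta_{t})\inner{\bs(\eta_{t-1}),\bs(\eta_{t-1})-\bs(\eta_{t})}.
\end{equation*}
Cauchy--Schwarz followed by (b) then gives $\|\bs(\eta_{t})-\bs(\eta_{t-1})\|\le\frac{|\eta_{t-1}-\eta_{t}|}{\eta_{t}}\inf_{\bx\in\SOL(V,h)}\|\bx\|$, which is the claim (with the understanding that, since $\eta_{t}\downarrow 0$, the numerator in the stated formula should be read in absolute value). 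The main obstacle I anticipate is the passage to the limit in (c) — one must justify that Lipschitz continuity of $V$ and $h$, together with confinement of the entire trajectory to the fixed compact set $\scrX$, is enough to carry the mixed-VI inequality to the limit point; the remaining steps are essentially bookkeeping around the monotonicity trick used in (a), (b) and (d).
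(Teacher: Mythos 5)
Your proposal is correct and takes essentially the same route as the paper's proof in Appendix \ref{app:Tik}: your cross-substitution of test points is exactly the monotonicity pairing the paper performs via the subdifferential inclusions in Propositions \ref{prop:Tikhonovbounded}--\ref{prop:Tikconverge}, and your accumulation-point/least-norm argument for (c) matches the paper's, the only cosmetic difference being that you pass to the limit directly in the mixed-VI inequality instead of invoking closedness of $\gr(\partial r)$. The sign in your intermediate estimate for (d) is flipped relative to the exact computation (one obtains $\eta_{t}\norm{\bs(\eta_{t})-\bs(\eta_{t-1})}^{2}\leq(\eta_{t}-\eta_{t-1})\inner{\bs(\eta_{t-1}),\bs(\eta_{t-1})-\bs(\eta_{t})}$), but since you then apply Cauchy--Schwarz and read the final bound with $\abs{\eta_{t-1}-\eta_{t}}$, this is harmless and consistent with the paper's own sign convention in \eqref{eq:TikEvolv}.
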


\subsection{The Algorithm} 
Our algorithm for the hierarchical game
setting consists of a double-loop structure: The outer loop allows the
$N$  players to make multiple independent queries of the stochastic oracle
$\hat{V}_1(\cdot,\xi), \ldots, \hat{V}_N(\cdot,\xi),$ and draw multiple independent samples from the surface of a unit sphere, allowing for the simulation of the random estimator \eqref{eq:estimator}. However, since the multiple calls are a negative entry
on the oracle complexity of the method, we impose some control on the number of
mini-batches to be constructed by the agents. Within the inner-loop subroutine, the agents only receive single samples from their stochastic oracles, and employ this new information in an extragradient-type algorithm. We give a precise construction in the following paragraphs.

\subsubsection{The outer loop}
Let $t=0,\ldots,T-1$ be the iteration counter for the outer loop. We denote by $b_{t}\in\N$ the pre-defined sample rate defining the number of random variables each player is allowed to generate in round $t$. Specifically, each player generates an iid sample  $\xi^{1:b_{t}}_{i,t}\eqdef \{\xi_{i,t}^{(s)};1\leq s\leq b_{t}\}$ and constructs the mini-batch estimator
\begin{equation}\label{eq:Vregestimator}
\bar{V}^{t}_{i}\eqdef\frac{1}{b_{t}}\sum_{s=1}^{b_{t}}\hat{V}_{i}(\bx^{t},\xi_{i,t}^{(s)}),
\end{equation}
Let $\bar{V}^{t}=(\bar{V}_{1}^{t},\ldots,\bar{V}_{N}^{t})$. A similar 
assumption is made to obtain point-estimators of the gradient of the implicit
function $h_{i}(\cdot)$. Hence, 
$\bW^{1:b_{t}}_{i,t}\eqdef\{\bW^{(s)}_{i,t};1\leq s\leq b_{t}\}$ denotes an i.i.d.
sample of $b_{t}$ random vectors drawn uniformly at random from $\sphere_{i}$
and define the mini-batch estimator $H_{i,\bx_{i}}^{\delta_{t},b_{t}}$ as 
\begin{equation}\label{eq:batchestimator}
H_{i,\bx_{i}}^{\delta_{t},b_{t}}\eqdef \frac{1}{b_{t}}\sum_{s=1}^{b_{t}}H_{i,\bx_{i}}^{\delta_{t}}(\bW_{i,t}^{(s)}),\quad H^{\delta_{t},b_{t}}_{\bx^{t}}\eqdef(H^{\delta_{t},b_{t}}_{1,\bx^{t}_{1}},\ldots,H^{\delta_{t},b_{t}}_{N,\bx^{t}_{N}}),
\end{equation}
where $\delta_t$ denotes a positive smoothing parameter. Equipped with these estimators, each player enters the procedure $\SFBF(\bx^{t},\bar{V}^{t},H^{\delta_{t},b_{t}}_{\bx^{t}},\gamma_{t},\eta_{t},\delta_{t},K)$, that relies on 
steplength $\gamma_{t}$ and regularization parameter $\eta_{t}$, whose role is explained in the description of the inner loop.

\subsubsection{The inner loop}
 Given the inputs $(\bx^{t},\bar{V}^{t},H^{\delta_{t},b_{t}}_{\bx_{t}},\gamma_{t},\eta_{t})$ prepared in the outer loop, the inner loop of our method is based on a stochastic version of Tseng's modified extragradient method \cite{Tse00}, using one-shot estimators of the relevant data. To be precise, given the current iterate $\bx^{t}$, each player $i$ produces a trajectory $\{\bz^{(t)}_{i,k}\}_{k\in\{0,1/2,1,\ldots,K\}}$. These interim strategy profiles are updated recursively by the procedure $\SFBF(\bx^{t},\bar{V}^{t},H^{\delta_{t},b_{t}}_{\bx^{t}},\gamma_{t},\eta_{t},\delta_{t},K)$ described in Algorithm \ref{alg:SFBF}. Starting with the strategy profile $\bx^{t}$, we choose the initial conditions $\bz^{(t)}_{i,0}=\bx_{i}^{t}$ for all $i\in \scrI$. Then, for each $k\in\{0,1,\ldots,K-1\}$ each player queries the stochastic oracle to obtain the feedback signal
 \begin{equation}\label{eq:hatV}
 \hat{V}^{\eta_{t}}_{i,t,k+1/2}(\bz^{(t)}_{k+1/2})\eqdef\hat{V}_{i}(\bz^{(t)}_{k+1/2},\xi_{i,t,k+1/2})+\eta_{t}\bz_{i,k+1/2}^{(t)}.
 \end{equation}
 Similarly, each player obtains the random information $H^{\delta_{t}}_{\bz^{(t)}_{i,k+1/2}}(\bW_{i,t,k+1/2})$ and $H^{\delta_{t}}_{\bx^{t}_{i}}(\bW_{i,t,k+1/2})$, as defined in \eqref{eq:estimator}. These random variables are used to generate the parallel updates 
 \begin{align*}
& \bz^{(t)}_{i,k+1/2}=\Pi_{\scrX_{i}}[\bz^{(t)}_{i,k}-\gamma_{t}(\bar{V}^{t}_{i}+\eta_{t}\bx_{i}^{t}+H^{\delta_{t},b_{t}}_{\bx_{i}^{t}})],\text{ and }\\
&\bz^{(t)}_{i,k+1}=\bz^{(t)}_{i,k+1/2}-\gamma_{t}\left(\hat{V}^{\eta_{t}}_{i,t,k+1/2}(\bz^{(t)}_{k+1/2})+H^{\delta_{t}}_{\bz^{(t)}_{i,k+1/2}}(\bW_{i,t,k+1/2})-\hat{V}^{\eta_{t}}_{i,t,k+1/2}(\bx^{t})-H^{\delta_{t}}_{\bx^{t}_{i}}(\bW_{i,t,k+1/2})\right)
\end{align*}
for all $i\in\scrI$. These iterations correspond to a stochastic approximation variant of Tseng's forward-backward-forward method \cite{BotMerStaVu21} for solving the time-varying stochastic variational inequality 
$$
0\in V(\bar{\bx})+\nabla h^{\delta_{t}}(\bar{\bx})+\eta_{t}\bar{\bx}+\NC_{\scrX}(\bar{\bx}),
$$
characterized by Tikhonov regularization and smoothing of the implicit function $h_{i}$. 

\begin{algorithm}[t!]
\SetAlgoLined
\caption{$\SFBF(\bar{\bx},\bar{\bv},\bar{H},\gamma,\eta,\delta,K)$}
 \label{alg:SFBF}
\KwResult{Iterate $\bz^{K}$}
Set $\bz^{0}=\bar{\bx}$\;

\For{$k=0,1,\ldots,K-1$}{
  Update $\bz_{k+1/2}=\Pi_{\scrX}[\bz_{k}-\gamma(\bar{\bv}+\eta\bar{\bx}+\bar{H})]$\;
  
 Obtain $\hat{V}^{\eta}_{k+1/2}(\bz_{k+1/2})$ and $\hat{V}^{\eta}_{k+1/2}(\bar{\bx})$ as defined in eq. \eqref{eq:hatV}\;

Draw iid direction vectors $\bW_{k+1/2}=\{\bW_{i,k+1/2}\}_{i\in\scrI}$, with each $\bW_{i,k+1/2}\sim\Uniform(\sphere_{i})$. \; 
  
  Obtain $H^{\delta}_{\bz_{k+1/2}}(\bW_{k+1/2})$ and $H^{\delta}_{\bar{\bx}}(\bW_{k+1/2})$\;
  
  Update 
  $$
  \bz_{k+1}=\bz_{k+1/2}-\gamma\left(\hat{V}^{\eta}_{k+1/2}(\bz_{k+1/2})+H^{\delta}_{\bz_{k+1/2}}(\bW_{k+1/2})-\hat{V}^{\eta}_{k+1/2}(\bar{\bx})-H^{\delta}_{\bar{\bx}}(\bW_{k+1/2})\right).
  $$
  }
\end{algorithm}
\begin{algorithm}[t!]
\SetAlgoLined
\caption{Variance Reduced Hierarchical Game Solver  ($\VRHGS$) }
 \label{alg:VRHGS}
\KwData{$\bx,T,\{\gamma_{t}\}_{t=0}^{T},\{b_{t}\}_{t=0}^{T}$}
Set $\bx^{0}=\bx$.\\
\For{$t=0,1,\ldots,T-1$}{
For each $i\in\scrI$ receive the oracle feedback $\bar{V}^{t}$ defined by $\bar{V}^{t}_{i}\eqdef\frac{1}{b_{t}}\sum_{s=1}^{b_{t}}\hat{V}_{i}(\bx^{t},\xi_{i,t}^{(s)})$.
\; 

For each $i\in\scrI$ construct the estimator $H^{\delta_{t},b_{t}}_{\bx^{t}}$ defined by $H_{i,\bx_{i}}^{\delta_{t},b_{t}}\eqdef \frac{1}{b_{t}}\sum_{s=1}^{b_{t}}H_{i,\bx_{i}}^{\delta_{t}}(\bW_{i,t}^{(s)})$.
\;

Update $\bx^{t+1}=\SFBF(\bx^{t},\bar{V}^{t},H^{\delta_{t},b_{t}}_{\bx^{t}},\gamma_{t},\eta_{t},\delta_{t},K)$.
  }
\end{algorithm}
\paragraph{Discussion}
The key innovation of the scheme $\VRHGS$ lies in the combination of smoothing (to allow for hierarchy), regularization (to contend with ill-posedness), and variance-reduction (to mitigate bias) within a stochastic forward-backward-forward framework. Our double-loop solution strategy mimics the computational architecture of SVRG, which takes a full gradient sample of the finite sum problem ``once in a while", while performing frequent single-sample updates in between. Our method, adapted to general probability spaces, proceeds similarly: The ``shadow sequence" $\bz^{(t)}_{i,k+1/2}$ uses costly mini-batch estimators computed in the outer loop; these are maintained in memory while executing the inner loop (i.e. only ``once in a while" updated). The additional forward steps to obtain the iterates $\bz^{(t)}_{i,k+1}$ make use of fresh one-shot estimators of the payoff gradient and the finite difference estimator. All steps are overlaid by a Tikhonov regularization, while smoothing facilitates accommodation with hierarchical objectives. From a computational perspective, our scheme performs a single projection onto the leaders' feasible set $\scrX_{i}$. This can save considerably on computational time in cases where the projection operator is costly to evaluate, and constitutes a major difference compared to viable alternative algorithmic schemes like the extragradient or optimistic mirror descent. Hence, our method reduces the sample-complexity of recent mini-batch variance reduction techniques for stochastic VIs \citep{BotMerStaVu21}, while concomitantly reducing the computational bottlenecks of double-call algorithms \citep{IusJofOliTho17,KanSha19} by lifting one projection step. Finally, similar rate and complexity statements emerge when allowing for inexact generalizations that allow for $\epsilon$-approximate solutions of lower-level problem. 

\section{Main Results}
\label{sec:analysis}
%

In this section we state the main results on the asymptotic convergence of scheme $\VRHGS$. All technical and lengthy proofs are collected in Section \ref{sec:proofs}.

In the inner and outer loops of $\VRHGS$, we have two sources of randomness at
each iteration: (i) the sequence of  mini-batches
$\xi^{1:b_{t}}_{t}\eqdef\{\xi^{1:b_{t}}_{i,t}\}_{i\in\scrI}$ and
$\bW^{1:b_{t}}_{t}\eqdef\{\bW_{i,t}^{1:b_{t}}\}_{i\in\scrI}$, which are used to
perform the opening forward-backward step in Algorithm \ref{alg:SFBF}; (ii) the sequences $\xi_{t,k+1/2}=\{\xi_{i,t,k+1/2}\}_{i\in\scrI}$ and
$\bW_{t,k+1/2}=\{\bW_{i,t,k+1/2}\}_{i\in\scrI}$ for $k\in\{0,1,\ldots,K-1\}$,
which are employed in constructing the iterate $\bz^{(t)}_{k+1}$ in Algorithm \ref{alg:SFBF} during the outer epoch $t$.
To keep track of the
information structure of the outer and inner loops, we introduce the filtrations
$\scrF_{t}\eqdef \sigma(\bx^{0},\ldots,\bx^{t})$ for $0\leq t\leq T$, and
$\scrA_{t,0}\eqdef \sigma(\bx^{t},\xi_{t}^{1:b_{t}},\bW_{t}^{1:b_{t}})$ as
well as
$\scrA_{t,k}\eqdef\sigma(\scrA_{t,0}\cup\sigma(\xi_{t,1/2},\bW_{t,1/2},\ldots,\xi_{t,k-1/2},\bW_{t,k-1/2}))$
for $k\in\{1,2,\ldots,K-1\}$. By construction, the iterates $\bz^{(t)}_{k}$ and
$\bz^{(t)}_{k+1/2}$ are both $\scrA_{t,k}$-measurable.

\subsection{Error Structure of the estimators}
We impose a uniform variance bound on the random vector field $\hat{V}$ over the set $\scrX$. Compactness of $\scrX$ implies that such an assumption comes without loss of generality, and the proof of the variance bound in Lemma \ref{lem:Variance} is simple to obtain and thus omitted; See \cite{BotMerStaVu21}.
\begin{lemma}
\label{lem:Variance}
There exists $M_{V}>0$ such that $\Ex_{\xi}[\norm{\hat{V}(\bx,\xi)-V(\bx)}^{2}]\leq M^{2}_{V}$ for all $\bx\in\scrX$. Additionally, let $b_{t}\geq 1$ and $\xi^{1:b_{t}}_{t}=\{\{\xi^{(1)}_{i,t}\}_{i\in\scrI},\ldots,\{\xi^{(b_{t})}_{i,t}\}_{i\in\scrI}\}$ denote an i.i.d sample of the random variable $\xi$. Then, for $\eps_{V}(\xi^{1:b_{t}}_{t})\eqdef \frac{1}{b_{t}}\sum_{s=1}^{b_{t}}\hat{V}(\bx,\xi^{(s)}_{t})-V(\bx)$, we have 
$
\sqrt{\Ex\left[\norm{\eps_{V}(\xi^{1:b_{t}}_{t})}^{2}\right]}\leq \frac{M_{V}}{\sqrt{b_{t}}}.
$
\end{lemma}
Concerning the estimator of the gradient of the smoothed lower level function $h^{\delta}(\cdot)$, we can report the following bounds, which are derived in Appendix \ref{sec:sampling} and proved in Lemma \ref{lem:boundH}.
\begin{lemma}\label{lem:boundHindividual}
 Let Assumptions \ref{ass:LLunique}-\ref{ass:standing} hold. Define $\ce_{\bx_{i}}(\bW_{i}^{1:b})\eqdef H^{\delta,b}_{i,\bx_{i}}-\nabla h^{\delta}_{i}(\bx_{i})$, where $\{\bW^{(s)}_{i}\}_{s=1}^{b_{t}}$ is an i.i.d sample drawn uniformly from the unit sphere $\sphere_{i}$, i.e. $\bW^{1:b}_{i}\sim\Uniform(\sphere_{i})^{\otimes b}$. Then for all $i\in\scrI$,  
\begin{itemize}
\item[(a)] $\Ex_{\bW^{1:b}_{i}\sim\Uniform(\sphere_{i})^{\otimes b}}[\ce_{\bx_{i}}(\bW^{1:b}_{i})]=0$;
\item[(b)] $\norm{H_{i,\bx_{i}}^{\delta}(\bw_{i})}^{2}\leq L_{h_{i}}^{2}n^{2}_{i}$ for all $\bw_{i}\in\sphere_{i}$;
\item[(c)] $\Ex_{\bW^{1:b}_{i}\sim\Uniform(\sphere_{i})^{\otimes b}}[\norm{\ce_{\bx_{i}}(\bW^{1:b}_{i})}^{2}]\leq \frac{n^{2}_{i}L^{2}_{h_{i}}}{b_{i}}.$
\end{itemize}
\end{lemma}

\subsection{Almost sure convergence of the last iterate}

Our analysis of $\VRHGS$ relies on the following energy inequality, proved in Section \ref{app:energy}. 
\begin{lemma}\label{lem:energy}
    Let Assumptions \ref{ass:LLunique},\ref{ass:standing},\ref{ass:implicit} and \ref{ass:SO_V} hold true. Let $\{\bx^{t}\}_{t=0}^{T-1}$ be generated by $\VRHGS$ and denote by $\{\bz^{(t)}_{k}\}_{k\in\{0,1/2,\ldots,K\}}$ the sequence obtained by executing $\SFBF(\bx^{t},\bar{V}^{t},H^{\delta_{t},b_{t}}_{\bx^{t}},\gamma_{t},\eta_{t},\delta_{t},K)$. Set $L_{h}\eqdef\sum_{i\in\scrI}L_{h_{i}}$.
    Then for all $\bx\in\scrX$  and $k\in\{0,1,\ldots,K-1\}$, we have 
\begin{align*}
\norm{\bz^{(t)}_{k+1}-\bx}^{2}&\leq(1-\gamma_{t}\eta_{t})\norm{\bz^{(t)}_{k}-\bx}^{2}-(1-2\gamma_{t}\eta_{t})\norm{\bz^{(t)}_{k+1/2}-\bz^{(t)}_{k}}^{2}\\
&+8\gamma_{t}^{2}\sum_{i\in\scrI}L_{h_{i}}^{2}n^{2}_{i}+4\gamma^{2}_{t}(\scrL_{f}(\xi_{t,k+1/2})^{2}+\eta^{2}_{0})\norm{\bz^{(t)}_{k+1/2}-\bx^{t}}^{2}\\
&-2\gamma_{t}\inner{\hat{V}_{t,k+1/2}(\bz^{(t)}_{k+1/2})+H^{\delta_{t}}_{\bz^{(t)}_{k+1/2}}(\bW_{t,k+1/2})-V(\bz^{(t)}_{k+1/2})-\nabla h^{\delta_{t}}(\bz^{(t)}_{k+1/2}),\bz^{(t)}_{k+1/2}-\bx}\\
&-2\gamma_{t}\inner{\left(V(\bx^{t})+\nabla h^{\delta_{t}}(\bx^{t})\right)-\left(\hat{V}_{t,k+1/2}(\bx^{t})+H^{\delta_{t}}_{\bx^{t}}(\bW_{t,k+1/2})\right),\bz^{(t)}_{k+1/2}-\bx}\\
&-2\gamma_{t}\inner{\eps^{t}_{V}(\xi^{1:b_{t}}_{t})+\eps^{t}_{h}(\bW^{1:b_{t}}_{t}),\bz^{(t)}_{k+1/2}-\bx}\\
&-2\gamma_{t}\left(\inner{V^{\eta_{t}}(\bx),\bz^{(t)}_{k+1/2}-\bx}+h(\bz^{(t)}_{k+1/2})-h(\bx)\right)+2\gamma_{t}\delta_{t}L_{h}.
\end{align*}
\end{lemma}
We next prove a.s. convergence of $\{\bx^{t}\}_{t=0}^{T}$ to the least-norm solution of $\MVI(V,h)$ as $T\to\infty$. The proof rests on a fine comparison between the algorithmic sequence $\{\bx^{t}\}$ and the sequence of solutions of the regularized problems $\MVI(V^{\eta_{t}},h)$, denoted as $\{\bs_{t}\}_{t=0}^{T}$. 
\begin{theorem}\label{th:converge}
Let Assumptions \ref{ass:LLunique},\ref{ass:standing},\ref{ass:implicit}, and  \ref{ass:SO_V} hold. Suppose we are given sequences $\{\gamma_{t}\}_{t\in\N},\{\delta_{t}\}_{t\in\N}$ and $\{\eta_{t}\}_{t\in\N}$, satisfying the following conditions:
\begin{itemize}
\item[(a)] $\lim_{t\to\infty}\frac{\gamma_{t}}{\eta_{t}}=\lim_{t\to\infty}\frac{\delta_{t}}{\eta_{t}}=0$, and $\sum_{t=0}^{\infty}\gamma^{2}_{t}<\infty,\sum_{t=0}^{\infty}\gamma_{t}\eta_{t}=\infty$;
\item[(b)] $\gamma_{t}\eta_{t}\in(0,1/2)$ and $\lim_{t\to\infty}\eta_{t}=0$;
\item[(c)] $\sum_{t=0}^{\infty}\left(\frac{\eta_{t}-\eta_{t-1}}{\eta_{t}}\right)^{2}(1+\frac{1}{\gamma_{t}\eta_{t}})<\infty$ and $\lim_{t\to\infty}\left(\frac{\eta_{t}-\eta_{t-1}}{\eta_{t}}\right)^{2}\left(\frac{1+\frac{1}{\gamma_{t}\eta_{t}}}{\gamma_{t}\eta_{t}}\right)=0$.
\end{itemize}
Then $\Pr(\lim_{t\to\infty}\norm{\bx^{t}-\bx^{\ast}}=0)=1$, where $\bx^{\ast}$ denotes the unique least norm solution of $\MVI(V,h)$; i.e.  $\{(\bx^{t},\by(\bx^{t}))\}_{t\in\N}$ converges almost surely to a hierarchical equilibrium of the game \eqref{eq:Opt}. 
\end{theorem}
The proof of this Theorem can be found in Section \ref{app:proofTheoremConvergence}.
\subsection{Finite-time complexity}
The convergence measure usually employed for MVI$(V,h)$ is the gap function 
\begin{equation}\label{eq:gap}
\Gamma(\bx)\eqdef\sup_{\bz\in\scrX}\left(\inner{V(\bz),\bx-\bz}+h(\bx)-h(\bz)\right),
\end{equation}
Since we work in probabilistic setting, naturally our convergence measure will be based on $\Ex[\Gamma(\bx)]$. Our main finite-time iteration complexity result in terms of this performance measure is the next Theorem, whose proof is detailed in Section \ref{app:proofofComplexity}.
\begin{theorem}\label{th:gap}
Let Assumptions \ref{ass:LLunique},\ref{ass:standing},\ref{ass:implicit}, and  \ref{ass:SO_V} hold and fix $T\in\N$. Consider Algorithm $\VRHGS$ with the inputs $\gamma_{t}=\eta_{t}=\delta_{t}=1/T$, as well as $K=T$ and batch size $b_{t}\geq T^{2}$. Then, 
$
\Ex[\Gamma(\bar{\bz}^{T})]=\scrO\left(\frac{C\sigma}{T}\right),
$
where $\sigma\eqdef\sqrt{2M_{V}^{2}+2\sum_{i\in\scrI}L^{2}_{h_{i}}n^{2}_{i}},C=\max_{i\in\scrI}C_{i}$ (cf. Assumption (\ref{ass:standing}.i)), and 
$\bar{\bz}^{T}\eqdef\frac{\sum_{t=0}^{T-1}\gamma_{t}\bar{\bz}^{t}}{\sum_{t=0}^{T-1}\gamma_{t}}$ for $\bar{\bz}^{t}\eqdef\frac{1}{K}\sum_{k=0}^{K-1}\bz^{(t)}_{k+1/2}$.
\end{theorem}
We next evaluate the oracle complexity of $\VRHGS$. To be precise, let $\OC(T,K,\{b_{t}\}_{t=0}^{T-1})$ the number of random variables method $\VRHGS$ generates in the inner and outer loop until we achieve a solution that pushes the expected gap below a target value $\eps$.
\begin{remark}
We point out that this measure of oracle complexity ignores the computational effort arising from solving the lower level problem attached with player $i\in\scrI$. This is consistent as we assume that the solution map is provided to us in terms of an oracle. A full-fledged complexity analysis can be done, and will appear in a future publication. 
\end{remark}
\begin{proposition}\label{prop:OOC}
Let $\eps>0$ be given, and set $T=\lceil1/\eps\rceil$. If we choose the same sequences as in Theorem \ref{th:gap}, we have 
$
\OC(T,K,\{b_{t}\}_{t=0}^{T-1})=\scrO(2N/\eps^{3}).
$
\end{proposition}
\begin{proof}
The number of random variables generated in each inner loop iteration is $2K\times N$. In each round of the outer loop we sample $2b_{t}\times N$ random variables. Hence, the total oracle complexity is $\OC(T,K,\{b_{t}\}_{t=0}^{T})=2KTN+2N\sum_{t=0}^{T-1}b_{t}.$ For the specific values of $T,K,\gamma_{t},\delta_{t},\eta_{t}$ defined in Theorem \ref{th:gap} and $b_{t}=T^{2}$, it clearly follows $\OC(T,K,\{b_{t}\}_{t=0}^{T-1})=\scrO(2N/\eps^{3}).$
\end{proof}

\subsection{Inexact generalization}
A key shortcoming in the implementation of $\VRHGS$ is the need for exact solutions of the lower-level problem. Naturally, when the solution map $\by_{i}(\cdot)$ corresponds to the solution of a large-scale stochastic optimization/VI problem, this claim is hard to justify. In this section, we allow for an inexact solution $\by_{i}^{\eps}(\bx)$ associated with an error level $\eps$, defined as 
\begin{equation}
\Ex[\norm{\by^{\eps}(\bx)-\by(\bx)}\vert\bx]\leq\eps\quad \text{ a.s.}
\end{equation}
Under the inexact lower level solution $\by_{i}^{\eps}$, we let $h_{i}^{\eps}(\bx_{i})\eqdef g_{i}(\bx_{i},\by_{i}^{\eps}(\bx_{i}))$.
\begin{remark}\label{rem:inexactsolver}
We can obtain the inexact solution $\by_{i}^{\eps}(\bx_{i})$ with rather efficient numerical methods. First, we can parallelize the computation since the problems $\VI(\phi_{i}(\bx_{i},\cdot),\scrY_{i})$ are uncoupled. Second, the mapping $\phi_{i}(\bx_{i},\cdot)$ is assumed to be strongly monotone. Hence, we can solve the VI to $\eps$-accuracy with exponential rate using for instance the method in \cite{Cui:2022ab}.
\end{remark}
 As in the exact regime, we assume that player $i$ has access to an oracle with which she can construct a spherical approximation of the gradient of the implicit function $h_{i}^{\eps}$. Hence, for given $\delta>0$, we let 
$
h^{\eps,\delta}_{i}(\bx_{i})\eqdef\int_{\ball_{n_{i}}}h^{\eps}_{i}(\bx_{i}+\delta\bw)\frac{\dd\bw}{\Vol_{n}(\ball_{n_{i}})}.
$
We denote the resulting estimators by $H^{\delta,\eps}_{i,\bx_{i}}(\bW_{i})=n_{i}\bW_{i}\nabla_{(\bW_{i},\delta)}h^{\eps}_{i}(i,\bx_{i})$, and the mini-batch versions $H^{\delta,\eps,b}_{i,\bx_{i}}\eqdef \frac{1}{b}\sum_{s=1}^{b}H^{\delta,\eps}_{i,\bx_{i}}(\bW_{i}^{(s)}).$ With these concepts in hand, we can adapt $\VRHGS$ to run exactly the same way as described in Algorithm \ref{alg:SFBF} and Algorithm \ref{alg:VRHGS}, replacing the appearance of quantities involving $h_{i}$ with its inexact version $h^{\eps}_{i}$; see Section \ref{sec:analysisinexact} for a precise formulation of the method.
\begin{theorem}\label{th:inexact}
Let Assumptions \ref{ass:standing} hold and fix $T\in\N$. Consider Algorithm I-$\VRHGS$, defined in Section \ref{sec:analysisinexact}, with the sequence $\gamma_{t}=\eta_{t}=\delta_{t}=1/T$, as well as $K=T$, the batch size $b_{t}\geq T^{2}$ and inexactness regime $\eps_{t}=1/T^{2}$. Then
$
\Ex[\Gamma(\bar{\bz}^{T})]=\scrO\left(\frac{C\sigma}{T}\right),
$
 where $\sigma\eqdef\sqrt{2M_{V}^{2}+2\sum_{i\in\scrI}L^{2}_{h_{i}}n^{2}_{i}},C=\max_{i\in\scrI}C_{i}$ (cf. Assumption (\ref{ass:standing}.i)).
\end{theorem}

Notably, tractable resolution of the proposed stochastic hierarchical game is possible in inexact regimes and such practically motivated schemes are not adversely affected in terms of either the rate or complexity guarantees.

\section{Hierarchical games in power markets}
\label{sec:application}
In this section we present a  model inspired by Hobbs and
Pang~\cite{hobbs07nash}, but suitably modified to account for uncertainty in
prices and costs, multi-period settings with ramping constraints, and the
incorporation of virtual power plants (VPPs)
(see~\cite{NAVAL2021111393,DOE_VPPLiftOff_2023} for a review of VPPs and power
markets). The model we present below is at this stage an academic example that
demonstrates the modelling power of our hierarchical games approach. In future
studies we aim for numerical implementations of this model. \\

Consider a set of nodes $\Nscr$ of a network and a set of time periods $\Tscr
\, \triangleq \, \left\{\, 1, 2, \cdots, T\, \right\}$. A generation
firm is indexed by $f$, where $f$ belongs to the finite set $\Fscr$ and each
firm is assumed to have an associated VPP.  At a node $i$ in the network, a firm $f$ may
generate $g_{f,i,t}$ units via conventional generation in period $t$ and sell
$s_{f,i,t}$ units  during the same period. In addition, at time period $t$, firm
$f$ may generate $P^{\rm pv,S}_{f,t}+P^{\rm pv,L}_{f,t}$ units
of power via PV capacity, of which  $P^{\rm pv,S}_{f,t}$ is sold and
$P^{\rm pv,L}_{f,t}$ is employed for meeting load.  The total amount of
power sold at node $i$ during period $t$ by all generating firms is represented by $S_{i,t}$, i.e. $S_{i,t}={\displaystyle \sum_{f\in\Fscr}}s_{f,i,t}$.
If the nodal power price at the $i$th node during period $t$ is a random
function given by $p_{i,t}(\bullet,\xi)$, where $p_{i,t}(\bullet,\xi)$ is a
decreasing function of aggregate nodal sales $S_{i,t}$ for any $\xi \in \Xi$. It follows that firm
$f$'s revenue from non-PV power sales at node $i$ during period $t$  under realization $\xi$ is
$p_{i,t}(S_{i,t},\xi) s_{f,i}$. Sales of PV output by firm $f$ at time $t$ is priced
using a function $p^{\rm R}_{f,t}$, earning a revenue given by $p_{f,t}^{\rm
R}\left( P_{f,t}^{\rm pv,S} + P_{f,t}^{\rm pv,S,V} \right)
P^{\rm pv,S}$, where $P_{f,t}^{\rm pv,S,V}$ denotes the sales of
firm $f$'s associated VPP (whose problem is described later in this section).
We observe that renewable power is priced using this price function, distinct
from conventional sources, and is designed to provide incentives for renewable
expansion~\cite{rui2023assessing}.  The costs incurred by firm $f$ at node $i$
during period $t$ are given by the sum of the cost of generating $g_{f,i,t}$
and the cost of transmitting the excess $(s_{f,i,t} -g_{f,i,t})$.  Let the
random cost function of generation associated with firm $f$ at node $i$ be
given by $c_{fi}(\bullet,\zeta)$ while the cost of transmitting power from an
arbitrary node (referred to as the hub) to node $i$ is given by $w_i$. The
constraint set incorporates a balance between aggregate sales, aggregate
generation, and power injection into the VPP  at all nodes for every time
period $t$. In addition, we impose nonnegativity bound on sales and generation
at any time period $t$, enforce a  capacity limit on generation levels, and
introduce ramping constraints on the change in generation levels.   The
resulting problem faced by generating firm $f$, denoted by ({\bf Firm$_f$}),
requires minimizing generation cost  less revenue from conventional and PV
sales  by optimizing sales $s_{f,i,t}$ and generation $g_{f,i,t}$ at every
node $i$ and every time period $t$ as well as load-directed PV output $P^{\rm
pv,L}_{f,t}$ and PV sales $P^{\rm pv,S}_{f,t}$ at time $t$. If $P_{f,t}^{\rm
pv,S,V,\epsilon}(\cdot)$ denotes a component of the single-valued solution map of
the $\epsilon$-regularized problem of the VPP associated with firm $f$, denoted
by  ({\bf VPP}$_f(P_f^{\rm pv,S})$), then firm $f$'s problem is defined as
follows, where $\widehat{\Tscr} = \{1, \cdots, T-1\}$.
\[ 
\begin{array}{ll} 
\displaystyle{
    {\operatornamewithlimits{ \mbox{\bf maximize}}_{s_{f,i,t}, \, g_{f,i,t}, \, P^{\rm pv,S}_{f,t}\, P^{\rm pv,L}_{f,t}}}
} & \mathbb{E}\left[\displaystyle{
    \sum_{t = 1}^T \sum_{i\in \Nscr}
} \, \left( \, p_{i,t} (S_{i,t}, \xi)s_{f,i,t} - c_{fi}(g_{fi,t},\zeta)
        -(s_{fi,t}-g_{fi,t})w_{i,t} \, \right)\right]\\
    &  + {\displaystyle \sum_{t \in \Tscr}}\left( p_{f,t}^{\rm R}\left( P_{f,t}^{\rm pv,S} + P_{f,t}^{\rm pv,S,V,\epsilon}(P_{f,t}^{\rm pv,S}) \right) P^{\rm pv,S} \right)   \\ [0.2in]
\mbox{\bf subject to} &
\left\{ \begin{array}{lll}
0 & \leq & g_{f,i,t} \, \leq \, {\rm cap}_{fi} \\ [5pt]
0 & \leq & s_{f,i,t} \end{array} \right\}, \hspace{0.9in} \forall \, t \in \Tscr, \quad \forall i \in \Nscr \hspace{0.7in} \left( \mbox{\bf Firm}_f(s_{-f},g_{-f})\right)\\ [0.2in]
    &  -\mbox{RR}^{\rm down}_{f,i} \, \leq \, g_{f,i,t} - g_{f,i,t-1} \, \leq \, \mbox{RR}^{\rm up}_{fi}, \quad \forall \, t \in \widehat{\Tscr}, \quad \forall i \in \Nscr \\ [0.2in]
  &  P^{\rm pv,L}_{f,t}+ P^{\rm pv,S}_{f,t} \, \leq \, {\rm cap}^{\rm pv}_{f,t}, \ \qquad \qquad \qquad \forall t \in \Tscr \\ 
 & 0 \, \leq \,   P^{\rm pv,L}_{f,t}, P^{\rm pv,S}_{f,t}, \qquad \qquad \qquad \qquad \quad \forall t \in \Tscr \\[5pt]
\mbox{and} & \displaystyle{
\sum_{i \in \Nscr}
    } \, ( \, s_{f,i,t} - g_{f,i,t} \, ) - P^{\rm pv,L}_{f,t} \, = \, 0.  \ \ \quad \qquad \forall t \in\ \Tscr 
\end{array}
\]
It bears reminding that the last set of constraints specified in $\left( \mbox{\bf Firm}_f(s_{-f},g_{-f})\right)$ are parametrized by rival decisions and can be relaxed with Lagrange multiplier
$\lambda_{f,t}$, leading to the following {\em relaxed} problem $\left(
\mbox{\bf Firm}^{\rm rel}_f(s_{-f},g_{-f})\right)$, defined as follows.
\[ 
\begin{array}{ll} 
\displaystyle{
    {\operatornamewithlimits{ \mbox{\bf maximize}}_{s_{f,i,t}, \, g_{f,i,t}, \, P^{\rm pv,S}_{f,t}\, P^{\rm pv,L}_{f,t}}}
} & \mathbb{E}\left[\displaystyle{
    \sum_{t = 1}^T \sum_{i\in \Nscr}
} \, \left( \, p_{i,t} (S_{i,t}, \xi)s_{f,i,t} - c_{f,i}(g_{f,i,t},\zeta)
        -(s_{f,i,t}-g_{f,i,t})w_{i,t} \, \right)\right]  \,  \\
    &\hspace{-1.2in}  + {\displaystyle \sum_{t \in \Tscr}}\left( p_{f,t}^{\rm R}\left( P_{f,t}^{\rm pv,S} + P_{f,t}^{\rm pv,S,V} \right) P_{f,t}^{\rm pv,S} \right)   - {\displaystyle \sum_{t=1}^T} \lambda_{f,t}^\top\left({\displaystyle \sum_{i \in \Nscr} }\left( (\, s_{f,i,t} - g_{f,i,t} \, ) - P_{f,t}^{\rm pv,L}\right)\right)  \\ [0.2in]
      \mbox{\bf subject to} &
\left\{ \begin{array}{lll}
0 & \leq & g_{fi,t} \, \leq \, {\rm cap}_{fi} \\ [5pt]
0 & \leq & s_{fi,t} \end{array} \right\}, \hspace{0.9in} \forall \, t \in \Tscr, \quad \forall i \in \Nscr \hspace{0.7in} \left( \mbox{\bf Firm}_f(s_{-f},g_{-f})\right)\\ [0.2in]
    &  -\mbox{RR}^{\rm down}_{f,i} \, \leq \, g_{f,i,t} - g_{f,i,t-1} \, \leq \, \mbox{RR}^{\rm up}_{f,i}, \quad \forall \, t \in \widehat{\Tscr}, \quad \forall i \in \Nscr \\ [0.2in]
  &  P^{\rm pv,L}_{f,t}+ P^{\rm pv,S}_{f,t} \, \leq \, {\rm cap}^{\rm pv}_{f,t}, \  \qquad \qquad \qquad\forall t \in \Tscr \\ 
 & 0 \, \leq \,   P^{\rm pv,L}_{f,t}, P^{\rm pv,S}_{f,t}, \qquad \qquad \qquad \qquad \quad \forall t \in \Tscr \end{array}
\]
In addition, we introduce a pricing player $\left(\mbox{\bf Price}\left(s_{f}, g_{f}, P_{f,t}^{\rm pv,L}\right)\right)$ corresponding to the determination of $\lambda_{f,t}$ for $f \in \Fscr$ and  $t \, \in \, \Tscr$, defined as follows.
\[
\begin{array}{ll} 
\displaystyle{
    {\operatornamewithlimits{ \mbox{\bf minimize}}_{\lambda}}
} &  {\displaystyle \sum_{f \in \Fscr} \sum_{t \in \Tscr}} \lambda_{f,t}^\top\left({\displaystyle \sum_{i \in \Nscr}
    } \,  ( \, s_{f,i,t} - g_{f,i,t} \, ) + P_{f,t}^{\rm pv,L}\right). \hspace{1in} \left(\mbox{\bf Price}\left(s, g, P^{\rm pv,L}\right)\right)
\end{array}
\]
Note that, the generating firm sees the transmission fee $w_{i,t}$ and the rival
firms' sales $s_{-fi,t} \equiv \{s_{hi,t}~:~ h \neq f\}$ as exogenous parameters to
its optimization problem even though they are endogenous to the overall
equilibrium model as we will see shortly.  The ISO sees the transmission fees
$w=(w_{i,t})_{i \in \Nscr, t \in \Tscr}$ as exogenous and prescribes flows $y = (y_{i,t})_{i \in
\Nscr, t \in \Tscr}$ as per a solution of the following linear program 
\[ \begin{array}{ll} %
\displaystyle{
    {\operatornamewithlimits{\mbox{\bf maximize}}_{y}} 
} & \displaystyle{
    \sum_{i \in \Nscr} \sum_{t \in \Tscr}
} \, y_{i,t} w_{i,t} \\ [0.2in]
\mbox{\bf subject to} & \displaystyle{
\sum_{i \in \Nscr}
    } \, {\rm PDF}_{ij} y_{i,t} \, \leq \, \hat{T}_j, \qquad \forall \, j \in \Kscr, \ \forall \, t \in \Tscr,\hspace{1.3in} \left( \mbox{\bf ISO}(w)\right)
\end{array} \]
where $\Kscr$ is the set of all arcs or links in the network with node set $\Nscr$, $\hat{T}_j$ denotes the transmission capacity of link $j$, $y_{i,t}$ represents the transfer of power (in MW) by the system operator from a hub node to node node $i$ and  PDF$_{ij}$
denotes the power transfer distribution factor,
which specifies the MW flow through link $j$ as a consequence of
unit MW injection at an arbitrary hub node and a unit withdrawal at node $i$.
Finally,  to clear the market, the transmission flows $y_i$ must must balance the net sales at each node, as specified next.
\begin{align}\label{eq:mc_power}
    y_{i,t} \, = \, \displaystyle{
\sum_{f \in \Fscr}
} \, \left( \, s_{f,i,t} - g_{f,i,t} \, \right), \qquad \forall \, i \in \Nscr, \quad \forall \ t \in \Tscr.
\end{align}
    In fact, this constraint can be recast as a collection of pricing players, denoted by ({\bf Flow}$^{\rm price}(g,s,y)$).
\[ \begin{array}{ll} %
\displaystyle{
    {\operatornamewithlimits{\mbox{\bf minimize}}_{\beta}} 
} &  
    {\displaystyle \sum_{t \in \Tscr} \sum_{i \in \Nscr}} \beta_{i,t} \left(\, y_{i,t} \, - \, \displaystyle{
\sum_{f \in \Fscr}
    } \, \left( \, s_{f,i,t} - g_{f,i,t} \, \right)\, \right) \hspace{2in} (\mbox{\bf Flow}^{\rm price}(g, s, y)) 
\end{array}
    \]
We now extend the scope of the framework of power markets by incorporating virtual power
    plants.  A virtual power plant (VPP) represents a collections of
    distributed energy resources (DERs) (e.g., batteries, smart thermostats,
    controllable water heaters, and rooftop solar) that can be coordinated to
    enhance the reliability and sustainability of the electric grid. To satisfy
    the short-term goals for clean energy technology (CET) deployment, it has
    been estimated that U.S. VPP capacity must triple by 2030, leading to
    potential savings of \$10 billion in annual grid
    costs~\cite{DOE_VPPLiftOff_2023}.   Without loss of generality, we assume that any firm $f \, \in \, \Fscr$
    has a collection of components, which collectively provide ``virtual
    power'' in addition to conventional generation. Before proceeding, we
    model three components in such a VPP, akin to approaches employed in~\cite{jin2017foresee,garifi18stochastic,castillo2019stochastic}.

\medskip

\noindent (a) {\em Battery storage.} Suppose the storage unit associated with firm $f$ has an associated state of charge (SOC) level at time $t$ by SOC$_{f,t}$.
\begin{align}
    \mbox{SOC}_{f,t+1} \, = \, \mbox{SOC}_{f,t}\, + \, \tfrac{\eta^{\rm b,ch}_{f}\Delta t}{Q^{\mathrm{b}}_f}P^{\rm b, ch}_{f,t} \, - \, \tfrac{\Delta t}{\eta^{\rm b,ds}_{f}Q^{\mathrm{b}}_f}P^{\rm b, ds}_{f,t}, \ \forall \, t  \in  \Tscr \label{cons1} 
\end{align}
where $P^{\rm b,ch}_{f,t}$ and  $P^{\rm b,ds}_{f,t}$ represent charging and
    discharging power-levels at time $t$, $\eta^{\rm b,ch}_{f}$ and $\eta^{\rm
    b,ds}_{f}$ represent charging and discharging efficiencies at time $t$,
    while $Q^{\rm b}_f$ and $\Delta t$ denote the battery capacity and time
    interval, respectively. In addition, SOC$_{f,t}$ is bounded between a
    minimum value $\mbox{SOC}^{\rm \min}_f$ and maximum value $\mbox{SOC}^{\rm \max}_f$ while at any time $t$, charging and discharging rates cannot 
    exceed $P^{\rm b,ch,mx}_f$ and $P^{\rm b,ds,mx}_f$, respectively, as captured by the following bounds. 
\begin{align}
    \mbox{SOC}^{\rm \min}_f \, & \leq \, \mbox{SOC}_{f,t} \, \leq \, \mbox{SOC}^{\rm \max}_f, \qquad \forall t  \in  \Tscr  \label{cons2} \\
    0 \, & \le \, \tfrac{P^{\rm b,ch}_{f,t}}{P^{\rm b,ch,mx}_f}\, \le \, 1,\qquad \qquad \quad \forall t  \in  \Tscr  \label{cons3} \\  
    0 \, & \le \, \tfrac{P^{\rm b,ch}_{f,t}}{P^{\rm b,ds,mx}_f} \,  \, \le \, 1. \qquad \qquad \quad \forall t  \in  \Tscr   \label{cons4}
\end{align}

\medskip

    \noindent (b) {\em Intermittent resources.} We now model intermittency by considering a photovoltaic (PV) array associated with firm $f$, where at time $t$, $P^{\rm pv, L,V}_{f,t}$ and $P^{\rm pv,S,V}_{f,t}$ denote the PV output employed for meeting load and for deriving sales revenue, respectively. Further, $P^{\rm
pv,max}_{f}$ represents maximum PV power at time $t$. Consequently,  PV output is modeled as
\begin{align}
    P^{\rm pv,L,V}_{f,t} + P^{\rm pv,S,V}_{f,t} \, & = \, (1-U^{\rm pv}_{f}) P^{\rm pv,max}_{f,t} E_{f,t}\qquad  \qquad \forall t  \in  \Tscr  \label{cons5} \\
    0 \, & \leq \, U_{f,t}^{\rm pv}\, \leq \, 1, \qquad \qquad \qquad \qquad  \, \forall  t  \in  \Tscr  \label{cons6}\\
    0 \, & \leq \, P^{\rm pv, L,V}, P^{\rm pv,S,V} \qquad   \, \forall  t  \in  \Tscr \label{cons6b} 
\end{align}
    where $U^{\rm pv}_{f,t}$ denotes the PV curtailment employed by firm $f$ at time $t$ while $P^{\rm pv,max}_{f,t}$ scales with the solar irradiance at time $t$ as seen by firm $f$, denoted by $E_{f,t}$. We observe that $P^{\rm pv,S,V}_{f,t}, P^{\rm pv,L,V}_{f,t} \, \geq \, 0$ for any $f \, \in \, \Fscr$ and any $t \, \in \, \Tscr.$ 

\medskip

\noindent (c) {\em Thermal onsite generation.} Often VPPs may incorporate
onsite thermal generation that can be employed. For any $f \, \in \, \Fscr$,
suppose the generation capacity is denoted by $\mbox{Cap}^{\rm onsite}_f$ while
the upward and downward ramping rates are given by $\mbox{RR}^{\rm up}_{f}$ and
$\mbox{RR}^{\rm down}_f$, respectively. Consequently, if the generation output
at time $t$ is denoted by $P^{\rm onsite}_{f,t}$, then for any $t \, \in \, \Tscr$, we have 
    \begin{align}
        0 \, \leq \, P^{\rm onsite}_{f,t} \, \leq \, \mbox{Cap}^{\rm onsite}_f, \qquad \forall \, t \, \in \, \Tscr \label{cons7}
    \end{align}
    Furthermore, changes in generation level are bounded by ramping rates, as captured by the following set of two-sided constraints. 
    \begin{align}
        - \mbox{RR}^{\rm down}_f \, \leq \, P^{\rm onsite}_{f,t+1} - P^{\rm onsite}_{f,t+1} \, \leq \, \mbox{RR}^{\rm up}_f, \quad   
        \forall \, t \, \in \, \Tscr \label{cons8}
    \end{align}

 VPPs are characterized by an idiosyncratic load profile that cannot be
 controlled; specifically,  $P^{\rm L,V}_{f,t}$ denotes the load associated
 with VPP $f$ at time $t$. In more comprehensive models, we may incorporate
 HVAC and water heater components that allow for more fine-grained control of
 such loads but for purposes of simplicity, we omit such a discussion here. In
 the current setting,  the effective load
 emerging from managing the VPP associated with firm $f$ and time $t$ is
 given by the sum of the uncontrollable load and the battery load (charging
 less discharging level) less the sum of onsite generation and load-directed  PV output is required to be nonpositive, as specified next.   
\begin{align}
    P^{\rm L,V}_{f,t} + \left(\, P^{\rm b,ch}_{f,t} - P^{\rm b,ds}_{f,t}\, \right)  - P_{f,t}^{\rm onsite}- P^{\rm pv, L,V}_{f,t} \, \le \, 0,\qquad \forall \, t \, \in \, \Tscr \label{cons9} 
\end{align}
Note that the satisfaction of this constraint relies on appropriate sizing of the battery capacity $Q_f^b$ and the onsite generation capacity $\mbox{Cap}_f^{\rm onsite}$. Suppose the decision vector of firm $f$'s VPP is denoted by $y^{\rm vpp}_f$, defined as 
$$y_f^{\rm vpp} \, = \, \left( \, \mbox{SOC}_{f}; P^{\rm b, ch}_{f}; P^{\rm b, ds}_{f};P^{\rm pv,L,V}_{f}; P^{\rm pv,S,V}_{f}; U_{f}^{\rm pv}; P^{\rm onsite}_f \, \right).$$ 
The profit function associated with firm $f$'s VPP is the revenue obtained by sales revenue derived from PV sales less the VPP's operational cost  (given by the sum of the costs of onsite generation and the (converted) cost of PV curtailment), defined as
\begin{align} \notag
    \mbox{r}^{\rm vpp}_f (y^{\rm vpp}_f;P^{\rm pv,S}_f) \,  \triangleq \, \displaystyle{ \sum_{t \in \Tscr }} &  \, \left(\, \underbrace{p_{f,t}^{\rm R} \left(P^{\rm pv,S,V}_{f,t}+P^{\rm pv,S}_{f,t}\right) P^{\rm pv, S,V}_{f,t}}_{\tiny \mbox{VPP revenue from PV sales}}   -  \underbrace{c_{f}^{\rm onsite}(P^{\rm onsite}_{f,t})}_{\tiny \mbox{Cost of onsite gen.}} -  \underbrace{\beta  U^{\rm pv}_{f,t} P^{\rm max}_{f,t}}_{\tiny \mbox{Env. cost of PV curtailment}} \, \right), 
\end{align}
where $p_{f,t}^{\rm R}(\cdot)$ denotes the price function  of renewables seen at firm $f$ at time $t$, while the revenue obtained is given by $p_{f,t}^{\rm R} \left(P^{\rm pv, S,V}_{f,t}+P^{\rm pv,S}_{f,t}\right) P^{\rm pv,S,V}_{f,t}$.  We may then formally define the optimization problem faced by the VPP associated with firm $f$, where the polyhedral constraints are captured by $\left\{ \, y^{\rm vpp}_f \, \mid \, A_f y^{\rm vpp}_f  \le d_f \, \right\}$ where $A_f \in \Real^{m \times n}$ and $d_f \in \Real^m$. 
\[ \begin{array}{ll} %
\displaystyle{
    {\operatornamewithlimits{\mbox{\bf maximize}}_{y^{\rm vpp}_{f}}} 
    } & r^{\rm vpp}_f (y^{\rm vpp}_f; P_f^{\rm pv, S}) \hspace{2.3in} \left( \mbox{\bf VPP}_f(P_f^{\rm pv,S})\right)\\ [0.2in]
    \mbox{\bf subject to} & \eqref{cons1} \, - \, \eqref{cons9} \equiv \left\{ \, y^{\rm vpp}_f \, \mid \, A_f y^{\rm vpp}_f  \le d_f \, \right\} . 
\end{array} \]
We now observe that the resulting equilibrium problem comprises of a collection
of firms, each of which has a single follower as captured by a VPP, in addition
to the ISO and a set of players that determine prices. To facilitate analysis
of the necessary and sufficient equilibrium conditions of this hierarchical
game, we approximate $\left( \mbox{\bf VPP}_f(P_f^{\rm pv,S})\right)$
by employing a smooth (exact) penalized approximation; this latter formulation
is of particular relevance in deriving the concavity of the function
$p_{f,t}^{\rm R}\left( P_{f,t}^{\rm pv,S} + P_{f,t}^{\rm
pv,S,V,\epsilon} (P_{f,t}^{\rm pv,S}) \right) P_{f,t}^{\rm
pv,S}$ in $P_{f,t}^{\rm pv,S}$, where $P_{f,t}^{\rm
pv,S,V,\epsilon} (P_{f,t}^{\rm pv,S})$ is a component of the
single-valued solution map  $y^{\rm vpp,\epsilon}_f(P_{f,t}^{\rm
pv,S})$, a solution of the $\epsilon$-regularized and the $\epsilon$-smoothed
(exact) penalized approximation of $\left( \mbox{\bf VPP}_f(P_f^{\rm pv,S
})\right)$. To this end, we define the
exact penalty function $\varphi$ 
and its smoothed counterpart $\varphi_{\epsilon}$ 
as  
\begin{align}
    \varphi(A_fy^{\rm vpp}_f  - d_f) \, &\triangleq \, \sum_{i=1}^m  \max\{a_{fi}^\top y^{\rm vpp}_f  - d_{fi},0\}, \quad
    \varphi_{\epsilon}(A_fy^{\rm vpp}_f  - d_f) \, \triangleq \, \sum_{i=1}^m  \psi_{\epsilon}(a_{fi}^\top y^{\rm vpp}_f  - d_{fi}), \\
    \mbox{ and } \psi_{\epsilon}(t) & \triangleq \begin{cases} 0, & \mbox{ if } t \, \le \, 0\\
                            \tfrac{\mu t^2}{2\epsilon}, & \mbox{ if } 0 \, \le \, t \, \le \, \epsilon \\
                            \mu(t-\tfrac{\epsilon}{2}). & \mbox{ if } t \, \ge \, \epsilon
                        \end{cases}
\end{align}
%
This penalty function and its smoothed counterpart are employed in formally defining the exact penalty reformulation of (VPP$_f$) and its smoothed counterpart. 

\begin{align}\notag
    & \ \left\{ \begin{aligned} 
        \max_{y^{\rm vpp}_f} & \ r^{\rm vpp}(y^{\rm vpp}_f, P_f^{\rm pv,S}) \\
        \mbox{subject to} & \ A y^{\rm vpp}_f  \, \leq \, d_f  
    \end{aligned} \right\}   \ \equiv \, 
\left\{ \begin{aligned} 
    \max_{y^{\rm vpp}_f} & \ \left(r^{\rm vpp}(y^{\rm vpp}_f, P_f^{\rm pv,S}) + \mu \varphi(Ay^{\rm vpp}_f - d_f)\right)
    \end{aligned} \right\} \\
    & \hspace{0.2in}\ \approx \, \tag{\bf VPP$_f^\epsilon(P^{\rm pv,S}_f)$}
\left\{ \begin{aligned} 
    \max_{y^{\rm vpp}_f} & \left(r^{\rm vpp}(y^{\rm vpp}_f, P_f^{\rm pv,S}) + \mu \varphi_{\epsilon}(Ay^{\rm vpp}_f  - d_f) + \tfrac{\epsilon}{2} \| y^{\rm vpp}_f\|^2\right). 
    \end{aligned} \right\} 
\end{align}
%
%

    We observe that the resulting game can be viewed as a noncooperative
hierarchical game, defined by upper-level player problems given by firm players
$\left( \mbox{\bf Firm}^{\rm rel}_f(s_{-f},g_{-f},\lambda_f)\right)_{f \, \in
\, \Fscr}$, pricing players $\left(\mbox{\bf Price}\left(s, g, P^{\rm
pv,firm-load}\right)\right)$, the ISO as denoted by ({\bf ISO}), and the
transmission pricing player $(\mbox{\bf Flow}^{\rm price}(g, s, y))$. In
addition, the set of regularized lower-level VPP problems is given by
$\left(\mbox{\bf VPP$_f^\epsilon(P^{\rm pv,S}_f)$} \right)_{f \in
\Fscr}$. We succinctly represent this noncooperative game as an $N+2$ player
game, in which the first $N$  players correspond to firm $f$'s problem for $f
\in \Fscr$ while the last two correspond to pricing players. 
\begin{align} &
    \notag
\left\{\begin{aligned}
        \min_{\bz^1 \in \bZ^1} \, \left(h_1(\bz^1, \bz^{-1})+ g_1(\bz^1,\bu^1(\bz^1))\right)
\end{aligned} \right\} \quad
    \cdots\quad
\left\{\begin{aligned}
        \min_{\bz^{N} \in \bZ^N} \, \left(h_{N}(\bz^{N}, \bz^{-N})+ g_{N}(\bz^{N},\bu^{N}(\bz^{N}))\right)
\end{aligned}\right\},\\
    & \tag{Hier-game$^{\rm vpp}$}
\left\{\begin{aligned}
    \min_{\bz^{N+1} \in \bZ^{N+1}} \, \left(h_{N+1}(\bz^{N+1}, \bz^{-(N+1)})\right)
\end{aligned}\right\} \quad
\left\{\begin{aligned}
    \min_{\bz^{N+2} \in \bZ^{N+2}} \, \left(h_{N+2}(\bz^{N+2}, \bz^{-(N+2)})\right)
\end{aligned}\right\}.
\end{align}
Note that the first $N$ players' objectives are characterized by hierarchical
terms; specifically, the hierarchical terms  $g_j(\bz^j,\bu^j(\bz^j))$ for any
$j \, \in \{1,\ldots, N\}$  correspond to the hierarchical terms in firm $f$'s
problem given by ${\displaystyle \sum_{t \in \Tscr}}\left( p_{f,t}^{\rm
R}\left( P_{f,t}^{\rm pv,S} + P_{f,t}^{\rm pv,S,V} \right)
P_{f,t}^{\rm pv,S} \right)$ for any $f \, \in \, \Fscr$.

\medskip
 While convexity of player problems follows in a straightforward fashion from the definition of firm problems and suitable convexity requirements on the cost functions as well as affineness requirements on the price functions. Single-valuedness of the solution map $\bu^j(\bullet)$ follows from the observation that the regularized VPP profit function is strongly concave. Additionally, convexity of $g_j(\bullet,\bu^j(\bullet))$ is a consequence of analogous analysis for Stackelberg leadership (cf.~\cite{xu2005mpcc,DeMiguel:2009aa}). Finally,  monotonicity of $F$ can be derived in a fashion similar to that considered in~\cite{hobbs01linear}. Existence of an equilibrium can then be derived in a fashion similar to that employed in ~\cite{DeMiguel:2009aa}. A comprehensive analysis of this model is left to future work.
\section{Proof of the Main Theorem}
\label{sec:proofs}
\subsection{Analysis of the exact scheme}
\label{sec:exact}
%
The proof on the finite time-complexity estimate starts by a technical derivation of an energy-type inequality that gives us an upper bound on the change of the energy function $\frac{1}{2}\norm{\bz^{(t)}_{k+1}-\bx}^{2}$, computed within an arbitrary inner loop evaluation, and for an arbitrary anchor point $\bx\in\scrX$. Via a sequence of tedious, but otherwise straightforward, manipulations we arrive out our first main result, Lemma \ref{lem:energy}. From there, we proceed as in the standard analysis of stochastic approximation schemes \citep{NJLS09}, and derive an upper bound on the gap function of the mixed variational inequality. 

\subsubsection{Proof of Lemma \ref{lem:energy}}
\label{app:energy}
To simplify notation we omit the dependence on the outer iteration loop $t$, and thus simply write $\bz_{k}$ for $\bz^{(t)}_{k}$. The same notational simplification will be used in all variables that are computed within the inner loop executed in the $t$-round of the outer loop procedure. With the hope that the reader agrees that this reduces notational complexity a bit, we proceed to derive the the postulated energy inequality. To start, we observe that for each $\bx\in\scrX$ we have
\begin{align*}
\norm{\bz_{k+1}-\bx}^{2}&=\norm{\bz_{k+1}-\bz_{k+1/2}+\bz_{k+1/2}-\bz_{k}+\bz_{k}-\bx}^{2}\\
&=\norm{\bz_{k+1}-\bz_{k+1/2}}^{2}-\norm{\bz_{k+1/2}-\bz_{k}}^{2}+\norm{\bz_{k}-\bx}^{2}+2\inner{\bz_{k+1}-\bz_{k+1/2},\bz_{k+1/2}-\bx}\\
&+2\inner{\bz_{k+1/2}-\bz_{k},\bz_{k+1/2}-\bx}\\
&=\norm{\bz_{k}-\bx}^{2}-\norm{\bz_{k+1/2}-\bz_{k}}^{2}\\
&+\norm{\gamma_{t}\left(\hat{V}^{\eta_{t}}_{k+1/2}(\bz_{k+1/2})-\hat{V}^{\eta_{t}}_{k+1/2}(\bx^{t})+H^{\delta_{t}}_{\bz_{k+1/2}}(\bW_{k+1/2})-H^{\delta_{t}}_{\bx^{t}}(\bW_{k+1/2})\right)}^{2}\\
&-2\gamma_{t}\inner{\hat{V}^{\eta_{t}}_{k+1/2}(\bz_{k+1/2})-\hat{V}^{\eta_{t}}_{k+1/2}(\bx^{t})+H^{\delta_{t}}_{\bz_{k+1/2}}(\bW_{k+1/2})-H^{\delta_{t}}_{\bx^{t}}(\bW_{k+1/2}),\bz_{k+1/2}-\bx}\\
&+2\inner{\bz_{k+1/2}-\left(\bz_{k}-\gamma_{t}(\bar{V}^{t}+H^{\delta_{t},b_{t}}_{\bx^{t}}+\eta_{t}\bx^{t})\right),\bz_{k+1/2}-\bx}\\
&-2\gamma_{t}\inner{\bar{V}^{t}+H^{\delta_{t},b_{t}}_{\bx^{t}}+\eta_{t}\bx^{t},\bz_{k+1/2}-\bx}
\end{align*}
Lemma \ref{lem:projector}(i) gives 
$
2\inner{\bz_{k+1/2}-\left(\bz_{k}-\gamma_{t}(\bar{V}^{t}+H^{\delta_{t},b_{t}}_{\bx^{t}}+\eta_{t}\bx^{t}\right),\bz_{k+1/2}-\bx}\leq 0.
$
Additionally, Assumption \ref{ass:SO_V} and Lemma \ref{lem:boundH}(b) gives
\begin{align*}
&\norm{\gamma_{t}\left(\hat{V}^{\eta_{t}}_{k+1/2}(\bz_{k+1/2})-\hat{V}^{\eta_{t}}_{k+1/2}(\bx^{t})+H^{\delta_{t}}_{\bz_{k+1/2}}(\bW_{k+1/2})-H^{\delta_{t}}_{\bx^{t}}(\bW_{k+1/2})\right)}^{2}\\
&\leq 2\gamma^{2}_{t}\norm{\hat{V}^{\eta_{t}}_{k+1/2}(\bz_{k+1/2})-\hat{V}^{\eta_{t}}_{k+1/2}(\bx^{t})}^{2}+2\gamma^{2}_{t}\norm{H^{\delta_{t}}_{\bz_{k+1/2}}(\bW_{k+1/2})-H^{\delta_{t}}_{\bx^{t}}(\bW_{k+1/2})}^{2}\\
&\leq 4\gamma^{2}_{t}\norm{\hat{V}_{k+1/2}(\bz_{k+1/2})-\hat{V}_{k+1/2}(\bx^{t})}^{2}+4\eta^{2}_{t}\gamma^{2}_{t}\norm{\bz_{k+1/2}-\bx^{t}}^{2}\\
&+4\gamma^{2}_{t}\left(\norm{H^{\delta_{t}}_{\bz_{k+1/2}}(\bW_{k+1/2})}^{2}+\norm{H^{\delta_{t}}_{\bx^{t}}(\bW_{k+1/2})}^{2}\right)\\
&\leq 4\gamma^{2}_{t}\left(\scrL_{f}(\xi_{k+1/2})^{2}+\eta^{2}_{t}\right)\norm{\bz_{k+1/2}-\bx^{t}}^{2}+8\gamma^{2}_{t}\sum_{i\in\scrI}L_{h_{i}}^{2}n_{i}^{2}.
\end{align*}
It follows 
\begin{align*}
\norm{\bz_{k+1}-\bx}^{2}&\leq \norm{\bz_{k}-\bx}^{2}-\norm{\bz_{k+1/2}-\bz_{k}}^{2}+8\gamma_{t}^{2}\sum_{i\in\scrI}L_{h_{i}}^{2}n^{2}_{i}\\
&+4\gamma^{2}_{t}(\scrL_{f}(\xi_{k+1/2})^{2}+\eta^{2}_{0})\norm{\bz_{k+1/2}-\bx^{t}}^{2}\\
&-2\gamma_{t}\inner{\hat{V}^{\eta_{t}}_{k+1/2}(\bz_{k+1/2})+H^{\delta_{t}}_{\bz_{k+1/2}}(\bW_{k+1/2})-\hat{V}^{\eta_{t}}_{k+1/2}(\bx^{t})-H^{\delta_{t}}_{\bx^{t}}(\bW_{k+1/2}),z_{k+1/2}-\bx}\\
&-2\gamma_{t}\inner{\bar{V}^{t}+\eta_{t}\bx^{t}+H^{\delta_{t},b_{t}}_{\bx^{t}},\bz_{k+1/2}-\bx}\\
&=\norm{\bz_{k}-\bx}^{2}-\norm{\bz_{k+1/2}-\bz_{k}}^{2}+8\gamma_{t}^{2}\sum_{i\in\scrI}L_{h_{i}}^{2}n^{2}_{i}+4\gamma^{2}_{t}(\scrL_{f}(\xi_{k+1/2})^{2}+\eta^{2}_{0})\norm{\bz_{k+1/2}-\bx^{t}}^{2}\\
&-2\gamma_{t}\inner{\hat{V}_{k+1/2}(\bz_{k+1/2})+H^{\delta_{t}}_{\bz_{k+1/2}}(\bW_{k+1/2})-V(\bz_{k+1/2})-\nabla h^{\delta_{t}}(\bz_{k+1/2}),\bz_{k+1/2}-\bx}\\
&+2\gamma_{t}\inner{\hat{V}_{k+1/2}(\bx^{t})+H^{\delta_{t}}_{\bx^{t}}(\bW_{k+1/2})-\bar{V}^{t}-H^{\delta_{t},b_{t}}_{\bx^{t}},\bz_{k+1/2}-\bx}\\
&-2\gamma_{t}\inner{V(\bz_{k+1/2})+\nabla h^{\delta_{t}}(\bz_{k+1/2})+\eta_{t}\bz_{k+1/2},\bz_{k+1/2}-\bx}\\
&=\norm{\bz_{k}-\bx}^{2}-\norm{\bz_{k+1/2}-\bz_{k}}^{2}+8\gamma_{t}^{2}\sum_{i\in\scrI}L_{h_{i}}^{2}n^{2}_{i}+4\gamma^{2}_{t}(\scrL_{f}(\xi_{k+1/2})^{2}+\eta^{2}_{0})\norm{\bz_{k+1/2}-\bx^{t}}^{2}\\
&-2\gamma_{t}\inner{\hat{V}_{k+1/2}(\bz_{k+1/2})+H^{\delta_{t}}_{\bz_{k+1/2}}(\bW_{k+1/2})-V(\bz_{k+1/2})-\nabla h^{\delta_{t}}(\bz_{k+1/2}),\bz_{k+1/2}-\bx}\\
&+2\gamma_{t}\inner{\hat{V}_{k+1/2}(\bx^{t})+H^{\delta_{t}}_{\bx^{t}}(\bW_{k+1/2})-\bar{V}^{t}-H^{\delta_{t},b_{t}}_{\bx^{t}},\bz_{k+1/2}-\bx}\\
&-2\gamma_{t}\inner{V^{\eta_{t}}(\bz_{k+1/2})+\nabla h^{\delta_{t}}(\bz_{k+1/2})-V^{\eta_{t}}(\bx)-\nabla h^{\delta_{t}}(\bx),\bz_{k+1/2}-\bx}\\
&-2\gamma_{t}\inner{V^{\eta_{t}}(\bx)+\nabla h^{\delta_{t}}(\bx),\bz_{k+1/2}-\bx}
\end{align*}
Since the operator $\bx\mapsto V^{\eta_{t}}(\bx)+\nabla h^{\delta_{t}}(\bx)$ is $\eta_{t}$-strongly monotone, we can further bound the expression above as 
\begin{align*}
\norm{\bz_{k+1}-\bx}^{2}&\leq\norm{\bz_{k}-\bx}^{2}-\norm{\bz_{k+1/2}-\bz_{k}}^{2}+8\gamma_{t}^{2}\sum_{i\in\scrI}L_{h_{i}}^{2}n^{2}_{i}+4\gamma^{2}_{t}(\scrL_{f}(\xi_{k+1/2})^{2}+\eta^{2}_{0})\norm{\bz_{k+1/2}-\bx^{t}}^{2}\\
&-2\gamma_{t}\inner{\hat{V}_{k+1/2}(\bz_{k+1/2})+H^{\delta_{t}}_{\bz_{k+1/2}}(\bW_{k+1/2})-V(\bz_{k+1/2})-\nabla h^{\delta_{t}}(\bz_{k+1/2}),\bz_{k+1/2}-\bx}\\
&+2\gamma_{t}\inner{\hat{V}_{k+1/2}(\bx^{t})+H^{\delta_{t}}_{\bx^{t}}(\bW_{k+1/2})-\bar{V}^{t}-H^{\delta_{t},b_{t}}_{\bx^{t}},\bz_{k+1/2}-\bx}\\
&-2\gamma_{t}\eta_{t}\norm{\bz_{k+1/2}-\bx}^{2}\\
&-2\gamma_{t}\inner{V^{\eta_{t}}(\bx)+\nabla h^{\delta_{t}}(\bx),\bz_{k+1/2}-\bx}
\end{align*}
Next, we split the mini-batch estimator $\bar{V}^{t}$ into its mean component and its error component so that 
\begin{equation}\label{eq:errV}
\bar{V}^{t}=V(\bx^{t})+\eps^{t}_{V}(\xi^{1:b_{t}}_{t}).
\end{equation}
Similarly, we write 
\begin{equation}\label{eq:errH}
\eps^{t}_{h}(\bW^{1:b_{t}}_{t})\eqdef H^{\delta_{t},b_{t}}_{\bx^{t}}-\nabla h^{\delta_{t}}(\bx^{t}).
\end{equation}
Using these error terms, we may further bound the right hand side of the penultimate display as 
\begin{align*}
\norm{\bz_{k+1}-\bx}^{2}&\leq\norm{\bz_{k}-\bx}^{2}-\norm{\bz_{k+1/2}-\bz_{k}}^{2}+8\gamma_{t}^{2}\sum_{i\in\scrI}L_{h_{i}}^{2}n^{2}_{i}+4\gamma^{2}_{t}(\scrL_{f}(\xi_{k+1/2})^{2}+\eta^{2}_{0})\norm{\bz_{k+1/2}-\bx^{t}}^{2}\\
&-2\gamma_{t}\inner{\hat{V}_{k+1/2}(\bz_{k+1/2})+H^{\delta_{t}}_{\bz_{k+1/2}}(\bW_{k+1/2})-V(\bz_{k+1/2})-\nabla h^{\delta_{t}}(\bz_{k+1/2}),\bz_{k+1/2}-\bx}\\
&-2\gamma_{t}\inner{\left(V(\bx^{t})+\nabla h^{\delta_{t}}(\bx^{t})\right)-\left(\hat{V}_{k+1/2}(\bx^{t})+H^{\delta_{t}}_{\bx^{t}}(\bW_{k+1/2})\right),\bz_{k+1/2}-\bx}\\
&-2\gamma_{t}\eta_{t}\norm{\bz_{k+1/2}-\bx}^{2}-2\gamma_{t}\inner{\eps^{t}_{V}(\xi^{1:b_{t}}_{t})+\eps^{t}_{h}(\bW^{1:b_{t}}_{t}),\bz_{k+1/2}-\bx}\\
&-2\gamma_{t}\inner{V^{\eta_{t}}(\bx)+\nabla h^{\delta_{t}}(\bx),\bz_{k+1/2}-\bx}
\end{align*}
A simple application of the triangle inequality shows
$$
-2\gamma_{t}\eta_{t}\norm{\bz_{k+1/2}-\bx}^{2}\leq 2\gamma_{t}\eta_{t}\norm{\bz_{k+1/2}-\bz_{k}}^{2}-\gamma_{t}\eta_{t}\norm{\bz_{k}-\bx}^{2}.
$$
Using this bound, we continue with the derivations above to arrive at 
\begin{align*}
\norm{\bz_{k+1}-\bx}^{2}&\leq(1-\gamma_{t}\eta_{t})\norm{\bz_{k}-\bx}^{2}-(1-2\gamma_{t}\eta_{t})\norm{\bz_{k+1/2}-\bz_{k}}^{2}\\
&+8\gamma_{t}^{2}\sum_{i\in\scrI}L_{h_{i}}^{2}n^{2}_{i}+4\gamma^{2}_{t}(\scrL_{f}(\xi_{k+1/2})^{2}+\eta^{2}_{0})\norm{\bz_{k+1/2}-\bx^{t}}^{2}\\
&-2\gamma_{t}\inner{\hat{V}_{k+1/2}(\bz_{k+1/2})+H^{\delta_{t}}_{\bz_{k+1/2}}(\bW_{k+1/2})-V(\bz_{k+1/2})-\nabla h^{\delta_{t}}(\bz_{k+1/2}),\bz_{k+1/2}-\bx}\\
&-2\gamma_{t}\inner{\left(V(\bx^{t})+\nabla h^{\delta_{t}}(\bx^{t})\right)-\left(\hat{V}_{k+1/2}(\bx^{t})+H^{\delta_{t}}_{\bx^{t}}(\bW_{k+1/2})\right),\bz_{k+1/2}-\bx}\\
&-2\gamma_{t}\inner{\eps^{t}_{V}(\xi^{1:b_{t}}_{t})+\eps^{t}_{h}(\bW^{1:b_{t}}_{t}),\bz_{k+1/2}-\bx}\\
&-2\gamma_{t}\inner{V^{\eta_{t}}(\bx)+\nabla h^{\delta_{t}}(\bx),\bz_{k+1/2}-\bx}
\end{align*}
By convexity of the application $\bx\mapsto h^{\delta_{t}}(\bx)$, we have 
$$
h^{\delta_{t}}(\bz_{k+1/2})\geq h^{\delta_{t}}(\bx)+\inner{\nabla h^{\delta_{t}}(\bx),\bz_{k+1/2}-\bx}.
$$
Hence, the penultimate display turns into 
\begin{align*}
\norm{\bz_{k+1}-\bx}^{2}&\leq(1-\gamma_{t}\eta_{t})\norm{\bz_{k}-\bx}^{2}-(1-2\gamma_{t}\eta_{t})\norm{\bz_{k+1/2}-\bz_{k}}^{2}\\
&+8\gamma_{t}^{2}\sum_{i\in\scrI}L_{h_{i}}^{2}n^{2}_{i}+4\gamma^{2}_{t}(\scrL_{f}(\xi_{k+1/2})^{2}+\eta^{2}_{0})\norm{\bz_{k+1/2}-\bx^{t}}^{2}\\
&-2\gamma_{t}\inner{\hat{V}_{k+1/2}(\bz_{k+1/2})+H^{\delta_{t}}_{\bz_{k+1/2}}(\bW_{k+1/2})-V(\bz_{k+1/2})-\nabla h^{\delta_{t}}(\bz_{k+1/2}),\bz_{k+1/2}-\bx}\\
&-2\gamma_{t}\inner{\left(V(\bx^{t})+\nabla h^{\delta_{t}}(\bx^{t})\right)-\left(\hat{V}_{k+1/2}(\bx^{t})+H^{\delta_{t}}_{\bx^{t}}(\bW_{k+1/2})\right),\bz_{k+1/2}-\bx}\\
&-2\gamma_{t}\inner{\eps^{t}_{V}(\xi^{1:b_{t}}_{t})+\eps^{t}_{h}(\bW^{1:b_{t}}_{t}),\bz_{k+1/2}-\bx}\\
&-2\gamma_{t}\left(\inner{V^{\eta_{t}}(\bx),\bz_{k+1/2}-\bx}+h^{\delta_{t}}(\bz_{k+1/2})-h^{\delta_{t}}(\bx)\right).
\end{align*}
From Lemma 2 in \cite{YousNedSha10}, we know for $L_{h}\eqdef\sum_{i\in\scrI}L_{h_{i}}$ that
$$
h^{\delta_{t}}(\bz_{k+1/2})-h^{\delta_{t}}(\bx)\geq h(\bz_{k+1/2})-h(\bx)-\delta_{t}L_{h}.
$$
Therefore, 
\begin{align*}
\norm{\bz_{k+1}-\bx}^{2}&\leq(1-\gamma_{t}\eta_{t})\norm{\bz_{k}-\bx}^{2}-(1-2\gamma_{t}\eta_{t})\norm{\bz_{k+1/2}-\bz_{k}}^{2}\\
&+8\gamma_{t}^{2}\sum_{i\in\scrI}L_{h_{i}}^{2}n^{2}_{i}+4\gamma^{2}_{t}(\scrL_{f}(\xi_{k+1/2})^{2}+\eta^{2}_{0})\norm{\bz_{k+1/2}-\bx^{t}}^{2}\\
&-2\gamma_{t}\inner{\hat{V}_{k+1/2}(\bz_{k+1/2})+H^{\delta_{t}}_{\bz_{k+1/2}}(\bW_{k+1/2})-V(\bz_{k+1/2})-\nabla h^{\delta_{t}}(\bz_{k+1/2}),\bz_{k+1/2}-\bx}\\
&-2\gamma_{t}\inner{\left(V(\bx^{t})+\nabla h^{\delta_{t}}(\bx^{t})\right)-\left(\hat{V}_{k+1/2}(\bx^{t})+H^{\delta_{t}}_{\bx^{t}}(\bW_{k+1/2})\right),\bz_{k+1/2}-\bx}\\
&-2\gamma_{t}\inner{\eps^{t}_{V}(\xi^{1:b_{t}}_{t})+\eps^{t}_{h}(\bW^{1:b_{t}}_{t}),\bz_{k+1/2}-\bx}\\
&-2\gamma_{t}\left(\inner{V^{\eta_{t}}(\bx),\bz_{k+1/2}-\bx}+h(\bz_{k+1/2})-h(\bx)\right)+2\gamma_{t}\delta_{t}L_{h}
\end{align*}
which is what has been claimed. 

\subsubsection{Proof of Theorem \ref{th:converge}}\label{app:proofTheoremConvergence}
Let $\{\eta_{t}\}_{t\in\N}$ be a positive sequence with $\eta_{t}\downarrow 0$. Let $\bs_{t}\equiv\bs(\eta_{t})$ denote the corresponding sequence of solutions to $\MVI(V^{\eta_{t}},h)$. Set $q_{t}\equiv 1-\gamma_{t}\eta_{t}\in(0,1/2)$. Then, iterating the energy inequality established in Lemma \ref{lem:energy}, we have for $\bx=\bs_{t}$:
\begin{align*}
&\norm{\bz^{(t)}_{K}-\bs_{t}}^{2}\leq q_{t}^{K}\norm{\bz^{(t)}_{0}-\bs_{t}}^{2}\\
&+\sum_{k=0}^{K-1}q_{t}^{K-k+1}\gamma_{t}^{2}\left( 8\left(\sum_{i\in\scrI}L_{h_{i}}^{2}n^{2}_{i}\right)+4(\scrL_{f}(\xi_{t,k+1/2})^{2}+\eta^{2}_{0})\norm{\bz^{(t)}_{k+1/2}-\bx^{t}}^{2}\right)\\
&-2\gamma_{t}\sum_{k=0}^{K-1}q_{t}^{K-k+1}\inner{\hat{V}_{t,k+1/2}(\bz^{(t)}_{k+1/2})+H^{\delta_{t}}_{\bz^{(t)}_{k+1/2}}(\bW_{t,k+1/2})-V(\bz^{(t)}_{k+1/2})-\nabla h^{\delta_{t}}(\bz^{(t)}_{k+1/2}),\bz^{(t)}_{k+1/2}-\bs_{t}}\\
&-2\gamma_{t}\sum_{k=0}^{K-1}q_{t}^{K-k+1}\inner{\left(V(\bx^{t})+\nabla h^{\delta_{t}}(\bx^{t})\right)-\left(\hat{V}_{t,k+1/2}(\bx^{t})+H^{\delta_{t}}_{\bx^{t}}(\bW_{t,k+1/2})\right),\bz^{(t)}_{k+1/2}-\bs_{t}}\\
&-2\gamma_{t}\sum_{k=0}^{K-1}q_{t}^{K-k+1}\inner{\eps^{t}_{V}(\xi^{1:b_{t}}_{t})+\eps^{t}_{h}(\bW^{1:b_{t}}_{t}),\bz^{(t)}_{k+1/2}-\bs_{t}}\\
&-2\gamma_{t}\sum_{k=0}^{K-1}q^{K-k+1}_{t}\left(\inner{V^{\eta_{t}}(\bs_{t}),\bz^{(t)}_{k+1/2}-\bs_{t}}+h(\bz^{(t)}_{k+1/2})-h(\bs_{t})-2\delta_{t}L_{h}\right).
\end{align*}
By definition of the point $\bs_{t}$, we have $\inner{V^{\eta_{t}}(\bs_{t}),\bz^{(t)}_{k+1/2}-\bs_{t}}+h(\bz^{(t)}_{k+1/2})-h(\bs_{t})\geq 0$. Furthermore, the estimators involved are unbiased, which means 
\begin{align*}
&\Ex[\inner{\hat{V}_{t,k+1/2}(\bz^{(t)}_{k+1/2})+H^{\delta_{t}}_{\bz^{(t)}_{k+1/2}}(\bW_{t,k+1/2})-V(\bz^{(t)}_{k+1/2})-\nabla h^{\delta_{t}}(\bz^{(t)}_{k+1/2}),\bz^{(t)}_{k+1/2}-\bs_{t}}\vert\scrF_{t}]=0,\\
&\Ex[\left(\inner{V(\bx^{t})+\nabla h^{\delta_{t}}(\bx^{t})\right)-\left(\hat{V}_{t,k+1/2}(\bx^{t})+H^{\delta_{t}}_{\bx^{t}}(\bW_{t,k+1/2})\right),\bz^{(t)}_{k+1/2}-\bs_{t}}\vert\scrF_{t}]=0,\\
&\Ex[\inner{\eps^{t}_{V}(\xi^{1:b_{t}}_{t})+\eps^{t}_{h}(\bW^{1:b_{t}}_{t}),\bz^{(t)}_{k+1/2}-\bs_{t}}\vert\scrF_{t}]=0.
\end{align*}
To wit, let us focus on the first line of the above display. Using the law of iterated expectations, we have 
\begin{align*}
&\Ex[\inner{\hat{V}_{t,k+1/2}(\bz^{(t)}_{k+1/2})+H^{\delta_{t}}_{\bz^{(t)}_{k+1/2}}(\bW_{t,k+1/2})-V(\bz^{(t)}_{k+1/2})-\nabla h^{\delta_{t}}(\bz^{(t)}_{k+1/2}),\bz^{(t)}_{k+1/2}-\bs_{t}}\vert\scrF_{t}]=\\
&\Ex\left[\Ex\left(\inner{\hat{V}_{t,k+1/2}(\bz^{(t)}_{k+1/2})+H^{\delta_{t}}_{\bz^{(t)}_{k+1/2}}(\bW_{t,k+1/2})-V(\bz^{(t)}_{k+1/2})-\nabla h^{\delta_{t}}(\bz^{(t)}_{k+1/2}),\bz^{(t)}_{k+1/2}-\bs_{t}}\vert\scrA_{t,k}\right)\vert\scrF_{t}\right]\\
&=0
\end{align*}
This being true because $\bz_{k+1/2}$ is $\scrA_{t,k}$-measurable. The remaining two equalities can be demonstrated in the same way. Since $\bz_{K}=\bx^{t+1}$ and $\bz_{0}=\bx^{t}$, and using the results above, we are left with the estimate
$$
\Ex[\norm{\bx^{t+1}-\bs_{t}}^{2}\vert\scrF_{t}]\leq q_{t}^{K}\norm{\bx^{t}-\bs_{t}}^{2}+K\gamma_{t}^{2}(L_{f}^{2}+\eta^{2}_{0})C^{2}+8\gamma_{t}^{2}\left(\sum_{i\in\scrI}L^{2}_{h_{i}}n^{2}_{i}\right)+2K\gamma_{t}\delta_{t}L_{h}.
$$
From Proposition \ref{prop:Tikhonov}(d), we obtain the estimate 
\begin{align*}
\norm{\bx^{t}-\bs_{t}}^{2}&\leq (1+\gamma_{t}\eta_{t})\norm{\bx^{t}-\bs_{t-1}}^{2}+(1+\frac{1}{\gamma_{t}\eta_{t}})\norm{\bs_{t}-\bs_{t-1}}^{2}\\
&\leq (1+\gamma_{t}\eta_{t})\norm{\bx^{t}-\bs_{t-1}}^{2}+(1+\gamma_{t}\eta_{t})\left(\frac{\eta_{t}-\eta_{t-1}}{\eta_{t}}\right)^{2}\bA^{2}_{x}
\end{align*}
where $\bA_{x}$ is a constant upper bound of $\inf_{\bx\in\SOL(V,h)}\norm{\bx}$ (cf. Proposition \ref{prop:Tikconverge}). Moreover, 
\begin{align*}
(1+\gamma_{t}\eta_{t})q_{t}^{K}=(1+\gamma_{t}\eta_{t})(1-\gamma_{t}\eta_{t})q_{t}^{K-1}=(1-\gamma^{2}_{t}\eta_{t}^{2})q_{t}^{K-1}<q_{t}.
\end{align*}
This allows us to conclude 
\begin{align*}
\Ex[\norm{\bx^{t+1}-\bs_{t}}^{2}\vert\scrF_{t}]&\leq q_{t}\norm{\bx^{t}-\bs_{t-1}}^{2}+q_{t}^{K}(1+\frac{1}{\gamma_{t}\eta_{t}})\bA^{2}_{x}\left(\frac{\eta_{t}-\eta_{t-1}}{\eta_{t}}\right)^{2}\\
&+K\gamma^{2}_{t}(L^{2}_{f}+\eta^{2}_{0})C^{2}+8\sum_{i\in\scrI}L^{2}_{h_{i}}n^{2}_{i}+2KL_{h}\gamma_{t}\delta_{t}.
\end{align*}
Define 
$$
\ca_{t}\eqdef q_{t}^{K}(1+\frac{1}{\gamma_{t}\eta_{t}})\bA^{2}_{x}\left(\frac{\eta_{t}-\eta_{t-1}}{\eta_{t}}\right)^{2}+K\gamma^{2}_{t}(L^{2}_{f}+\eta^{2}_{0})C^{2}+8\gamma^{2}_{t}\sum_{i\in\scrI}L^{2}_{h_{i}}n^{2}_{i}+2KL_{h}\gamma_{t}\delta_{t},
$$
and $\psi_{t}\eqdef \Ex[\norm{\bx^{t}-\bs_{t-1}}^{2}]$, so that we obtain the recursion 
$$
\psi_{t+1}\leq q_{t}\psi_{t}+\ca_{t}.
$$
Under the assumptions stated in Theorem \ref{th:converge}, we have $\sum_{t=0}^{\infty}\gamma_{t}\eta_{t}=\infty$ and $\lim_{t\to\infty}\frac{\ca_{t}}{\gamma_{t}\eta_{t}}=0$. Using Lemma 3 in \cite{Pol87}, it follows $\lim_{t\to\infty}\psi_{t}=0$, and therefore $\lim_{t\to\infty}\norm{\bx^{t}-\bs_{t}}=0$ almost surely. Now, let $\bw\eqdef\inf_{\bx\in\SOL(V,h)}\norm{\bx}$. Then, using the triangle inequality we conclude 
$$
\norm{\bx^{t+1}-\bw}\leq \norm{\bx^{t+1}-\bs_{t}}+\norm{\bs_{t}-\bw}\to 0\quad\text{ as }t\to\infty,\text{ a.s.}
$$
Since $\by(\cdot)$ is Lipschitz continuous (cf. Fact \ref{fact:hierarchicalVI}, Appendix \ref{app:VIs}), the claim follows.

\subsubsection{Proof of Theorem \ref{th:gap}}\label{app:proofofComplexity}
We use the energy estimate formulated in Lemma \ref{lem:energy}  to deduce a bound on the gap function \eqref{eq:gap} relative to a suitably constructed ergodic average.
\begin{lemma}\label{lem:boundgap}
For any $t\in\{0,1,\ldots,T-1\}$, define $\bar{\bz}^{t}\eqdef\frac{1}{K}\sum_{k=0}^{K-1}\bz^{(t)}_{k+1/2}$. Let $\{\gamma_{t}\}_{t},\{\eta_{t}\}_{t}$ be positive sequences satisfying $0<\gamma_{t}\eta_{t}<1/2$. For $k\in\{0,1,\ldots,K-1\}$ define 
\begin{align*}
\bY^{1}_{t,k}&\eqdef \hat{V}_{t,k+1/2}(\bz^{(t)}_{k+1/2})+H^{\delta_{t}}_{\bz^{(t)}_{k+1/2}}(\bW_{t,k+1/2})-V(\bz^{(t)}_{k+1/2})-\nabla h^{\delta_{t}}(\bz^{(t)}_{k+1/2}),\text{ and }\\
\bY^{2}_{t,k}&\eqdef V(\bx^{t})+\nabla h^{\delta_{t}}(\bx^{t})-\hat{V}_{t,k+1/2}(\bx^{t})-H^{\delta_{t}}_{\bx^{t}}(\bW_{t,k+1/2}).
\end{align*}
Under the same Assumptions as in Lemma \ref{lem:energy}, we have for all $\bx\in\scrX$:
\begin{equation}\label{eq:lastbound}
\begin{split}
\gamma_{t}&\left(\inner{V(\bx),\bar{\bz}^{t}-\bx}+h(\bar{\bz}^{t})-h(\bx)\right)\leq\frac{1}{2K}\left(\norm{\bx^{t}-\bx}^{2}-\norm{\bx^{t+1}-\bx}^{2}\right)\\
&+2\frac{\gamma^{2}_{t}}{K}(\scrL_{f}(\xi_{t,k+1/2})^{2}+\eta^{2}_{0})\sum_{k=0}^{K-1}\norm{\bz^{(t)}_{k+1/2}-\bx}^{2}+4\gamma^{2}_{t}\sum_{i\in\scrI}L^{2}_{h_{i}}n^{2}_{i}\\
&-\gamma_{t}\inner{\eps^{t}_{V}(\xi^{1:b_{t}}_{t})+\eps^{t}_{h}(\bW^{1:b_{t}}_{t}),\bar{\bz}^{t}-\bx}+\gamma_{t}\delta_{t}L_{h}-\gamma_{t}\eta_{t}\inner{\bx,\bar{\bz}^{t}-\bx}\\
&-\frac{\gamma_{t}}{K}\sum_{k=0}^{K-1}\inner{\bY^{1}_{t,k},\bz^{(t)}_{k+1/2}-\bx}-\frac{\gamma_{t}}{K}\sum_{k=0}^{K-1}\inner{\bY^{2}_{t,k},\bz^{(t)}_{k+1/2}-\bx}.
\end{split}
\end{equation}
\end{lemma}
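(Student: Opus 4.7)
\textbf{Proof proposal for Lemma \ref{lem:boundgap}.} The plan is to start from the energy inequality proved in Lemma \ref{lem:energy}, evaluated at an arbitrary anchor point $\bx\in\scrX$ (rather than at $\bs_{t}$ as in the proof of Theorem \ref{th:converge}), and then perform an averaging/telescoping argument over the $K$ inner-loop steps. The key algebraic manipulation is to isolate the mixed VI quantity
\[
\inner{V^{\eta_{t}}(\bx),\bz^{(t)}_{k+1/2}-\bx}+h(\bz^{(t)}_{k+1/2})-h(\bx),
\]
which in Lemma \ref{lem:energy} appears with coefficient $-2\gamma_{t}$ on the right-hand side. Rearranging that inequality yields an upper bound for $2\gamma_{t}$ times the above expression in terms of $(1-\gamma_{t}\eta_{t})\norm{\bz^{(t)}_{k}-\bx}^{2}-\norm{\bz^{(t)}_{k+1}-\bx}^{2}$, plus the usual Lipschitz/variance junk and the inner products with the noise terms $\bY^{1}_{t,k},\bY^{2}_{t,k},\eps^{t}_{V},\eps^{t}_{h}$.

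Next I would discard the nonpositive contributions. Since $\gamma_{t}\eta_{t}\in(0,1/2)$ by hypothesis, both $-\gamma_{t}\eta_{t}\norm{\bz^{(t)}_{k}-\bx}^{2}$ and $-(1-2\gamma_{t}\eta_{t})\norm{\bz^{(t)}_{k+1/2}-\bz^{(t)}_{k}}^{2}$ can be dropped, replacing $(1-\gamma_{t}\eta_{t})\norm{\bz^{(t)}_{k}-\bx}^{2}$ by $\norm{\bz^{(t)}_{k}-\bx}^{2}$. Summing the resulting inequality over $k=0,\ldots,K-1$ makes the term $\norm{\bz^{(t)}_{k}-\bx}^{2}-\norm{\bz^{(t)}_{k+1}-\bx}^{2}$ telescope, collapsing to $\norm{\bx^{t}-\bx}^{2}-\norm{\bx^{t+1}-\bx}^{2}$ by virtue of the initial/terminal conditions $\bz^{(t)}_{0}=\bx^{t}$ and $\bz^{(t)}_{K}=\bx^{t+1}$.

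Dividing by $2K$ and exploiting (i) linearity of $\inner{V(\bx),\cdot}$ and of $\eta_{t}\inner{\bx,\cdot}$, together with the definition $\bar{\bz}^{t}=\tfrac{1}{K}\sum_{k=0}^{K-1}\bz^{(t)}_{k+1/2}$, and (ii) convexity of $h$ (Assumption \ref{ass:implicit}), which gives the Jensen estimate
\[
\tfrac{1}{K}\sum_{k=0}^{K-1}h(\bz^{(t)}_{k+1/2})\;\geq\;h(\bar{\bz}^{t}),
\]
produces the clean lower bound $\gamma_{t}(\inner{V(\bx),\bar{\bz}^{t}-\bx}+h(\bar{\bz}^{t})-h(\bx))+\gamma_{t}\eta_{t}\inner{\bx,\bar{\bz}^{t}-\bx}$ on the LHS, after splitting $V^{\eta_{t}}(\bx)=V(\bx)+\eta_{t}\bx$. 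Moving the $\eta_{t}$-term to the right reproduces the term $-\gamma_{t}\eta_{t}\inner{\bx,\bar{\bz}^{t}-\bx}$ in the claimed bound.

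On the right-hand side, most of the junk either averages straightforwardly (the constant $8\gamma_{t}^{2}\sum_{i}L_{h_{i}}^{2}n_{i}^{2}$ becomes $4\gamma_{t}^{2}\sum_{i}L_{h_{i}}^{2}n_{i}^{2}$ after dividing by $2K$ and summing over $k$) or factors through the $k$-average because the outer-loop errors $\eps^{t}_{V}(\xi^{1:b_{t}}_{t}),\eps^{t}_{h}(\bW^{1:b_{t}}_{t})$ are independent of the inner index $k$ and thus yield $-\gamma_{t}\inner{\eps^{t}_{V}+\eps^{t}_{h},\bar{\bz}^{t}-\bx}$. The two noise inner products with $\bY^{1}_{t,k}$ and $\bY^{2}_{t,k}$, which genuinely depend on $k$, remain as averaged sums $-\tfrac{\gamma_{t}}{K}\sum_{k}\inner{\bY^{j}_{t,k},\bz^{(t)}_{k+1/2}-\bx}$ for $j=1,2$. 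The remaining Lipschitz/variance term involving $(\scrL_{f}^{2}+\eta_{0}^{2})\norm{\bz^{(t)}_{k+1/2}-\bx^{t}}^{2}$ becomes $\tfrac{2\gamma_{t}^{2}}{K}$ times the sum, and the smoothing bias $2\gamma_{t}\delta_{t}L_{h}$ averages to $\gamma_{t}\delta_{t}L_{h}$, matching the claimed inequality \eqref{eq:lastbound}. The only delicate point is bookkeeping: ensuring every term from Lemma \ref{lem:energy} is carried through consistently when switching from the generic anchor $\bx$ in Lemma \ref{lem:energy} to the same anchor here, and confirming that the Jensen step for $h$ goes in the correct direction (which it does, because $h$ is \emph{subtracted}, so a lower bound on the averaged $h(\bz^{(t)}_{k+1/2})$ produces an upper bound on the averaged $-h(\bz^{(t)}_{k+1/2})$ only after the minus sign is taken into account, turning into $-h(\bar{\bz}^{t})$ on the LHS as desired).
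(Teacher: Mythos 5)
Your proposal is correct and follows essentially the same route as the paper's proof: rearranging the energy inequality of Lemma \ref{lem:energy}, using $\gamma_{t}\eta_{t}\in(0,1/2)$ to absorb the negative quadratic terms, summing and telescoping over $k$ with $\bz^{(t)}_{0}=\bx^{t}$, $\bz^{(t)}_{K}=\bx^{t+1}$, applying Jensen's inequality to the convex map $\bz\mapsto\inner{V(\bx),\bz-\bx}+h(\bz)-h(\bx)$, and dividing by $2K$ while tracking the remaining error and bias terms. The coefficient bookkeeping you describe (the factor $4\gamma_{t}^{2}\sum_{i}L_{h_{i}}^{2}n_{i}^{2}$, the bias $\gamma_{t}\delta_{t}L_{h}$, the regularization term $-\gamma_{t}\eta_{t}\inner{\bx,\bar{\bz}^{t}-\bx}$, and the averaged $\bY^{1},\bY^{2}$ sums) matches the paper, and your Jensen step goes in the correct direction.
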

\begin{proof}
We depart from the energy bound in Lemma \ref{lem:energy}. Rearranging this inequality and using $\gamma_{t}\eta_{t}\in(0,1/2)$, it follows 
\begin{align*}
2&\gamma_{t}\left(\inner{V(\bx),\bz^{(t)}_{k+1/2}-\bx}+h(\bz^{(t)}_{k+1/2})-h(\bx)\right)\leq \norm{\bz^{(t)}_{k}-\bx}^{2}-\norm{\bz^{(t)}_{k+1}-\bx}^{2}\\
&+8\gamma_{t}^{2}\sum_{i\in\scrI}L_{h_{i}}^{2}n^{2}_{i}+4\gamma^{2}_{t}(\scrL_{f}(\xi_{t,k+1/2})^{2}+\eta^{2}_{0})\norm{\bz^{(t)}_{k+1/2}-\bx^{t}}^{2}\\
&-2\gamma_{t}\inner{\hat{V}_{t,k+1/2}(\bz^{(t)}_{k+1/2})+H^{\delta_{t}}_{\bz^{(t)}_{k+1/2}}(\bW_{k+1/2})-V(\bz^{(t)}_{k+1/2})-\nabla h^{\delta_{t}}(\bz^{(t)}_{k+1/2}),\bz^{(t)}_{k+1/2}-\bx}\\
&-2\gamma_{t}\inner{\left(V(\bx^{t})+\nabla h^{\delta_{t}}(\bx^{t})\right)-\left(\hat{V}_{t,k+1/2}(\bx^{t})+H^{\delta_{t}}_{\bx^{t}}(\bW_{t,k+1/2})\right),\bz^{(t)}_{k+1/2}-\bx}\\
&-2\gamma_{t}\inner{\eps^{t}_{V}(\xi^{1:b_{t}}_{t})+\eps^{t}_{h}(\bW^{1:b_{t}}_{t}),\bz^{(t)}_{k+1/2}-\bx}+2\gamma_{t}\delta_{t}L_{h}-2\gamma_{t}\eta_{t}\inner{\bx,\bz^{(t)}_{k+1/2}-\bx}.
\end{align*}
Summing from $k=0,\ldots,K-1$ and calling $\bar{\bz}^{t}\eqdef\frac{1}{K}\sum_{k=0}^{K-1}\bz^{(t)}_{k+1/2}$, we get first from Jensen's inequality 
$$
2\frac{\gamma_{t}}{K}\sum_{k=0}^{K-1}\left(\inner{V(\bx),\bz^{(t)}_{k+1/2}-\bx}+h(\bz^{(t)}_{k+1/2})-h(\bx)\right)\geq 2\gamma_{t}\left(\inner{V(\bx),\bar{\bz}^{t}-\bx}+h(\bar{\bz}^{t})-h(\bx)\right).
$$
Second, telescoping the expression in the penultimate display and using the definitions of the process $\{\bY^{\nu}_{t,k}\}_{k=0}^{K-1}$, we deduce the bound
\begin{align*}
\gamma_{t}\left(\inner{V(\bx),\bar{\bz}^{t}-\bx}+h(\bar{\bz}^{t})-h(\bx)\right)&\leq\frac{1}{2K}\left(\norm{\bx^{t}-\bx}^{2}-\norm{\bx^{t+1}-\bx}^{2}\right)\\
&+2\frac{\gamma^{2}_{t}}{K}\sum_{k=0}^{K-1}(\scrL_{f}(\xi_{t,k+1/2})^{2}+\eta^{2}_{0})\norm{\bz_{k+1/2}-\bx}^{2}+4\gamma^{2}_{t}\sum_{i\in\scrI}L^{2}_{h_{i}}n^{2}_{i}\\
&-\gamma_{t}\inner{\eps^{t}_{V}(\xi^{1:b_{t}}_{t})+\eps^{t}_{h}(\bW^{1:b_{t}}_{t}),\bar{\bz}^{t}-\bx}+\gamma_{t}\delta_{t}L_{h}-\gamma_{t}\eta_{t}\inner{\bx,\bar{\bz}^{t}-\bx}\\
&-\frac{\gamma_{t}}{K}\sum_{k=0}^{K-1}\inner{\bY^{1}_{t,k},\bz^{(t)}_{k+1/2}-\bx}-\frac{\gamma_{t}}{K}\sum_{k=0}^{K-1}\inner{\bY^{2}_{t,k},\bz^{(t)}_{k+1/2}-\bx}.
\end{align*}
\end{proof}
We can now give the proof of Theorem \ref{th:gap}. Let us introduce the auxiliary processes $\{\bu^{\nu}_{t,k}\}_{k=0}^{K-1}$ for $\nu=1,2$ defined recursively as 
\begin{equation}
\bu^{\nu}_{t,k+1}=\Pi_{\scrX}(\bu^{\nu}_{t,k}-\gamma_{t}\bY^{\nu}_{t,k}),\quad\bu^{\nu}_{t,0}=\bx^{t}.
\end{equation}
The definition of the auxiliary sequence gives for $\nu=1,2$ (see e.g. \cite{NJLS09})
\begin{align*}
\norm{\bu^{\nu}_{t,k+1}-\bx}^{2}&\leq \norm{\bu^{\nu}_{t,k}-\bx}^{2}-2\gamma_{t}\inner{\bY^{\nu}_{t,k},\bu^{\nu}_{t,k}-\bx}+\gamma^{2}_{t}\norm{\bY^{\nu}_{t,k}}^{2}\\
&=\norm{\bu^{\nu}_{t,k}-\bx}^{2}+2\gamma_{t}\inner{\bY^{\nu}_{t,k},\bz^{(t)}_{k+1/2}-\bx}-2\gamma_{t}\inner{\bY^{\nu}_{t,k},\bu^{\nu}_{t,k}-\bz^{(t)}_{k+1/2}}+\gamma^{2}_{t}\norm{\bY^{\nu}_{t,k}}^{2}.
\end{align*}
Rearranging and telescoping shows 
\begin{align*}
-2\gamma_{t}\sum_{k=0}^{K-1}\inner{\bY^{\nu}_{t,k},\bz^{(t)}_{k+1/2}-\bx}&\leq \norm{\bu^{\nu}_{t,0}-\bx}^{2}-\norm{\bu^{\nu}_{t,K}-\bx}^{2}+\gamma^{2}_{t}\sum_{k=0}^{K-1}\norm{\bY^{\nu}_{t,k}}^{2}\\
&-2\gamma_{t}\sum_{k=0}^{K-1}\inner{\bY^{\nu}_{t,k},\bz^{(t)}_{k+1/2}-\bu^{\nu}_{t,k}}.
\end{align*}
Plugging this into eq. \eqref{eq:lastbound}, we get 
\begin{align*}
\gamma_{t}&\left(\inner{V(\bx),\bar{\bz}^{t}-\bx}+h(\bar{\bz}^{t})-h(\bx)\right)\leq \frac{1}{2K}\left(\norm{\bx^{t}-\bx}^{2}-\norm{\bx^{t+1}-\bx}^{2}\right)\\
&+2\frac{\gamma^{2}_{t}}{K}\sum_{k=0}^{K-1}(\scrL_{f}(\xi_{t,k+1/2})^{2}+\eta^{2}_{0})\norm{\bz^{(t)}_{k+1/2}-\bx}^{2}+4\gamma^{2}_{t}\sum_{i\in\scrI}L^{2}_{h_{i}}n^{2}_{i}\\
&-\gamma_{t}\inner{\eps^{t}_{V}(\xi^{1:b_{t}}_{t})+\eps^{t}_{h}(\bW^{1:b_{t}}_{t}),\bar{\bz}^{t}-\bx}+\gamma_{t}\delta_{t}L_{h}-\gamma_{t}\eta_{t}\inner{\bx,\bar{\bz}^{t}-\bx}\\
&+\frac{\gamma_{t}}{K}\left(\norm{\bu^{1}_{t,0}-\bx}^{2}-\norm{\bu^{1}_{t,K}-\bx}^{2}+\gamma^{2}_{t}\sum_{k=0}^{K-1}\norm{\bY^{1}_{t,k}}^{2}-2\gamma_{t}\sum_{k=0}^{K-1}\inner{\bY^{1}_{t,k},\bz^{(t)}_{k+1/2}-\bu^{1}_{t,k}}\right)\\
&+\frac{\gamma_{t}}{K}\left(\norm{\bu^{2}_{t,0}-\bx}^{2}-\norm{\bu^{2}_{t,K}-\bx}^{2}+\gamma^{2}_{t}\sum_{k=0}^{K-1}\norm{\bY^{2}_{t,k}}^{2}-2\gamma_{t}\sum_{k=0}^{K-1}\inner{\bY^{2}_{t,k},\bz^{(t)}_{k+1/2}-\bu^{2}_{t,k}}\right)
\end{align*}
Summing this expression over the outer-iteration loop and introduce the averaged iterate 
$$
\bar{\bz}^{T}\eqdef\frac{\sum_{t=0}^{T-1}\gamma_{t}\bar{\bz}^{t}}{\sum_{t=0}^{T-1}\gamma_{t}}.
$$
Jensen's inequality readily implies
$$
\sum_{t=0}^{T-1}\left(\inner{V(\bx),\bar{\bz}^{t}-\bx}+h(\bar{\bz}^{t})-h(\bx)\right)\geq \left(\sum_{t=0}^{T-1}\gamma_{t}\right)\left(\inner{V(\bx),\bar{\bz}^{T}-\bx}+h(\bar{\bz}^{T})-h(\bx)\right).
$$
Recall that $C=\max_{i\in\scrI}C_{i}$ is the upper bound on the diameter of the set $\scrX_{i}$ (cf. Assumption \ref{ass:standing}.(i)). This assumed compactness of the set $\scrX$, we derive the a-priori bounds 
\begin{align*}
&\norm{\bx^{t}-\bx}\leq C\qquad \forall t=0,1,\ldots,T-1,\\
&\norm{\bz^{(t)}_{k+1/2}-\bx}\leq C\qquad \forall k=0,1,\ldots,K-1,\text{ and }\norm{\bar{\bz}^{t}-\bx}\leq C\qquad \forall t=0,1,\ldots,T-1.
\end{align*}
Using these bounds, we conclude 
\begin{align*}
\inner{V(\bx),\bar{\bz}^{T}-\bx}+h(\bar{\bz}^{T})-h(\bx)&\leq \frac{1}{2K\sum_{t=0}^{T-1}\gamma_{t}}\left(\norm{\bx^{0}-\bx}^{2}-\norm{\bx^{T}-\bx}^{2}\right)\\
&+\frac{2\sum_{t=0}^{T-1}\gamma^{2}_{t}\sum_{k=0}^{K-1}(\scrL_{f}(\xi_{t,k+1/2})^{2}+\eta^{2}_{0})\norm{\bz^{(t)}_{k+1/2}-\bx}^{2}}{K\sum_{t=0}^{T-1}\gamma_{t}}\\
&+\frac{\sum_{t=0}^{T-1}4\gamma^{2}_{t}\sum_{i\in\scrI}L^{2}_{h_{i}}n^{2}_{i}+L_{h}\sum_{t=0}^{T-1}\delta_{t}\gamma_{t}}{\sum_{t=0}^{T-1}\gamma_{t}}\\
&+\frac{1}{K\sum_{s=0}^{T-1}\gamma_{s}}\sum_{t=0}^{T-1}\gamma_{t}\left(\norm{\bu^{1}_{t,0}-\bx}^{2}+\norm{\bu^{2}_{t,0}-\bx}^{2}\right)\\
&+\frac{\sum_{t=0}^{T-1}\gamma^{3}_{t}}{K\sum_{s=0}^{T-1}\gamma_{s}}\sum_{k=0}^{K-1}\left(\norm{\bY^{1}_{t,k}}^{2}+\norm{\bY^{2}_{t,k}}^{2}\right)\\
&+\frac{2\sum_{t=0}^{T-1}\gamma^{2}_{t}}{K\sum_{s=0}^{T-1}\gamma_{s}}\sum_{k=0}^{K-1}\left(\inner{\bY^{1}_{t,k},\bu^{1}_{t,k}-\bz^{(t)}_{k+1/2}}+\inner{\bY^{2}_{t,k},\bu_{t,k}^{2}-\bz^{(t)}_{k+1/2}}\right)\\
&+\frac{\sum_{t=0}^{T-1}\gamma_{t}\inner{\eps^{t}_{V}(\xi^{1:b_{t}}_{t})+\eps^{t}_{h}(\bW^{1:b_{t}}_{t}),\bx-\bar{\bz}^{t}}}{\sum_{t=0}^{T-1}\gamma_{t}}\\
&+\frac{\sum_{t=0}^{T}\gamma_{t}\eta_{t}\inner{\bx,\bx-\bar{\bz}^{t}}}{\sum_{t=0}^{T-1}\gamma_{t}}
\end{align*}
We bound each of the terms above individually as follows: 
\begin{enumerate}
\item $\frac{1}{2K\sum_{t=0}^{T-1}\gamma_{t}}\left(\norm{\bx^{0}-\bx}^{2}-\norm{\bx^{T}-\bx}^{2}\right)\leq \frac{C^{2}}{2K\sum_{t=0}^{T-1}\gamma_{t}},$
\item $\frac{2\sum_{t=0}^{T-1}\gamma^{2}_{t}\sum_{k=0}^{K-1}(\scrL_{f}(\xi_{t,k+1/2})^{2}+\eta^{2}_{0})\norm{\bz^{(t)}_{k+1/2}-\bx}^{2}}{K\sum_{t=0}^{T-1}\gamma_{t}}\leq 2C^{2}\frac{\sum_{t=0}^{T-1}\gamma^{2}_{t}\sum_{k=0}^{K-1}(\scrL_{f}(\xi_{t,k+1/2})^{2}+\eta^{2}_{0})}{K\sum_{s=0}^{T-1}\gamma_{s}},$
\item $\frac{1}{K}\left(\norm{\bu^{1}_{t,0}-\bx}^{2}+\norm{\bu^{2}_{t,0}-\bx}^{2}\right)=\frac{2}{K}\norm{\bx^{t}-\bx}^{2}\leq \frac{2C^{2}}{K},$ 
\item $\frac{\sum_{t=0}^{T}\gamma_{t}\eta_{t}\inner{\bx,\bx-\bar{\bz}^{t}}}{\sum_{t=0}^{T-1}\gamma_{t}}\leq \frac{3C^{2}\sum_{t=0}^{T-1}\gamma_{t}\eta_{t}}{2\sum_{t=0}^{T-1}\gamma_{t}},$
\item $\frac{\sum_{t=0}^{T-1}\gamma_{t}\inner{\eps^{t}_{V}(\xi^{1:b_{t}}_{t})+\eps^{t}_{h}(\bW^{1:b_{t}}_{t}),\bx-\bar{\bz}^{t}}}{\sum_{t=0}^{T-1}\gamma_{t}}\leq \frac{\sum_{t=0}^{T-1}\gamma_{t}(C\norm{\eps^{t}_{V}(\xi^{1:b_{t}}_{t})}+C\norm{\eps^{t}_{h}(\bW^{1:b_{t}}_{t})})}{\sum_{t=0}^{T-1}\gamma_{t}}$
\end{enumerate}
Plugging all these bounds into the penultimate display gives 
\begin{align*}
\inner{V(\bx),\bar{\bz}^{T}-\bx}+h(\bar{\bz}^{T})-h(\bx)&\leq  \frac{C^{2}}{2K\sum_{t=0}^{T-1}\gamma_{t}}+2C^{2}\frac{\sum_{t=0}^{T-1}\gamma^{2}_{t}\sum_{k=0}^{K-1}(\scrL_{f}(\xi_{t,k+1/2})^{2}+\eta^{2}_{0})}{K\sum_{s=0}^{T-1}\gamma_{s}}\\
&+\frac{\sum_{t=0}^{T-1}4\gamma^{2}_{t}\sum_{i\in\scrI}L^{2}_{h_{i}}n^{2}_{i}+L_{h}\sum_{t=0}^{T-1}\delta_{t}\gamma_{t}}{\sum_{t=0}^{T-1}\gamma_{t}}\\
&+\frac{2C^{2}}{K}+\frac{\sum_{t=0}^{T-1}\gamma_{t}(C\norm{\eps^{t}_{V}(\xi^{1:b_{t}}_{t})}+C\norm{\eps^{t}_{h}(\bW^{1:b_{t}}_{t})})}{\sum_{t=0}^{T-1}\gamma_{t}}\\
&+\frac{\sum_{t=0}^{T-1}\gamma^{3}_{t}}{K\sum_{t=0}^{T-1}\gamma_{t}}\sum_{k=0}^{K-1}\left(\norm{\bY^{1}_{t,k}}^{2}+\norm{\bY^{2}_{t,k}}^{2}\right)\\
&+\frac{2\sum_{t=0}^{T-1}\gamma^{2}_{t}}{K\sum_{t=0}^{T-1}\gamma_{t}}\sum_{k=0}^{K-1}\left(\inner{\bY^{1}_{t,k},\bu^{1}_{t,k}-\bz^{(t)}_{k+1/2}}+\inner{\bY^{2}_{t,k},\bu_{t,k}^{2}-\bz_{k+1/2}}\right)\\
&+\frac{3C^{2}\sum_{t=0}^{T-1}\gamma_{t}\eta_{t}}{2\sum_{t=0}^{T-1}\gamma_{t}}.
\end{align*}
Let $L_{f}^{2}\eqdef\Ex_{\xi}[\scrL_{f}(\xi)^{2}]$, and define 
\begin{equation}\label{eq:constantfactor}
\begin{split}
\scrD_{K,T}&\eqdef  \frac{C^{2}}{2K\sum_{t=0}^{T-1}\gamma_{t}}+2C^{2}(L_{f}^{2}+\eta^{2}_{0})\frac{\sum_{t=0}^{T-1}\gamma^{2}_{t}}{\sum_{t=0}^{T-1}\gamma_{t}}+\frac{3C^{2}\sum_{t=0}^{T-1}\gamma_{t}\eta_{t}}{2\sum_{t=0}^{T-1}\gamma_{t}}\\
&+\frac{\sum_{t=0}^{T-1}4\gamma^{2}_{t}\sum_{i\in\scrI}L^{2}_{h_{i}}n^{2}_{i}+L_{h}\sum_{t=0}^{T-1}\delta_{t}\gamma_{t}}{\sum_{t=0}^{T-1}\gamma_{t}}+\frac{2C^{2}}{K}.
\end{split}
\end{equation}
Hence, using the definition of the gap function \eqref{eq:gap}, we see 
\begin{align*}
\Ex[\Gamma(\bar{\bz}^{T})]&\leq \scrD_{K,T}+\Ex\left[\frac{\sum_{t=0}^{T-1}\gamma^{3}_{t}}{K\sum_{t=0}^{T-1}\gamma_{t}}\sum_{k=0}^{K-1}\left(\norm{\bY^{1}_{t,k}}^{2}+\norm{\bY^{2}_{t,k}}^{2}\right)\right]\\
&+\Ex\left[\frac{2\sum_{t=0}^{T-1}\gamma^{2}_{t}}{K\sum_{t=0}^{T-1}\gamma_{t}}\sum_{k=0}^{K-1}\left(\inner{\bY^{1}_{t,k},\bu^{1}_{t,k}-\bz^{(t)}_{k+1/2}}+\inner{\bY^{2}_{t,k},\bu_{t,k}^{2}-\bz^{(t)}_{k+1/2}}\right)\right]\\
&+\Ex\left[\frac{\sum_{t=0}^{T-1}\gamma_{t}(C\norm{\eps^{t}_{V}(\xi^{1:b_{t}}_{t})}+C\norm{\eps^{t}_{h}(\bW^{1:b_{t}}_{t})})}{\sum_{t=0}^{T-1}\gamma_{t}}\right].
\end{align*}
Next, observe that 
\begin{align*}
&\norm{\bY^{1}_{t,k}}\leq \norm{\hat{V}_{t,k+1/2}(\bz^{(t)}_{k+1/2})-V(\bz_{k+1/2})}+\norm{H^{\delta_{t}}_{\bz^{(t)}_{k+1/2}}(\bW_{t,k+1/2})-\nabla h^{\delta_{t}}(\bz^{(t)}_{k+1/2})},\text{ and }\\
&\norm{\bY^{2}_{t,k}}\leq \norm{\hat{V}_{t,k+1/2}(\bx^{t})-V(\bx^{t})}+\norm{H^{\delta_{t}}_{\bx^{t}}(\bW_{t,k+1/2})-\nabla h^{\delta_{t}}(\bx^{t})}.
\end{align*}
Moreover, using compactness of $\scrX$, 
\begin{equation}
\inner{\bY^{\nu}_{t,k},\bu^{\nu}_{t,k}-\bz^{(t)}_{k+1/2}}\leq C\norm{\bY^{\nu}_{t,k}}\quad \forall \nu=1,2.
\end{equation}
Lemma \ref{lem:boundH}(c) (for $b=1$) gives 
$$
\Ex_{\bW\sim\Uniform(\sphere_{n})}\left[\norm{H^{\delta_{t}}_{\bx^{t}}(\bW_{t,k+1/2})-\nabla h^{\delta_{t}}(\bx^{t})}^{2}\vert\scrA_{t,k}\right]\leq \sum_{i\in\scrI}n^{2}_{i}L^{2}_{h_{i}}.
$$
Lemma \ref{lem:Variance} in turn implies 
$$
\Ex_{\xi}\left[ \norm{\hat{V}_{t,k+1/2}(\bz^{(t)}_{k+1/2})-V(\bz^{(t)}_{k+1/2})}^{2}\vert\scrA_{t,k}\right]\leq M^{2}_{V}.
$$
By Jensen's inequality in tandem with Lemma \ref{lem:Variance} and Lemma \ref{lem:boundHindividual}, we conclude from \eqref{eq:errV} and \eqref{eq:errH}
\begin{align*}
\Ex\left[\norm{\eps_{V}^{t}(\xi^{1:b_{t}}_{t})}\vert\scrF_{t}\right]&\leq \sqrt{\Ex\left[\norm{\eps_{V}^{t}(\xi^{1:b_{t}}_{t})}^{2}\vert\scrF_{t}\right]}\leq\frac{M_{V}}{\sqrt{b_{t}}},\text{ and }\\
\Ex\left[\norm{\eps_{h}^{t}(\bW^{1:b_{t}}_{t})}\vert\scrF_{t}\right]&\leq \sqrt{\Ex\left[\norm{\eps_{h}^{t}(\bW^{1:b_{t}}_{t})}^{2}\vert\scrF_{t}\right]}\leq\frac{\left(\sum_{i\in\scrI}L^{2}_{h_{i}}n^{2}_{i}\right)^{1/2}}{\sqrt{b_{t}}}.
\end{align*}
This implies $\Ex[\norm{\bY^{\nu}_{t,k}}^{2}\vert\scrA_{t,k}]\leq 2M^{2}_{V}+2\sum_{i\in\scrI}L^{2}_{h_{i}}n^{2}_{i}\equiv\sigma^{2}$ for all $\nu=1,2$, as well as 
$$
\Ex[\norm{\bY^{\nu}_{t,k}}\vert\scrA_{t,k}]\leq \sqrt{\Ex[\norm{\bY^{\nu}_{t,k}}^{2}\vert\scrA_{t,k}]}\leq \sigma.
$$
We conclude, via a repeated application of the law of iterated expectations, that
\begin{align*}
\Ex[\Gamma(\bar{\bz}^{T})]&\leq \scrD_{K,T}+\frac{2\sigma^{2}\sum_{t=0}^{T-1}\gamma^{3}_{t}}{\sum_{t=0}^{T-1}\gamma_{t}}+\frac{4C\sigma\sum_{t=0}^{T-1}\gamma^{2}_{t}}{\sum_{t=0}^{T-1}\gamma_{t}}\\
&+\frac{\sum_{t=0}^{T-1}\frac{\gamma_{t}}{\sqrt{b_{t}}}\left(CM_{V}+C(\sum_{i\in\scrI}L_{h_{i}}^{2}n_{i}^{2})^{1/2}\right)}{\sum_{t=0}^{T-1}\gamma_{t}}.
\end{align*}
Using the specification $\gamma_{t}=1/T=\eta_{t}=\delta_{t}$, as well as $K=T$ and $b_{t}\geq T^{2}$ gives 
\begin{align*}
\scrD_{K,T}\leq \frac{5C^{2}}{T}+\frac{2C^{2}(L^{2}_{f}+1/T^{2})}{T}+\frac{3C^{2}}{2T}+\frac{4(\sum_{i\in\scrI}L^{2}_{h_{i}}n^{2}_{i})+L_{h}}{T}\equiv \const_{T}
\end{align*}
and consequently, 
$$
\Ex[\Gamma(\bar{\bz}^{T})]\leq \const_{T}+\frac{2\sigma^{2}}{T^{2}}+\frac{C\sigma}{T}+\frac{C(M_{V}+(\sum_{i\in\scrI}L^{2}_{h_{i}}n^{2}_{i})^{1/2})}{T}=\scrO(C\sigma/T) 
$$

\subsection{Analysis of the inexact scheme}
\label{sec:analysisinexact}
%

The inexact version of our method $\VRHGS$ is obtained by replacing the estimates for the implicit function using the inexact solution map $\by_{i}^{\eps}$. The precise implementation is summarized in Alg. \ref{alg:inexactSFBF} and Alg. \ref{alg:Inexact-VRHGS}. 
\begin{algorithm}[t!]
\SetAlgoLined
\caption{$\ISFBF(\bar{\bx},\bar{\bv},\bar{H},\gamma,\eta,\delta,\eps,K)$}
 \label{alg:inexactSFBF}
\KwResult{Iterate $\bz^{K}$}
Set $\bz^{0}=\bar{\bx}$\;

\For{$k=0,1,\ldots,K-1$}{
  Update $\bz_{k+1/2}=\Pi_{\scrX}[\bz_{k}-\gamma(\bar{\bv}+\eta\bar{\bx}+\bar{H})]$,\;
  
   Obtain $\hat{V}^{\eta}_{k+1/2}(\bz_{k+1/2})$ and $\hat{V}^{\eta}_{k+1/2}(\bar{\bx})$ as defined in eq. \eqref{eq:hatV}\;

Draw iid direction vectors $\bW_{k+1/2}=\{\bW_{i,k+1/2}\}_{i\in\scrI}$, with each $\bW_{i,k+1/2}\sim\Uniform(\sphere_{i})$. \; 
    
    Obtain $H^{\delta,\eps}_{\bz_{k+1/2}}(\bW_{k+1/2})$ and $H^{\delta,\eps}_{\bar{\bx}}(\bW_{k+1/2})$\;
  
  Update 
      \vspace{-0.2in}
  $$
  \bz_{k+1}=\bz_{k+1/2}-\gamma\left(\hat{V}^{\eta}_{k+1/2}(\bz_{k+1/2})+H^{\delta,\eps}_{\bz_{k+1/2}}(\bW_{k+1/2})-\hat{V}^{\eta}_{k+1/2}(\bar{\bx})-H^{\delta,\eps}_{\bar{\bx}}(\bW_{k+1/2})\right).
  $$
      \vspace{-0.2in}
  \;
  }
\end{algorithm}
\begin{algorithm}[t!]
\SetAlgoLined
    \caption{Inexact Variance Reduced Hierarchical Game Solver 
    (I-$\VRHGS$) } \label{alg:Inexact-VRHGS}
    \KwData{$\bx,T,\{\gamma_{t}\}_{t=0}^{T},\{b_{t}\}_{t=0}^{T},\{\eta_{t}\}_{t=0}^{T},\{\eps_{t}\}_{t=0}^{T}$}
Set $\bx^{0}=\bx$.\\
\For{$t=0,1,\ldots,T-1$}{
For each $i\in\scrI$ receive the oracle feedback $\bar{V}^{t}$ defined by $\bar{V}^{t}_{i}\eqdef\frac{1}{b_{t}}\sum_{s=1}^{b_{t}}\hat{V}_{i}(\bx^{t},\xi_{i,t}^{(s)})$.
\; 

For each $i\in\scrI$ construct the estimator $H^{\delta_{t},b_{t}}_{\bx^{t}}$ defined by $H_{i,\bx_{i}}^{\delta_{t},\eps_{t},b_{t}}\eqdef \frac{1}{b_{t}}\sum_{s=1}^{b_{t}}H_{i,\bx_{i}}^{\delta_{t},\eps_{t}}(\bW_{i,t}^{(s)})$.
\;

Update $\bx^{t+1}=\ISFBF(\bx^{t},\bar{V}^{t},H^{\delta_{t},b_{t},\eps_{t}}_{\bx^{t}},\gamma_{t},\eta_{t},\delta_{t},\eps_{t},b_{t},K)$
  }
\end{algorithm}
The proof of Theorem \ref{th:inexact} is analogous to the one of Theorem \ref{th:gap}, with the simple modification due to inexact feedback from the follower's problem. We state the main changes here, leaving the straightforward derivations to the reader. We begin with the modified energy inequality, similar to Lemma \ref{lem:energy}. Here, we also follow the same notational simplification by suppressing the outer iteration counter $t$ from the variables. 

\begin{lemma}\label{lem:energy-inexact}
Let Assumptions \ref{ass:LLunique},\ref{ass:standing},\ref{ass:implicit} and \ref{ass:SO_V} hold true. Then, for all $t\in\{0,1,\ldots,T-1\}$ and all anchor points $\bx\in\scrX$, we have 
\begin{align*}
\norm{\bz_{k+1}-\bx}^{2}&\leq(1-\gamma_{t}\eta_{t})\norm{\bz_{k}-\bx}^{2}-(1-2\gamma_{t}\eta_{t})\norm{\bz_{k+1/2}-\bz_{k}}^{2}\\
&+8\gamma_{t}^{2}\left(\norm{H^{\delta_{t},\eps_{t}}_{\bz_{k+1/2}}(\bW_{k+1/2})}^{2}+\norm{H^{\delta_{t},\eps_{t}}_{\bx^{t}}(\bW_{k+1/2})}^{2}\right)\\
&+4\gamma^{2}_{t}(\scrL_{f}(\xi_{k+1/2})^{2}+\eta^{2}_{0})\norm{\bz_{k+1/2}-\bx^{t}}^{2}\\
&-2\gamma_{t}\inner{\hat{V}_{k+1/2}(\bz_{k+1/2})+H^{\delta_{t}}_{\bz_{k+1/2}}(\bW_{k+1/2})-V(\bz_{k+1/2})-\nabla h^{\delta_{t}}(\bz_{k+1/2}),\bz_{k+1/2}-\bx}\\
&-2\gamma_{t}\inner{\left(V(\bx^{t})+\nabla h^{\delta_{t}}(\bx^{t})\right)-\left(\hat{V}_{k+1/2}(\bx^{t})+H^{\delta_{t}}_{\bx^{t}}(\bW_{k+1/2})\right),\bz_{k+1/2}-\bx}\\
&-2\gamma_{t}\inner{\eps^{t}_{V}(\xi^{1:b_{t}})+\eps^{t}_{h}(\bW^{1:b_{t}}),\bz_{k+1/2}-\bx}\\
&-2\gamma_{t}\inner{H^{\delta_{t},b_{t}}_{\bx^{t}}-H^{\delta_{t},b_{t},\eps_{t}}_{\bx^{t}}+H^{\delta_{t},\eps_{t}}_{\bx^{t}}(\bW_{k+1/2})-H^{\delta_{t}}_{\bx^{t}}(\bW_{k+1/2}),\bz_{k+1/2}-\bx}\\
&-2\gamma_{t}\inner{H^{\delta_{t},\eps_{t}}_{\bz_{k+1/2}}(\bW_{k+1/2})-H^{\delta_{t}}_{\bz_{k+1/2}}(\bW_{k+1/2}),\bz_{k+1/2}-\bx}\\
&-2\gamma_{t}\left(\inner{V^{\eta_{t}}(\bx),\bz_{k+1/2}-\bx}+h(\bz_{k+1/2})-h(\bx)\right)+2\gamma_{t}\delta_{t}L_{h},
\end{align*}
where $L_{h}\eqdef\sum_{i\in\scrI}L_{h_{i}}$.
\end{lemma}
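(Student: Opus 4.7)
The plan is to mimic the proof of Lemma \ref{lem:energy} verbatim, with two modifications: (i) use the inexact finite-difference estimators $H^{\delta_{t},\eps_{t}}_{\cdot}(\cdot)$ in place of $H^{\delta_{t}}_{\cdot}(\cdot)$ in the Tseng correction step, and (ii) introduce auxiliary add--subtract terms in order to eventually land on the same strong-monotonicity/convexity structure as in the exact case. I would start by expanding
\[
\norm{\bz_{k+1}-\bx}^{2}=\norm{\bz_{k+1}-\bz_{k+1/2}+\bz_{k+1/2}-\bz_{k}+\bz_{k}-\bx}^{2},
\]
substitute the inexact update rule, and split off the three standard inner products plus the squared correction norm, exactly as in Appendix \ref{app:energy}. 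Lemma \ref{lem:projector}(i) still kills the cross term $2\inner{\bz_{k+1/2}-(\bz_{k}-\gamma_{t}(\bar{\bv}+\eta_{t}\bar{\bx}+\bar{H})),\bz_{k+1/2}-\bx}$, and the squared norm of the inexact correction is bounded with a triangle inequality into a $\hat V$-difference piece (identical to the exact proof) plus two pure $H^{\delta_{t},\eps_{t}}$ terms, yielding the displayed $8\gamma_{t}^{2}(\|H^{\delta_{t},\eps_{t}}_{\bz_{k+1/2}}(\bW_{k+1/2})\|^{2}+\|H^{\delta_{t},\eps_{t}}_{\bx^{t}}(\bW_{k+1/2})\|^{2})$ term (instead of the clean Lipschitz bound $8\gamma_{t}^{2}\sum_{i}L_{h_{i}}^{2}n_{i}^{2}$ available in the exact case).

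Next I would handle the remaining inner product $-2\gamma_{t}\inner{\cdots,\bz_{k+1/2}-\bx}$ coming from the Tseng correction. This is where the main bookkeeping happens: in the exact proof, one adds and subtracts $V(\bz_{k+1/2})+\nabla h^{\delta_{t}}(\bz_{k+1/2})$ and $V(\bx^{t})+\nabla h^{\delta_{t}}(\bx^{t})$ to create a martingale-difference piece plus the deterministic monotone piece. Here I do the same, but with two extra add--subtract insertions to pass from $H^{\delta_{t},\eps_{t}}$ to $H^{\delta_{t}}$: specifically, write
\[
H^{\delta_{t},\eps_{t}}_{\bz_{k+1/2}}(\bW_{k+1/2}) = H^{\delta_{t}}_{\bz_{k+1/2}}(\bW_{k+1/2}) + \bigl(H^{\delta_{t},\eps_{t}}_{\bz_{k+1/2}}(\bW_{k+1/2})-H^{\delta_{t}}_{\bz_{k+1/2}}(\bW_{k+1/2})\bigr),
\]
and likewise at the anchor $\bx^{t}$, where I also need to swap the outer-loop mini-batch $\bar{H}=H^{\delta_{t},b_{t},\eps_{t}}_{\bx^{t}}$ for $H^{\delta_{t},b_{t}}_{\bx^{t}}$ via one more add--subtract. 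These extra swaps produce exactly the two dangling error inner products
\[
-2\gamma_{t}\inner{H^{\delta_{t},b_{t}}_{\bx^{t}}-H^{\delta_{t},b_{t},\eps_{t}}_{\bx^{t}}+H^{\delta_{t},\eps_{t}}_{\bx^{t}}(\bW_{k+1/2})-H^{\delta_{t}}_{\bx^{t}}(\bW_{k+1/2}),\bz_{k+1/2}-\bx}
\]
and
\[
-2\gamma_{t}\inner{H^{\delta_{t},\eps_{t}}_{\bz_{k+1/2}}(\bW_{k+1/2})-H^{\delta_{t}}_{\bz_{k+1/2}}(\bW_{k+1/2}),\bz_{k+1/2}-\bx}
\]
that appear in the claim.

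With the inexact/exact swap done, everything else is the exact-case proof: the noise terms $\eps^{t}_{V}(\xi^{1:b_{t}})+\eps^{t}_{h}(\bW^{1:b_{t}})$ come out of the outer-loop mini-batches via \eqref{eq:errV}--\eqref{eq:errH}, the $\eta_{t}$-strong monotonicity of $V^{\eta_{t}}+\nabla h^{\delta_{t}}$ contributes the $-2\gamma_{t}\eta_{t}\norm{\bz_{k+1/2}-\bx}^{2}$ term, and the standard Young inequality trick $-2\gamma_{t}\eta_{t}\norm{\bz_{k+1/2}-\bx}^{2}\leq 2\gamma_{t}\eta_{t}\norm{\bz_{k+1/2}-\bz_{k}}^{2}-\gamma_{t}\eta_{t}\norm{\bz_{k}-\bx}^{2}$ yields the $(1-\gamma_{t}\eta_{t})$ contraction on $\norm{\bz_{k}-\bx}^{2}$ and the $(1-2\gamma_{t}\eta_{t})$ factor in front of $\norm{\bz_{k+1/2}-\bz_{k}}^{2}$. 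Convexity of $h^{\delta_{t}}$ plus the inequality $h^{\delta_{t}}(\bz_{k+1/2})-h^{\delta_{t}}(\bx)\geq h(\bz_{k+1/2})-h(\bx)-\delta_{t}L_{h}$ from Lemma 2 of \cite{YousNedSha10} then converts the gradient inner product into the mixed-VI form $h(\bz_{k+1/2})-h(\bx)$ up to the $2\gamma_{t}\delta_{t}L_{h}$ slack.

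The only genuine obstacle is making sure the add--subtract insertions line up so that, after collecting all terms, what remains of the ``inexact minus exact'' differences are exactly the two additional inner products stated in the lemma, and no auxiliary norms of $H^{\delta_{t},\eps_{t}}$ leak into the estimate beyond the explicit $8\gamma_{t}^{2}(\|H^{\delta_{t},\eps_{t}}_{\bz_{k+1/2}}\|^{2}+\|H^{\delta_{t},\eps_{t}}_{\bx^{t}}\|^{2})$ slack produced by the norm bound on the Tseng correction. This is purely careful bookkeeping; no new inequality is needed beyond those already invoked in the proof of Lemma \ref{lem:energy}.
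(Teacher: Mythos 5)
Your plan is exactly the argument the paper intends: it states Lemma \ref{lem:energy-inexact} as the ``analogous'' derivation of Lemma \ref{lem:energy} with the inexact estimators substituted, leaving the add--subtract bookkeeping (swapping $H^{\delta_t,\eps_t}$ for $H^{\delta_t}$ at both $\bz_{k+1/2}$ and the anchor, and $H^{\delta_t,b_t,\eps_t}_{\bx^t}$ for $H^{\delta_t,b_t}_{\bx^t}$) to the reader, and your decomposition reproduces precisely the two extra error inner products and the explicit $8\gamma_t^2$ slack term of the statement. This is correct and essentially identical to the paper's route, up to an inconsequential sign/constant arrangement in the dangling error terms, which are anyway only used through their norms downstream.
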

The inexact version of Lemma \ref{lem:boundgap} reads then as follows.

\begin{lemma}\label{lem:boundgap-inexact}
For any $t\in\{0,1,\ldots,T-1\}$, define $\bar{\bz}^{t}\eqdef\frac{1}{K}\sum_{k=0}^{K-1}\bz^{(t)}_{k+1/2}$. Let $\{\gamma_{t}\}_{t},\{\eta_{t}\}_{t}$ be positive sequences satisfying $0<\gamma_{t}\eta_{t}<1/2$. For $k\in\{0,1,\ldots,K-1\}$ define the process $\{\bY^{\nu}_{t,k}\}_{k=0}^{K-1},\nu\in\{1,2\}$ as in Lemma \ref{lem:boundgap}. Then, we have for all $\bx\in\scrX$:
\begin{align*}
\gamma_{t}&\left(\inner{V(\bx),\bar{\bz}^{t}-\bx}+h(\bar{\bz}^{t})-h(\bx)\right)\leq\frac{1}{2K}\left(\norm{\bx^{t}-\bx}^{2}-\norm{\bx^{t+1}-\bx}^{2}\right)\\
&+\frac{2\gamma^{2}_{t}}{K}\sum_{k=0}^{K-1}(\scrL_{f}(\xi_{t,k+1/2})^{2}+\eta^{2}_{0})\norm{\bz^{(t)}_{k+1/2}-\bx}^{2}-\gamma_{t}\eta_{t}\inner{\bx,\bar{\bz}^{t}-\bx}\\
&-\gamma_{t}\inner{\eps^{t}_{V}(\xi^{1:b_{t}}_{t})+\eps^{t}_{h}(\bW^{1:b_{t}}_{t}),\bar{\bz}^{t}-\bx}+\gamma_{t}\delta_{t}L_{h}\\
&-\frac{\gamma_{t}}{K}\sum_{k=0}^{K-1}\inner{\bY^{1}_{t,k},\bz^{(t)}_{k+1/2}-\bx}-\frac{\gamma_{t}}{K}\sum_{k=0}^{K-1}\inner{\bY^{2}_{t,k},\bz^{(t)}_{k+1/2}-\bx}\\
&+\frac{2\gamma^{2}_{t}}{K}\sum_{k=0}^{K-1}\left(\norm{H^{\delta_{t},\eps_{t}}_{\bz^{(t)}_{k+1/2}}(\bW_{t,k+1/2})}^{2}+\norm{H^{\delta_{t},\eps_{t}}_{\bx^{t}}(\bW_{t,k+1/2})}^{2}\right)\\
&-\frac{\gamma_{t}}{K}\sum_{k=0}^{K-1}\inner{H^{\delta_{t},b_{t}}_{\bx^{t}}-H^{\delta_{t},b_{t},\eps_{t}}_{\bx^{t}}+H^{\delta_{t},\eps_{t}}_{\bx^{t}}(\bW_{t,k+1/2})-H^{\delta_{t}}_{\bx^{t}}(\bW_{t,k+1/2}),\bz^{(t)}_{k+1/2}-\bx}\\
&-\frac{\gamma_{t}}{K}\sum_{k=0}^{K-1}\inner{H^{\delta_{t},\eps_{t}}_{\bz^{(t)}_{k+1/2}}(\bW_{t,k+1/2})-H^{\delta_{t}}_{\bz^{(t)}_{k+1/2}}(\bW_{t,k+1/2}),\bz^{(t)}_{k+1/2}-\bx}.
\end{align*}
\end{lemma}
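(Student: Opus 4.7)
The plan is to mirror the derivation of Lemma \ref{lem:boundgap} almost verbatim, taking the inexact energy inequality in Lemma \ref{lem:energy-inexact} as the starting point in place of Lemma \ref{lem:energy}. First I would rearrange that bound to isolate the quantity $2\gamma_{t}(\inner{V^{\eta_{t}}(\bx),\bz_{k+1/2}-\bx}+h(\bz_{k+1/2})-h(\bx))$ on the left-hand side, then split $V^{\eta_{t}}(\bx)=V(\bx)+\eta_{t}\bx$ so that the Tikhonov contribution appears explicitly as $\gamma_{t}\eta_{t}\inner{\bx,\bz_{k+1/2}-\bx}$ and the remaining left-hand piece is the desired $2\gamma_{t}(\inner{V(\bx),\bz_{k+1/2}-\bx}+h(\bz_{k+1/2})-h(\bx))$. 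Under the standing assumption $\gamma_{t}\eta_{t}\in(0,1/2)$ the coefficient $(1-2\gamma_{t}\eta_{t})$ on $\norm{\bz_{k+1/2}-\bz_{k}}^{2}$ is positive, so this nonpositive summand can be dropped. I would also discard the $-(1-\gamma_{t}\eta_{t})\norm{\bz_{k+1}-\bx}^{2}$ term except for the leading factor $-\norm{\bz_{k+1}-\bx}^{2}$ preserved for the telescope.

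Next I would sum the resulting one-step estimate over $k=0,\ldots,K-1$ and divide by $K$. Because $\bz^{(t)}_{0}=\bx^{t}$ and $\bz^{(t)}_{K}=\bx^{t+1}$, the $\norm{\bz_{k}-\bx}^{2}-\norm{\bz_{k+1}-\bx}^{2}$ differences telescope to $\norm{\bx^{t}-\bx}^{2}-\norm{\bx^{t+1}-\bx}^{2}$, contributing the leading $\frac{1}{2K}(\norm{\bx^{t}-\bx}^{2}-\norm{\bx^{t+1}-\bx}^{2})$ term. Jensen's inequality applied to the convex map $\bz\mapsto\inner{V(\bx),\bz-\bx}+h(\bz)$ transfers the averaged left-hand side to the ergodic iterate $\bar{\bz}^{t}$, giving the stated $\gamma_{t}(\inner{V(\bx),\bar{\bz}^{t}-\bx}+h(\bar{\bz}^{t})-h(\bx))$ form. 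The Tikhonov term $-\gamma_{t}\eta_{t}\inner{\bx,\bz_{k+1/2}-\bx}$ averages linearly to $-\gamma_{t}\eta_{t}\inner{\bx,\bar{\bz}^{t}-\bx}$, and the mini-batch error inner product similarly collapses to $-\gamma_{t}\inner{\eps^{t}_{V}(\xi^{1:b_{t}}_{t})+\eps^{t}_{h}(\bW^{1:b_{t}}_{t}),\bar{\bz}^{t}-\bx}$ because $\eps^{t}_{V}$ and $\eps^{t}_{h}$ are constants across the inner loop.

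All remaining terms on the right-hand side of Lemma \ref{lem:energy-inexact} either stay inside the sum (they depend on $k$ through $\bz^{(t)}_{k+1/2}$ and the single-shot draws $\bW_{t,k+1/2}$, $\xi_{t,k+1/2}$) or factor out. Specifically, the quadratic piece $8\gamma_{t}^{2}(\norm{H^{\delta_{t},\eps_{t}}_{\bz_{k+1/2}}(\bW_{k+1/2})}^{2}+\norm{H^{\delta_{t},\eps_{t}}_{\bx^{t}}(\bW_{k+1/2})}^{2})$ becomes $\frac{2\gamma_{t}^{2}}{K}\sum_{k=0}^{K-1}(\norm{H^{\delta_{t},\eps_{t}}_{\bz^{(t)}_{k+1/2}}(\bW_{t,k+1/2})}^{2}+\norm{H^{\delta_{t},\eps_{t}}_{\bx^{t}}(\bW_{t,k+1/2})}^{2})$ after dividing by $K$ and halving (from the factor-of-two normalization), and the two $\bY^{1},\bY^{2}$ inner products keep their form. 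The two new inner products carrying the inexactness bias, $\inner{H^{\delta_{t},b_{t}}_{\bx^{t}}-H^{\delta_{t},b_{t},\eps_{t}}_{\bx^{t}}+H^{\delta_{t},\eps_{t}}_{\bx^{t}}(\bW_{k+1/2})-H^{\delta_{t}}_{\bx^{t}}(\bW_{k+1/2}),\bz_{k+1/2}-\bx}$ and $\inner{H^{\delta_{t},\eps_{t}}_{\bz_{k+1/2}}(\bW_{k+1/2})-H^{\delta_{t}}_{\bz_{k+1/2}}(\bW_{k+1/2}),\bz_{k+1/2}-\bx}$, likewise translate with coefficient $\frac{\gamma_{t}}{K}\sum_{k=0}^{K-1}$.

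Since the transition from Lemma \ref{lem:energy-inexact} to the stated bound involves no cancellations between bias and zero-mean noise and no new probabilistic estimates, the argument is purely bookkeeping. The main thing to watch is the sign and coefficient of the Tikhonov-induced term and the correct indexing of the single-shot draws $\bW_{t,k+1/2}$ and $\xi_{t,k+1/2}$ versus the outer mini-batch draws $\bW^{1:b_{t}}_{t}$, $\xi^{1:b_{t}}_{t}$, so that the two sources of randomness remain clearly separated in the final inequality. No genuine obstacle arises because the inexactness-induced inner-product terms are simply kept as-is for subsequent control in the proof of Theorem \ref{th:inexact}, where bounds on $\|H^{\delta_{t},\eps_{t}}-H^{\delta_{t}}\|$ in terms of $\eps_{t}$ will be invoked.
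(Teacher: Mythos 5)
Your proposal follows exactly the route the paper intends: the paper gives no separate proof of Lemma \ref{lem:boundgap-inexact}, deferring to the argument that derives Lemma \ref{lem:boundgap} from Lemma \ref{lem:energy}, and your steps (splitting $V^{\eta_t}(\bx)=V(\bx)+\eta_t\bx$, bounding $(1-\gamma_t\eta_t)\norm{\bz_k-\bx}^2\le\norm{\bz_k-\bx}^2$ and dropping the nonpositive $-(1-2\gamma_t\eta_t)\norm{\bz_{k+1/2}-\bz_k}^2$ term, telescoping over $k$, dividing by $2K$, applying Jensen, and letting the mini-batch errors and the Tikhonov term average out of the inner sum while the inexactness bias terms are carried along unchanged) are precisely that bookkeeping. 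One constant deserves attention: taking Lemma \ref{lem:energy-inexact} literally, the term $8\gamma_t^2\bigl(\norm{H^{\delta_t,\eps_t}_{\bz^{(t)}_{k+1/2}}(\bW_{t,k+1/2})}^2+\norm{H^{\delta_t,\eps_t}_{\bx^{t}}(\bW_{t,k+1/2})}^2\bigr)$ divided by $2K$ yields $\tfrac{4\gamma_t^2}{K}\sum_{k}(\cdots)$, not the $\tfrac{2\gamma_t^2}{K}\sum_{k}(\cdots)$ you assert via ``dividing by $K$ and halving''; to land exactly on the stated constant one should note that the tight estimate in the energy step is $2\gamma_t^2\norm{H^{\delta_t,\eps_t}_{\bz^{(t)}_{k+1/2}}(\bW_{t,k+1/2})-H^{\delta_t,\eps_t}_{\bx^{t}}(\bW_{t,k+1/2})}^2\le 4\gamma_t^2\bigl(\norm{H^{\delta_t,\eps_t}_{\bz^{(t)}_{k+1/2}}(\bW_{t,k+1/2})}^2+\norm{H^{\delta_t,\eps_t}_{\bx^{t}}(\bW_{t,k+1/2})}^2\bigr)$, i.e.\ the factor $8$ in Lemma \ref{lem:energy-inexact} can be replaced by $4$ (a looseness inherited from the exact case). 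This factor-of-two discrepancy is internal to the paper's own pair of statements, only rescales $\alpha^{(1)}_t$, and is immaterial for the $\scrO(C\sigma/T)$ conclusion of Theorem \ref{th:inexact}; apart from flagging it, your argument is complete.
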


Using this bound, we conclude in the same way as in the analysis of the exact scheme that 
\begin{align*}
&\inner{V(\bx),\bar{\bz}^{T}-\bx}+h(\bar{\bz}^{T})-h(\bx)\leq \frac{1}{2K\sum_{t=0}^{T-1}\gamma_{t}}\norm{\bx^{0}-\bx}^{2}+\frac{\sum_{t=0}^{T}\gamma_{t}\eta_{t}\inner{\bx,\bx-\bar{\bz}^{t}}}{\sum_{t=0}^{T-1}\gamma_{t}}+\frac{L_{h}\sum_{t=0}^{T-1}\delta_{t}\gamma_{t}}{\sum_{t=0}^{T-1}\gamma_{t}}\\
&+\frac{2\sum_{t=0}^{T-1}\gamma^{2}_{t}\sum_{k=0}^{K-1}(\scrL_{f}(\xi_{t,k+1/2})^{2}+\eta^{2}_{0})\norm{\bz^{(t)}_{k+1/2}-\bx}^{2}}{K\sum_{s=0}^{T-1}\gamma_{s}}\\
&+\frac{1}{K\sum_{s=0}^{T-1}\gamma_{s}}\sum_{t=0}^{T-1}\gamma_{t}\left(\norm{\bu^{1}_{t,0}-\bx}^{2}+\norm{\bu^{2}_{t,0}-\bx}^{2}\right)\\
&+\frac{\sum_{t=0}^{T-1}\gamma^{3}_{t}}{K\sum_{t=0}^{T-1}\gamma_{t}}\sum_{k=0}^{K-1}\left(\norm{\bY^{1}_{t,k}}^{2}+\norm{\bY^{2}_{t,k}}^{2}\right)\\
&+\frac{2\sum_{t=0}^{T-1}\gamma^{2}_{t}}{K\sum_{t=0}^{T-1}\gamma_{t}}\sum_{k=0}^{K-1}\left(\inner{\bY^{1}_{t,k},\bu^{1}_{t,k}-\bz^{(t)}_{k+1/2}}+\inner{\bY^{2}_{t,k},\bu_{t,k}^{2}-\bz^{(t)}_{k+1/2}}\right)\\
&+\frac{\sum_{t=0}^{T-1}\gamma_{t}\inner{\eps^{t}_{V}(\xi^{1:b_{t}}_{t})+\eps^{t}_{h}(\bW^{1:b_{t}}_{t}),\bx-\bar{\bz}^{t}}}{\sum_{t=0}^{T-1}\gamma_{t}}\\
&+\sum_{t=0}^{T-1}\frac{2\gamma^{2}_{t}}{K\sum_{s=0}^{T-1}\gamma_{s}}\sum_{k=0}^{K-1}\left(\norm{H^{\delta_{t},\eps_{t}}_{\bz^{(t)}_{k+1/2}}(\bW_{t,k+1/2})}^{2}+\norm{H^{\delta_{t},\eps_{t}}_{\bx^{t}}(\bW_{t,k+1/2})}^{2}\right)\\
&-\sum_{t=0}^{T-1}\frac{\gamma_{t}}{K\sum_{s=0}^{T-1}\gamma_{s}}\sum_{k=0}^{K-1}\inner{H^{\delta_{t},b_{t}}_{\bx^{t}}-H^{\delta_{t},b_{t},\eps_{t}}_{\bx^{t}}+H^{\delta_{t},\eps_{t}}_{\bx^{t}}(\bW_{t,k+1/2})-H^{\delta_{t}}_{\bx^{t}}(\bW_{t,k+1/2}),\bz^{(t)}_{k+1/2}-\bx}\\
&-\sum_{t=0}^{T-1}\frac{\gamma_{t}}{K\sum_{s=0}^{T-1}\gamma_{s}}\sum_{k=0}^{K-1}\inner{H^{\delta_{t},\eps_{t}}_{\bz^{(t)}_{k+1/2}}(\bW_{t,k+1/2})-H^{\delta_{t}}_{\bz^{(t)}_{k+1/2}}(\bW_{t,k+1/2}),\bz^{(t)}_{k+1/2}-\bx}.
\end{align*}
Performing the same bounding steps as done in the exact case, we readily arrive at the expression
\begin{align*}
&\inner{V(\bx),\bar{\bz}^{T}-\bx}+h(\bar{\bz}^{T})-h(\bx)\leq  \frac{C^{2}}{2K\sum_{t=0}^{T-1}\gamma_{t}}+2C^{2}\frac{\sum_{t=0}^{T-1}\gamma^{2}_{t}\sum_{k=0}^{K-1}(\scrL_{f}(\xi_{t,k+1/2})^{2}+\eta^{2}_{0})}{K\sum_{t=0}^{T-1}\gamma_{t}}\\
&+\frac{3C^{2}\sum_{t=0}^{T-1}\gamma_{t}\eta_{t}}{2\sum_{t=0}^{T-1}\gamma_{t}}+\frac{2C^{2}}{K}+\frac{L_{h}\sum_{t=0}^{T-1}\delta_{t}\gamma_{t}}{\sum_{t=0}^{T-1}\gamma_{t}}+\frac{\sum_{t=0}^{T-1}\gamma_{t}\left(C\norm{\eps^{t}_{V}(\xi^{1:b_{t}}_{t})}+C\norm{\eps^{t}_{h}(\bW^{1:b_{t}}_{t})}\right)}{\sum_{t=0}^{T-1}\gamma_{t}}\\
&+\frac{\sum_{t=0}^{T-1}\gamma^{3}_{t}}{K\sum_{t=0}^{T-1}\gamma_{t}}\sum_{k=0}^{K-1}\left(\norm{\bY^{1}_{t,k}}^{2}+\norm{\bY^{2}_{t,k}}^{2}\right)+\frac{2\sum_{t=0}^{T-1}\gamma^{2}_{t}}{K\sum_{t=0}^{T-1}\gamma_{t}}\sum_{k=0}^{K-1}\left(\inner{\bY^{1}_{t,k},\bu^{1}_{t,k}-\bz^{(t)}_{k+1/2}}+\inner{\bY^{2}_{t,k},\bu_{t,k}^{2}-\bz^{(t)}_{k+1/2}}\right)\\
&+\sum_{t=0}^{T-1}\frac{2\gamma^{2}_{t}}{K\sum_{s=0}^{T-1}\gamma_{s}}\sum_{k=0}^{K-1}\left(\norm{H^{\delta_{t},\eps_{t}}_{\bz^{(t)}_{k+1/2}}(\bW_{t,k+1/2})}^{2}+\norm{H^{\delta_{t},\eps_{t}}_{\bx^{t}}(\bW_{t,k+1/2})}^{2}\right)\\
&+\sum_{t=0}^{T-1}\frac{C\gamma_{t}}{K\sum_{s=0}^{T-1}\gamma_{s}}\sum_{k=0}^{K-1}\norm{H^{\delta_{t},b_{t}}_{\bx^{t}}-H^{\delta_{t},b_{t},\eps_{t}}_{\bx^{t}}+H^{\delta_{t},\eps_{t}}_{\bx^{t}}(\bW_{t,k+1/2})-H^{\delta_{t}}_{\bx^{t}}(\bW_{t,k+1/2})}\\
&+\sum_{t=0}^{T-1}\frac{C\gamma_{t}}{K\sum_{s=0}^{T-1}\gamma_{s}}\sum_{k=0}^{K-1}\norm{H^{\delta_{t},\eps_{t}}_{\bz_{k+1/2}}(\bW_{t,k+1/2})-H^{\delta_{t}}_{\bz^{(t)}_{k+1/2}}(\bW_{t,k+1/2})}
\end{align*}
We next estimate the error terms appearing because of the inexact feedback map in the coupling function. Lemma \ref{lem:boundvarianceH-inexact} yields 
$$
\Ex\left[\norm{H^{\delta_{t},\eps_{t}}_{\bz_{k+1/2}}(\bW_{t,k+1/2})}^{2}+\norm{H^{\delta_{t},\eps_{t}}_{\bx^{t}}(\bW_{t,k+1/2})}^{2}\vert\scrA_{t,k}\right]\leq 6\sum_{i\in\scrI}\left(\frac{n_{i}}{\delta_{t}}\right)^{2}(2L^{2}_{2,i}\eps^{2}_{t}+L_{1,i}^{2}\delta^{2}_{t})\equiv \alpha^{(1)}_{t}.
$$
Furthermore, the triangle inequality, Jensen's inequality, and \eqref{eq:varianceHeps} gives 
\begin{align*}
&\Ex\left[\norm{H^{\delta_{t},b_{t}}_{\bx^{t}}-H^{\delta_{t},b_{t},\eps_{t}}_{\bx^{t}}+H^{\delta_{t},\eps_{t}}_{\bx^{t}}(\bW_{k+1/2})-H^{\delta_{t}}_{\bx^{t}}(\bW_{k+1/2})}\vert\scrA_{t,k}\right]\leq \Ex\left[\norm{H^{\delta_{t},b_{t}}_{\bx^{t}}-H^{\delta_{t},b_{t},\eps_{t}}_{\bx^{t}}}\vert\scrA_{t,k}\right]\\
&+\Ex\left[\norm{H^{\delta_{t},\eps_{t}}_{\bx^{t}}(\bW_{k+1/2})-H^{\delta_{t}}_{\bx^{t}}(\bW_{k+1/2})}\vert\scrA_{t,k}\right]\\
&\leq \sqrt{\Ex\left[\norm{H^{\delta_{t},b_{t}}_{\bx^{t}}-H^{\delta_{t},b_{t},\eps_{t}}_{\bx^{t}}}^{2}\vert\scrA_{t,k}\right]}+\sqrt{\Ex\left[\norm{H^{\delta_{t},\eps_{t}}_{\bx^{t}}(\bW_{k+1/2})-H^{\delta_{t}}_{\bx^{t}}(\bW_{k+1/2})}^{2}\vert\scrA_{t,k}\right]}\\
&\leq \frac{2\eps_{t}}{\delta_{t}}\left(\frac{1}{\sqrt{b_{t}}}+1\right)\sqrt{\sum_{i\in\scrI}L_{2,i}^{2}n_{i}^{2}}\equiv\alpha^{(2)}_{t}.
\end{align*}
Lastly, we bound 
$$
\Ex\left[\norm{H^{\delta_{t},\eps_{t}}_{\bz_{k+1/2}}(\bW_{k+1/2})-H^{\delta_{t}}_{\bz_{k+1/2}}(\bW_{k+1/2})}\vert\scrA_{t,k}\right]\leq \frac{2\eps_{t}}{\delta_{t}}\sqrt{\sum_{i\in\scrI}L^{2}_{2,i}n^{2}_{i}}\equiv\alpha^{(3)}_{t}.
$$
We now set
\begin{align*} 
\scrD_{K,T}&\eqdef \frac{C^{2}}{2K\sum_{t=0}^{T-1}\gamma_{t}}+2C^{2}(L_{f}^{2}+\eta^{2}_{0})\frac{\sum_{t=0}^{T-1}\gamma^{2}_{t}}{\sum_{t=0}^{T-1}\gamma_{t}}\\
&+\frac{3C^{2}\sum_{t=0}^{T-1}\gamma_{t}\eta_{t}}{2\sum_{t=0}^{T-1}\gamma_{t}}+\frac{2C^{2}}{K}+\frac{L_{h}\sum_{t=0}^{T-1}\delta_{t}\gamma_{t}}{\sum_{t=0}^{T-1}\gamma_{t}}.
\end{align*}
Using the definition of the gap function \eqref{eq:gap}, we deduce 
\begin{align*}
\Ex[\Gamma(\bar{\bz}^{T})]&\leq \scrD_{K,T}+\Ex\left[\frac{\sum_{t=0}^{T-1}\gamma^{3}_{t}}{K\sum_{t=0}^{T-1}\gamma_{t}}\sum_{k=0}^{K-1}\left(\norm{\bY^{1}_{t,k}}^{2}+\norm{\bY^{2}_{t,k}}^{2}\right)\right]\\
&+\Ex\left[\frac{2\sum_{t=0}^{T-1}\gamma^{2}_{t}}{K\sum_{t=0}^{T-1}\gamma_{t}}\sum_{k=0}^{K-1}\left(\inner{\bY^{1}_{t,k},\bu^{1}_{t,k}-\bz^{(t)}_{k+1/2}}+\inner{\bY^{2}_{t,k},\bu_{t,k}^{2}-\bz^{(t)}_{k+1/2}}\right)\right]\\
&+\Ex\left[\frac{\sum_{t=0}^{T-1}\gamma_{t}(C\norm{\eps^{t}_{V}(\xi^{1:b_{t}}_{t})}+C\norm{\eps^{t}_{h}(\bW^{1:b_{t}}_{t})})}{\sum_{t=0}^{T-1}\gamma_{t}}\right]\\
&+\Ex\left[\sum_{t=0}^{T-1}\frac{2\gamma^{2}_{t}}{K\sum_{s=0}^{T-1}\gamma_{s}}\sum_{k=0}^{K-1}\left(\norm{H^{\delta_{t},\eps_{t}}_{\bz_{k+1/2}}(\bW_{t,k+1/2})}^{2}+\norm{H^{\delta_{t},\eps_{t}}_{\bx^{t}}(\bW_{t,k+1/2})}^{2}\right)\right]\\
&+\Ex\left[\sum_{t=0}^{T-1}\frac{C\gamma_{t}}{K\sum_{s=0}^{T-1}\gamma_{s}}\sum_{k=0}^{K-1}\norm{H^{\delta_{t},b_{t}}_{\bx^{t}}-H^{\delta_{t},b_{t},\eps_{t}}_{\bx^{t}}+H^{\delta_{t},\eps_{t}}_{\bx^{t}}(\bW_{t,k+1/2})-H^{\delta_{t}}_{\bx^{t}}(\bW_{t,k+1/2})}\right]\\
&+\Ex\left[\sum_{t=0}^{T-1}\frac{C\gamma_{t}}{K\sum_{s=0}^{T-1}\gamma_{s}}\sum_{k=0}^{K-1}\norm{H^{\delta_{t},\eps_{t}}_{\bz^{(t)}_{k+1/2}}(\bW_{t,k+1/2})-H^{\delta_{t}}_{\bz^{(t)}_{k+1/2}}(\bW_{t,k+1/2})}\right]\\
&\leq \scrD_{K,T}+\frac{2\sigma^{2}\sum_{t=0}^{T-1}\gamma^{3}_{t}}{\sum_{t=0}^{T-1}\gamma_{t}}+\frac{4C\sigma\sum_{t=0}^{T-1}\gamma^{2}_{t}}{\sum_{t=0}^{T-1}\gamma_{t}}+\frac{\sum_{t=0}^{T-1}\frac{\gamma_{t}}{\sqrt{b_{t}}}C(M_{V}+(\sum_{i\in\scrI}L^{2}_{h_{i}}n_{i}^{2})^{1/2}}{\sum_{t=0}^{T-1}\gamma_{t}}\\
&+\sum_{t=0}^{T-1}\frac{2\gamma^{2}_{t}\alpha^{(1)}_{t}}{\sum_{s=0}^{T-1}\gamma_{s}}+\sum_{t=0}^{T-1}\frac{C\alpha^{(2)}_{t}\gamma_{t}}{\sum_{s=0}^{T-1}\gamma_{s}}+\sum_{t=0}^{T-1}\frac{C\gamma_{t}\alpha^{(3)}_{t}}{\sum_{s=0}^{T-1}\gamma_{s}}
\end{align*}
Making the choice $\gamma_{t}=\eta_{t}=\delta_{t}=1/T$ as well as $K=T,b_{t}\geq T^{2}$ and $\eps_{t}=1/T^{2}$, we see that $\alpha^{(1)}_{t}=\scrO(1/T^{2}),\alpha^{(2)}_{t}=\scrO(1/T)$ and $\alpha^{(3)}_{t}=\scrO(1/T)$. It follows $\Ex[\Gamma(\bar{\bz}^{T})]=\scrO(C\sigma/T)$, which completes the proof of Theorem \ref{th:inexact}.

\section{Conclusion}
\label{sec:conclusion}
%
In this work, we proposed a new solution approach to solve a fairly large class of stochastic hierarchical games. Using a combination of smoothing, zeroth-order gradient approximation, and iterative regularization, we develop a novel variance reduction method for stochastic VIs affected by general stochastic noise. We demonstrate consistency of the method by proving that solution trajectory converges almost surely to a particular equilibrium of the game and derive a $\scrO(1/T)$ convergence rate in terms of the expected gap function, using a suitably defined averaged trajectory. This rate result is robust to inexact solutions of the lower level problem of the follower and aligns with state-of-the-art variance reduction methods tailored to finite-sum problems. Furthermore, our approach is based on Tseng's splitting technique, which shares the same number of function calls as the popular extragradient method, but saves on one projection step. This implies that our scheme reduces the oracle complexity relative to vanilla mini-batch approaches and at the same time reduces the computational bottlenecks in every single iteration. This leaves open the door for many future investigations, involving bias and non-convexities, that we leave for future research.

\subsubsection*{acknowledgements}
The research of MST benefited from the support of the FMJH Program Gaspard Monge for optimization and operations research and their interactions with data science. CS acknowledges supported in part by NSFC under Grant 62373050.

\begin{appendix}
\section{Auxiliary Facts}
\subsection{Generalities}
\label{app:general}
%

Given a closed convex set $\scrX\subset\Rn$, we denote by $\Pi_{\scrX}:\Rn\to\scrX$ the orthogonal projector defined as 
$$
\Pi_{\scrX}(\bw):=\argmin_{\bx\in\scrX}\frac{1}{2}\norm{\bx-\bw}^{2}.
$$
This is the solution map of a strongly convex optimization problem with the following well-known properties.
\begin{lemma}\label{lem:projector}
Let $\scrX\subset\Rn$ be a nonempty closed convex set. Then:
\begin{itemize}
\item[(i)] $\Pi_{\scrX}(\bw)$ is the unique point satisfying $\inner{\bw-\Pi_{\scrX}(\bw),\bx-\Pi_{\scrX}(\bw)}\leq 0$ for all $\bx\in\scrX$;
\item[(ii)] For all $\bw\in\Rn$ and $\bx\in\scrX$, we have $\norm{\Pi_{\scrX}(\bw)-\bx}^{2}+\norm{\Pi_{\scrX}(\bw)-\bw}^{2}\leq\norm{\bw-\bx}^{2}$; 
\item[(iii)] For all $\bw,\bv\in\Rn$, $\norm{\Pi_{\scrX}(\bw)-\Pi_{\scrX}(\bv)}^{2}\leq\norm{\bw-\bv}^{2}$; 
\end{itemize}
\end{lemma}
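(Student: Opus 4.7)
The statement collects three standard properties of the Euclidean projector onto a closed convex set, and the proof strategy is the classical chain (i) $\Rightarrow$ (ii) $\Rightarrow$ (iii) built on first-order optimality.

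My plan is to begin with (i) by invoking the fact that $\Pi_{\scrX}(\bw)$ is defined as the (unique) minimizer of the strongly convex function $\bx\mapsto\tfrac{1}{2}\norm{\bx-\bw}^{2}$ over the closed convex set $\scrX$. Uniqueness follows from $1$-strong convexity of the objective. The characterizing inequality is then just the first-order optimality condition: for any $\bx\in\scrX$, the directional derivative of the objective at $\Pi_{\scrX}(\bw)$ along $\bx-\Pi_{\scrX}(\bw)$ must be nonnegative, i.e.\ $\inner{\Pi_{\scrX}(\bw)-\bw,\bx-\Pi_{\scrX}(\bw)}\geq 0$, which is exactly the stated inequality after a sign flip. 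Conversely, any point in $\scrX$ satisfying that inequality is a minimizer by strong convexity, giving uniqueness.

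For (ii), the plan is a direct three-point expansion: write
\[
\norm{\bw-\bx}^{2}=\norm{(\bw-\Pi_{\scrX}(\bw))+(\Pi_{\scrX}(\bw)-\bx)}^{2}=\norm{\bw-\Pi_{\scrX}(\bw)}^{2}+\norm{\Pi_{\scrX}(\bw)-\bx}^{2}+2\inner{\bw-\Pi_{\scrX}(\bw),\Pi_{\scrX}(\bw)-\bx}.
\]
By part (i), the cross term is nonnegative (its negative is $\leq 0$), and rearranging yields the claimed estimate.

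For (iii), I would apply (i) twice, once at $\bw$ with test point $\bx=\Pi_{\scrX}(\bv)\in\scrX$, and once at $\bv$ with test point $\bx=\Pi_{\scrX}(\bw)\in\scrX$, giving
\[
\inner{\bw-\Pi_{\scrX}(\bw),\Pi_{\scrX}(\bv)-\Pi_{\scrX}(\bw)}\leq 0,\qquad \inner{\bv-\Pi_{\scrX}(\bv),\Pi_{\scrX}(\bw)-\Pi_{\scrX}(\bv)}\leq 0.
\]
Adding these two inequalities and rearranging yields firm nonexpansiveness, $\norm{\Pi_{\scrX}(\bw)-\Pi_{\scrX}(\bv)}^{2}\leq\inner{\bw-\bv,\Pi_{\scrX}(\bw)-\Pi_{\scrX}(\bv)}$, and a single application of Cauchy--Schwarz on the right-hand side delivers the claimed nonexpansiveness bound. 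There is no real obstacle here; the only minor bookkeeping concern is being consistent about the sign convention in (i) when applying it in (ii) and (iii), and ensuring that in (iii) the test points indeed lie in $\scrX$, which is immediate since $\Pi_{\scrX}$ takes values in $\scrX$.
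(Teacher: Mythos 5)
Your proof is correct: the variational characterization via first-order optimality, the three-point expansion for (ii), and the two-fold application of (i) plus Cauchy--Schwarz for (iii) are exactly the classical arguments. The paper itself gives no proof of this lemma (it is stated as a collection of well-known properties of the projector), so your write-up simply supplies the standard derivation the paper implicitly relies on.
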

Let $(\Omega,\scrF,\Pr)$ be a given probability space carrying a filtration $\F=\{\scrF_{k}\}_{k\geq 0}$. We call the tuple $(\Omega,\scrF,\F,\Pr)$ a discrete stochastic basis. Given a vector space $\scrK\subseteq\Rn$ with Borel $\sigma$-algebra $\scrB(\scrK)$, a $\scrK$-valued random variable is a $(\scrF,\scrB(\scrK))$-measurable map $f:\Omega\to\scrK$; we write $f\in L^{0}(\Omega,\scrF,\Pr;\scrK)$. For every $p\in[1,\infty]$, define the equivalence class of random variables $f\in L^{0}(\Omega,\scrF,\Pr;\scrK)$ with $\Ex[\norm{f}^{p}]^{1/p}<\infty$ as $f\in L^{p}(\Omega,\scrF,\Pr;\scrK)$. For $f_{1},\ldots,f_{k}\in L^{p}(\Omega,\scrF,\Pr;\scrK)$, we denote the sigma-algebra generated by these random variables by $\sigma(f_{1},\ldots,f_{k})$. We denote by $\ell^{0}_{+}(\F)$ the set of non-negative random variables $\{\xi_{k}\}_{k\geq 0}$ such that for each $k\geq 0$, we have $\xi_{k}\in L^{0}(\Omega,\scrF_{k},\Pr;\R_{+})$. For $p\geq 1$, we set 
$$
\ell^{p}_{+}(\F)=\{\{\xi_{k}\}_{k\geq 0}\in\ell^{0}_{+}(\F)\vert\sum_{k\geq 0}\abs{\xi_{i}}^{p}<\infty\quad\Pr-\text{a.s.}\}.
$$

\subsection{Variational inequalities}
\label{app:VIs}
%

In this appendix we summarize the essential parts from the theory of finite-dimensional variational inequalities we use in the paper. A complete treatment can be found in \cite{FacPan03}. 

The data of a variational inequality problem consist of mappings $\phi:\Rn\to\Rn$ and $r:\Rn\to(-\infty,+\infty]$ a proper, convex and lower semi-continuous function. Denote by $\dom(r)=\{\bx\in\Rn\vert r(\bx)<\infty\}$. The mixed variational inequality problems associated with $(\phi,r)$ is 
\begin{equation}\label{eq:MVI}\tag{$\MVI(\phi,r)$}
\text{find }\bx\in\Rn\text{ such that }\inner{\phi(\bx),\by-\bx}+r(\by)-r(\bx)\geq 0\qquad\forall\by\in\Rn. 
\end{equation}
When $r=\delta_{\scrK}$ for a closed convex set $\scrK\subset\Rn$, the problem $\MVI(\phi,r)$ reduces to the classical variational inequality $\VI(\phi,\scrK)$: 
\begin{equation}\label{eq:VI}\tag{$\VI(\phi,\scrK)$}
\text{find }\bx\in\scrK\text{ such that }\inner{\phi(\bx),\by-\bx}\geq 0\qquad\forall\by\in\scrK. 
\end{equation}
We note in passing that if $r$ is proper, convex and lower semi-continuous, then problem $\MVI(\phi,r)$ is equivalent to the generalized equation 
\begin{equation}\label{eq:MI}
0\in \phi(\bx)+\partial r(\bx),
\end{equation}
where $\partial r(\bx)\eqdef\{p\in\Rn\vert r(\bx')\geq r(\bx)+\inner{p,\bx'-\bx}\quad\forall \bx'\in\Rn\}$ is the subgradient of $r$ at $\bx$.

In order to measure the distance of a candidate point to the solution set we introduce as a merit function for $\MVI(\phi,r)$ the gap function
$$
\Gamma_{\scrK}(\bx)\eqdef\sup_{\bz\in\scrK}\left(\inner{\phi(\bz),\bx-\bz}+r(\bx)-r(\bz)\right),
$$
where $\scrK\subset\Rn$ is a compact subset to handle the possibility of unboundedness of $\dom(r)$. As proven in \cite{Nes07}, this restricted version of the gap function is a valid measure as long as $\scrK$ contains any solution of $\MVI(\phi,r)$.\\

For existence and uniqueness questions of variational problems, we usually rely on monotonicity and continuity properties of the map $\phi$. 
\begin{definition}
A mapping $\phi:\Rn\to\Rn$ is said to be $\mu$-monotone if there exists $\mu\geq 0$ such that 
$$
\inner{\phi(\bx)-\phi(\by),\bx-\by}\geq \mu\norm{\bx-\by}^{2}\qquad \forall \bx,\by\in\Rn.
$$
A $0$-monotone mapping is called monotone. 
\end{definition}
\begin{fact}[Solution Convexity of Monotone VIs]
Consider the problem $\VI(\phi,\scrK)$, where $\phi:\Rn\to\Rn$ is monotone on $\scrK\subset\dom(\phi)$, and $\scrK$ is a closed convex set. Then, the solution set 
$$
\scrS=\{\bx\in\scrK\vert \inner{\phi(\bx),\by-\bx}\geq 0\qquad\forall\by\in\scrK\}
$$
is closed and convex. If $\phi$ is $\mu$-monotone with $\mu>0$ on $\scrK$, then $\scrS$ is a singleton. 
\end{fact}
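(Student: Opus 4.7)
The plan is to prove closedness and convexity simultaneously by identifying $\scrS$ with the Minty solution set
\[
\scrS_{M}\eqdef\{\bx\in\scrK\vert\;\inner{\phi(\by),\by-\bx}\geq 0\text{ for all }\by\in\scrK\}.
\]
Since $\scrS_{M}$ is an intersection of closed affine half-spaces (one per $\by\in\scrK$), it is automatically both closed and convex. Once the identity $\scrS=\scrS_{M}$ is established, the first part of the claim follows at once, and there is no need to argue closedness of $\scrS$ separately from a continuity property of $\bx\mapsto\inner{\phi(\bx),\by-\bx}$.

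For the inclusion $\scrS\subseteq\scrS_{M}$, I would fix $\bx\in\scrS$ and $\by\in\scrK$. Monotonicity of $\phi$ yields $\inner{\phi(\by)-\phi(\bx),\by-\bx}\geq 0$, while the Stampacchia condition at $\bx$ gives $\inner{\phi(\bx),\by-\bx}\geq 0$; adding the two produces $\inner{\phi(\by),\by-\bx}\geq 0$, hence $\bx\in\scrS_{M}$. For the reverse inclusion $\scrS_{M}\subseteq\scrS$, I would invoke the classical line-segment trick: pick $\bx\in\scrS_{M}$ and $\by\in\scrK$, set $\bx_{t}\eqdef\bx+t(\by-\bx)\in\scrK$ for $t\in(0,1]$ (using convexity of $\scrK$), apply the Minty inequality at $\bx$ with test point $\bx_{t}$ to obtain $t\inner{\phi(\bx_{t}),\by-\bx}\geq 0$, divide by $t$, and let $t\downarrow 0$ using continuity (or hemicontinuity) of $\phi$ to conclude $\inner{\phi(\bx),\by-\bx}\geq 0$, so $\bx\in\scrS$.

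For the uniqueness statement, assume $\phi$ is $\mu$-monotone with $\mu>0$ and that $\bx^{\ast},\bar{\bx}\in\scrS$. Testing the VI at $\bx^{\ast}$ with $\by=\bar{\bx}$ and the VI at $\bar{\bx}$ with $\by=\bx^{\ast}$ and adding the two inequalities gives $\inner{\phi(\bx^{\ast})-\phi(\bar{\bx}),\bx^{\ast}-\bar{\bx}}\leq 0$. Strong monotonicity then forces $\mu\norm{\bx^{\ast}-\bar{\bx}}^{2}\leq 0$, and hence $\bx^{\ast}=\bar{\bx}$.

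The main obstacle is the Minty reverse inclusion, which tacitly requires a continuity hypothesis on $\phi$; the statement of the fact only asserts that $\phi$ is a mapping, so a small remark may be warranted to either invoke continuity (the natural reading given the Lipschitz hypotheses used throughout the paper) or to weaken the requirement to hemicontinuity along segments of $\scrK$. The remaining arguments are purely algebraic and routine.
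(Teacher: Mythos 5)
Your argument is correct, and it is worth noting that the paper does not actually prove this Fact at all: it simply refers to \cite[Theorems 2F.1 and 2F.6]{Dontchev:2009tp}. Your Minty-set route (showing $\scrS$ coincides with $\{\bx\in\scrK\,\vert\,\inner{\phi(\by),\by-\bx}\geq 0\ \forall\by\in\scrK\}$, which is an intersection of closed half-spaces, then the standard line-segment argument for the reverse inclusion, and the usual two-point addition argument for uniqueness under $\mu$-monotonicity) is the classical self-contained proof underlying that reference, so you supply what the paper outsources. The one substantive point, which you correctly flag yourself, is that the reverse inclusion $\scrS_{M}\subseteq\scrS$ (and hence convexity, and also closedness if one tried to prove it directly from the defining inequality) requires continuity, or at least hemicontinuity along segments, of $\phi$; the Fact as stated omits this hypothesis. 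This gap is harmless in the paper's setting, where every operator to which the Fact is applied is Lipschitz continuous (cf.\ Assumptions \ref{ass:LLunique} and \ref{ass:standing}(v)), but a careful statement should either add the continuity assumption or restrict to the Minty formulation; your suggestion to insert such a remark is exactly right. The uniqueness step needs no continuity and is fine as written.
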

See \cite[Theorem 2F.1, 2F.6]{Dontchev:2009tp} for a proof of this Fact. 
We now extend the scope of variational inequalities and introduce parameters into the problem data. This is effectively the lower level problem solved by the followers in our hierarchical game model. Let $\scrX\subset\Rn$ be a nonempty compact convex set, and $\scrY\subset\R^{m}$ a closed convex set. The object of study is the parameterized generalized equation 
\begin{equation}\label{eq:ParaGE}
0\in \phi(\bx,\by)+\NC_{\scrY}(\by) 
\end{equation}
for a given function $\phi:\Rn\times\R^{m}\to\R^{m}$ and the normal cone 
\begin{equation}\label{eq:NC}
\NC_{\scrY}(\by)=\left\{\begin{array}{ll} 
\emptyset & \text{if }\by\notin\scrY,\\
\{\xi\in\R^{m}\vert\sup_{\bz\in\scrY}\inner{\xi,\bz-\by}\leq 0\quad \bz\in\scrY\} &  \text{if }\by\in\scrY. 
\end{array}
\right.
\end{equation}
Specifically, we are interested in understanding the properties of the solution mapping 
 \begin{equation}
 \scrS(\bx)\eqdef\{\by\in\R^{m}\vert 0\in \phi(\bx,\by)+\NC_{\scrY}(\by)\}
 \end{equation}
 This is a subclass of classical problems, thoroughly summarized in \cite{Dontchev:2009tp}, and dating back to the landmark paper \cite{Robinson:1980aa}. The interested reader can find proofs of the facts stated below, as well as much more information on this topic in these references. We point out that many of the strong assumption made below can be relaxed, at the price of more complicated verification steps. Our aim is to present a simple and not entirely unrealistic set of verifiable conditions under which our model assumptions provably hold; A more general result can be found in \cite[][Lemma 2.2]{XuSIOPT06}.
\begin{fact}\label{fact:hierarchicalVI}
Consider problem \eqref{eq:ParaGE} with the following assumptions on the problem data: 
\begin{itemize}
\item $\scrX\subset\Rn$ is compact and convex,
\item $\scrY\subset\R^{m}$ is closed convex,
\item $\phi:\Rn\times\R^{m}\to\R^{m}$ is strictly differentiable on $\scrK\times\scrY$, where $\scrK$ is an open set containing $\scrX$;
\item $\phi(x,\cdot)$ is strongly monotone for every $\bx\in\scrX$. 
\end{itemize}
Then $\scrS(\bx)=\{\by(\bx)\}$, and $\by(\cdot)$ is Lipschitz continuous on $\scrX$.  
\end{fact}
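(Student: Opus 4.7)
The plan splits the claim into two parts: single-valuedness of $\scrS(\bx)$ for each fixed $\bx\in\scrX$, and Lipschitz continuity of the resulting map $\by(\cdot)$.

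For the first part, fix $\bx\in\scrX$. Strong monotonicity of $\phi(\bx,\cdot)$ rules out distinct solutions of $\VI(\phi(\bx,\cdot),\scrY)$, since two of them $\by,\by'$ would yield $\mu\norm{\by-\by'}^{2}\leq 0$. Existence follows from a standard coercivity argument: picking any reference point $\by_{0}\in\scrY$, strong monotonicity gives $\inner{\phi(\bx,\by)-\phi(\bx,\by_{0}),\by-\by_{0}}\geq \mu\norm{\by-\by_{0}}^{2}$, from which $\inner{\phi(\bx,\by),\by-\by_{0}}/\norm{\by-\by_{0}}\to\infty$, so the classical Hartman--Stampacchia theorem (e.g.\ \cite[Cor.~2.2.5]{FacPan03}) delivers a solution. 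Hence $\scrS(\bx)=\{\by(\bx)\}$.

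For the second part, I proceed in three substeps. \emph{Substep A (a-priori bound on the range of $\by$):} using the VI at $\by(\bx)$ against $\by_{0}\in\scrY$ combined with strong monotonicity yields $\mu\norm{\by(\bx)-\by_{0}}^{2}\leq \norm{\phi(\bx,\by_{0})}\cdot \norm{\by(\bx)-\by_{0}}$, hence $\norm{\by(\bx)-\by_{0}}\leq \mu^{-1}\sup_{\bx\in\scrX}\norm{\phi(\bx,\by_{0})}$. Continuity of $\phi(\cdot,\by_{0})$ on the open neighbourhood $\scrK$ and compactness of $\scrX$ make the supremum finite, so $\by(\scrX)$ is contained in a compact set $\scrB\subset\scrY$. \emph{Substep B (uniform Lipschitz bound on $\phi$ in its first argument):} strict differentiability on $\scrK\times\scrY$ is equivalent to continuous differentiability there, so $\partial_{\bx}\phi$ is continuous, hence bounded on the compact set $\scrX\times\scrB$ by some constant $L_{\phi}$; the mean value inequality then gives $\norm{\phi(\bx_{1},\by)-\phi(\bx_{2},\by)}\leq L_{\phi}\norm{\bx_{1}-\bx_{2}}$ uniformly for $\by\in\scrB$.

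\emph{Substep C (Lipschitz estimate):} for $\bx_{1},\bx_{2}\in\scrX$ with $\by_{j}\eqdef\by(\bx_{j})$, summing
$\inner{\phi(\bx_{1},\by_{1}),\by_{2}-\by_{1}}\geq 0$ and $\inner{\phi(\bx_{2},\by_{2}),\by_{1}-\by_{2}}\geq 0$, then inserting $\pm\phi(\bx_{2},\by_{1})$ and invoking strong monotonicity, yields
$\mu\norm{\by_{1}-\by_{2}}^{2}\leq \inner{\phi(\bx_{2},\by_{1})-\phi(\bx_{1},\by_{1}),\by_{1}-\by_{2}}\leq L_{\phi}\norm{\bx_{1}-\bx_{2}}\cdot\norm{\by_{1}-\by_{2}}$, whence $\norm{\by_{1}-\by_{2}}\leq (L_{\phi}/\mu)\norm{\bx_{1}-\bx_{2}}$. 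The single nontrivial point is Substep~A: since $\scrY$ need not be bounded, without the a-priori control it supplies one cannot convert the pointwise strict differentiability of $\phi$ into a \emph{uniform} Lipschitz estimate along the image of $\by$. Once Substep~A is in hand, the remaining manipulations are the standard VI calculus.
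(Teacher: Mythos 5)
The paper never proves this Fact: it is stated as imported background, with the reader referred to \cite{Dontchev:2009tp,Robinson:1980aa} and, for a more general statement, to \cite[Lemma 2.2]{XuSIOPT06}. Your proposal is therefore a genuinely different, self-contained route, and it is the natural elementary one: uniqueness and the estimate $\norm{\by(\bx_{1})-\by(\bx_{2})}\leq (L_{\phi}/\mu)\norm{\bx_{1}-\bx_{2}}$ via the two-VI/strong-monotonicity calculation, existence via coercivity and Hartman--Stampacchia (e.g.\ \cite{FacPan03}), and the a-priori localization of the solution range (your Substep A) so that strict differentiability can be converted into a \emph{uniform} Lipschitz constant for $\phi(\cdot,\by)$ along that range. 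What the citation route buys is generality (strong regularity in Robinson's sense rather than global strong monotonicity of the lower-level map); what your route buys is a short verification that matches exactly the hypotheses listed in the Fact. Your identification of Substep A as the one nontrivial point is accurate: without it, unboundedness of $\scrY$ blocks the passage from pointwise strict differentiability to a uniform constant $L_{\phi}$.

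Two points should be made explicit to close the argument. First, Substeps A and C use a single modulus $\mu$, i.e.\ strong monotonicity of $\phi(\bx,\cdot)$ with a constant independent of $\bx$; the hypothesis ``strongly monotone for every $\bx\in\scrX$'' literally permits $\mu=\mu(\bx)$ with $\inf_{\bx\in\scrX}\mu(\bx)=0$, in which case both your a-priori bound and the final constant $L_{\phi}/\mu$ degenerate. The uniform-modulus reading is the intended one (it is what the cited results assume and how Assumption 1 of the paper is used), but you should state it as such rather than use it tacitly. Second, in Substep B the set $\scrK\times\scrY$ is not open ($\scrY$ is only closed), so the equivalence ``strict differentiability $=$ continuous differentiability'' is not the right invocation; instead, use that strict differentiability at each point yields Lipschitz continuity of $\phi$ on a neighbourhood of that point, and then a finite-cover (chaining) argument on the compact convex set $\scrX\times\scrB$, with $\scrB\eqdef\scrY\cap(\by_{0}+R\ball_{m})$ containing the range from Substep A, to produce the uniform constant $L_{\phi}$. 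With these two clarifications your proof is complete and correct.
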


\subsection{Tikhonov Regularization}
\label{app:Tik}
Tikhonov regularization is a classical method in numerical analysis aiming for introducing additional stability into a computational scheme. Given problem $\MVI(\phi,r)$ and $\eta>0$, we define the Tikhonov regularized mixed variational inequality problem as $\MVI(\phi^{\eta},r)$, in which the operator is defined as $\phi^{\eta}(\bx)\eqdef\phi(\bx)+\eta\bx$. It is easy to see that if $\phi$ is $0$-monotone, then $\phi^{\eta}$ is $\eta$-monotone (i.e. strongly monotone). Hence, for every $\eta>0$, problem $\MVI(\phi^{\eta},r)$ has a unique solution $\bx^{\ast}(\eta)$. The first result we are going to demonstrate is that the net $\{\bx^{\ast}(\eta)\}_{\eta\geq 0}$ is bounded. 
\begin{proposition}
\label{prop:Tikhonovbounded}
Consider problem $\MVI(\phi,r)$ admitting a nonempty solution set $\SOL(\phi,r)$. Then, for all $\eta>0$, we have 
$$
\norm{\bx^{\ast}(\eta)}\leq \inf_{\bx\in\SOL(\phi,r)}\norm{\bx}.
$$
\end{proposition}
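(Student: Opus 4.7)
\textbf{Proof plan for Proposition \ref{prop:Tikhonovbounded}.}
The strategy is to compare an arbitrary solution of the unregularized problem against the unique regularized solution $\bx^{\ast}(\eta)$ by exploiting both variational inequalities simultaneously, exactly as one does in the classical Browder--Tikhonov argument for monotone generalized equations. Fix an arbitrary $\bar{\bx}\in\SOL(\phi,r)$ and recall that $\bx^{\ast}(\eta)$ is the unique element of $\SOL(\phi^{\eta},r)$, which exists by Assumption~\ref{ass:standing}(v) (monotonicity of $\phi$) together with the fact that $\phi^{\eta}=\phi+\eta\Id$ is $\eta$-strongly monotone.

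First, I would plug the test point $\by=\bx^{\ast}(\eta)$ into the variational inequality satisfied by $\bar{\bx}$, and symmetrically plug $\by=\bar{\bx}$ into the variational inequality satisfied by $\bx^{\ast}(\eta)$. Summing the two resulting inequalities makes the $r$-terms cancel and yields
\[
\inner{\phi(\bar{\bx})-\phi(\bx^{\ast}(\eta))-\eta\bx^{\ast}(\eta),\,\bx^{\ast}(\eta)-\bar{\bx}}\geq 0.
\]
Invoking monotonicity of $\phi$ on the first two terms shows that $\inner{\phi(\bar{\bx})-\phi(\bx^{\ast}(\eta)),\bx^{\ast}(\eta)-\bar{\bx}}\leq 0$, so all of the ``slack'' in the above inequality must come from the regularization term. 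Rearranging, this gives the key one-line estimate
\[
\eta\,\inner{\bx^{\ast}(\eta),\,\bar{\bx}-\bx^{\ast}(\eta)}\;\geq\;\inner{\phi(\bx^{\ast}(\eta))-\phi(\bar{\bx}),\bx^{\ast}(\eta)-\bar{\bx}}\;\geq\;0.
\]

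Finally, I would expand the inner product on the left and apply Cauchy--Schwarz to obtain $\norm{\bx^{\ast}(\eta)}^{2}\leq\inner{\bx^{\ast}(\eta),\bar{\bx}}\leq\norm{\bx^{\ast}(\eta)}\cdot\norm{\bar{\bx}}$, so that either $\bx^{\ast}(\eta)=0$ (and the bound is trivial) or dividing through by $\norm{\bx^{\ast}(\eta)}>0$ gives $\norm{\bx^{\ast}(\eta)}\leq\norm{\bar{\bx}}$. Because $\bar{\bx}\in\SOL(\phi,r)$ was arbitrary, taking the infimum over $\SOL(\phi,r)$ produces the claimed bound.

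The only subtlety, and what I would treat as the main obstacle, is the careful bookkeeping of the two variational inequalities and a clean way of canceling $\phi$ against itself using only monotonicity (as opposed to strong monotonicity, which is not assumed on $\phi$ itself); everything else is essentially Cauchy--Schwarz. No compactness or closedness of $\SOL(\phi,r)$ is needed for the estimate itself, although those facts ensure that the infimum of the norm over $\SOL(\phi,r)$ is attained and will be relevant later in the proof of Proposition \ref{prop:Tikhonov}(c).
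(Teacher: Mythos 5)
Your proposal is correct and follows essentially the same route as the paper: your step of summing the two variational inequalities (so that the $r$-terms cancel) is exactly equivalent to the paper's invocation of the monotonicity of $\partial r$ applied to the inclusions $-\phi^{\eta}(\bx^{\ast}(\eta))\in\partial r(\bx^{\ast}(\eta))$ and $-\phi(\bar{\bx})\in\partial r(\bar{\bx})$, and from that point on the two arguments coincide (monotonicity of $\phi$, then Cauchy--Schwarz, then taking the infimum over $\bar{\bx}\in\SOL(\phi,r)$).
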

\begin{proof}
Using the characterization of a point $\bx^{\ast}\in\SOL(\phi,r)$ as a solution of a monotone inclusion, we have 
$$
-\phi^{\eta}(\bx^{\ast}(\eta))\in \partial r(\bx^{\ast}(\eta)) \text{ and }-\phi(\bx^{\ast})\in\partial r(\bx^{\ast}).
$$
Since $\partial r$ is maximally monotone, it follows 
\begin{align*}
&\inner{\phi^{\eta}(\bx^{\ast}(\eta))-\phi(\bx^{\ast}),\bx^{\ast}-\bx^{\ast}(\eta)}\geq 0\\
&\iff \inner{\phi(\bx^{\ast})-\phi(\bx^{\ast}(\eta)),\bx^{\ast}-\bx^{\ast}(\eta)}\leq \eta\inner{\bx^{\ast}(\eta),\bx^{\ast}-\bx^{\ast}(\eta)}.
\end{align*}
Since $\phi(\cdot)$ is monotone, it follows $\inner{\phi(\bx^{\ast})-\phi(\bx^{\ast}(\eta)),\bx^{\ast}-\bx^{\ast}(\eta)}\geq 0$, so that 
\begin{align*}
0\leq\inner{\bx^{\ast}(\eta),\bx^{\ast}-\bx^{\ast}(\eta)}=\inner{\bx^{\ast}(\eta),\bx^{\ast}}-\norm{\bx^{\ast}(\eta)}^{2}.
\end{align*}
The Cauchy-Schwarz inequality implies $\norm{\bx^{\ast}(\eta)}\leq\norm{\bx^{\ast}}$. Since $\bx^{\ast}$ has been chosen arbitrarily, the claim follows. 
\end{proof}

We next study the asymptotic regime in which $\eta\to 0^{+}$. Since the net $\{\bx^{\ast}(\eta)\}_{\eta>0}$ is bounded, the Bolzano-Weierstrass theorem guarantees the existence of a converging subsequence $\eta_{t}\to 0$ such that $\bx^{\ast}(t)\equiv \bx^{\ast}(\eta_{t})\to \hat{\bx}$. Since $\partial r$ is maximally monotone, the set $\gr(\partial r)$ is closed in the product topology \citep{BauCom16}. Hence, $\hat{\bx}\in\dom(\partial r)$. Moreover, for all $t$ 
$$
(\bx^{\ast}(t),-\phi(\bx^{\ast}(t))-\eta_{t}\bx^{\ast}(t))\in\gr(\partial r)\qquad\forall t>0.
$$
Continuity and Proposition \ref{prop:Tikhonovbounded}, together with the just mentioned closed graph property, yields for $t\to\infty$,
$$
(\hat{\bx},-\phi(\hat{\bx}))\in\gr(\partial r)\qquad\forall t>0.
$$
The next claim follows.
\begin{proposition}\label{prop:accumulationTik}
Every accumulation point of the Tikhonov sequence $\{\bx^{\ast}(\eta)\}_{\eta>0}$ defines a solution of the problem $\MVI(\phi,r)$. 
\end{proposition}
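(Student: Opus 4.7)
The plan is to combine the boundedness result of Proposition~\ref{prop:Tikhonovbounded} with a standard closed-graph argument for maximally monotone operators. First, I would fix an accumulation point $\hat{\bx}$ of the net $\{\bx^{\ast}(\eta)\}_{\eta>0}$; such a point exists because the net is uniformly bounded by the least-norm solution of $\MVI(\phi,r)$, so one may extract a sequence $\eta_{t}\downarrow 0$ with $\bx^{\ast}(\eta_{t})\to\hat{\bx}$.

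Next, I would translate the statement that $\bx^{\ast}(\eta_{t})$ solves $\MVI(\phi^{\eta_{t}},r)$ into the inclusion form
$$-\phi(\bx^{\ast}(\eta_{t}))-\eta_{t}\bx^{\ast}(\eta_{t})\in\partial r(\bx^{\ast}(\eta_{t})),$$
using the equivalence between $\MVI(\phi,r)$ and the generalized equation~\eqref{eq:MI}. The goal is then to pass to the limit in this inclusion. Continuity of $\phi$ yields $\phi(\bx^{\ast}(\eta_{t}))\to\phi(\hat{\bx})$, while boundedness of $\{\bx^{\ast}(\eta_{t})\}$ together with $\eta_{t}\downarrow 0$ gives $\eta_{t}\bx^{\ast}(\eta_{t})\to 0$. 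Consequently, the subgradient on the right-hand side converges to $-\phi(\hat{\bx})$.

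The concluding step invokes the fact that the subdifferential $\partial r$ of a proper, convex, lower semicontinuous $r$ is maximally monotone, hence its graph in $\Rn\times\Rn$ is sequentially closed under strong convergence of both components. Applied to the convergent pair $\bigl(\bx^{\ast}(\eta_{t}),\,-\phi(\bx^{\ast}(\eta_{t}))-\eta_{t}\bx^{\ast}(\eta_{t})\bigr)\to\bigl(\hat{\bx},-\phi(\hat{\bx})\bigr)$, this yields $(\hat{\bx},-\phi(\hat{\bx}))\in\gr(\partial r)$, i.e. $\hat{\bx}\in\SOL(\phi,r)$.

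The main obstacle is essentially bookkeeping rather than a real difficulty: one must be careful that the subgradient element witnessing membership in $\partial r(\bx^{\ast}(\eta_{t}))$ varies with $t$, so the limiting argument needs the graph-closed form of maximal monotonicity rather than the (weaker) outer semicontinuity of $\partial r$ at a single point. With that identified, all remaining steps are routine continuity arguments.
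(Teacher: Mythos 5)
Your argument is correct and is essentially the paper's own proof: extract a convergent subsequence via the boundedness from Proposition~\ref{prop:Tikhonovbounded}, rewrite the regularized solutions in the inclusion form $-\phi(\bx^{\ast}(\eta_{t}))-\eta_{t}\bx^{\ast}(\eta_{t})\in\partial r(\bx^{\ast}(\eta_{t}))$, and pass to the limit using continuity of $\phi$ together with the closedness of $\gr(\partial r)$ afforded by maximal monotonicity. Your closing remark about needing the graph-closed formulation (with the subgradient element varying along the sequence) matches exactly how the paper handles the limit.
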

We next deduce a non-asymptotic estimate of the Tikhonov sequence. Let $\{\eta_{t}\}_{t\in\N}$ be a positive sequence of regularization parameters satisfying $\eta_{t}\downarrow 0$. Exploiting again the variational characterization of the unique solutions $\bx^{\ast}_{t}\equiv\bx^{\ast}(\eta_{t})$, we have 
$$
-\phi^{\eta_{t-1}}(\bx^{\ast}_{t-1})\in\partial r(\bx^{\ast}_{t-1})\text{ and }-\phi^{\eta_{t}}(\bx^{\ast}_{t})\in\partial r(\bx^{\ast}_{t}).
$$
By monotonicity of $\partial r$, we obtain 
$$
\inner{\phi^{\eta_{t-1}}(\bx^{\ast}_{t-1})-\phi^{\eta_{t}}(\bx^{\ast}_{t}),\bx^{\ast}_{t}-\bx^{\ast}_{t-1}}\geq 0.
$$
Hence, by monotonicity of $\phi$, it follows
\begin{align*}
0&\geq \inner{\phi(\bx^{\ast}_{t-1})-\phi(\bx^{\ast}_{t}),\bx^{\ast}_{t}-\bx^{\ast}_{t-1}}\geq \inner{\eta_{t}\bx^{\ast}_{t}-\eta_{t-1}\bx^{\ast}_{t-1},\bx^{\ast}_{t}-\bx^{\ast}_{t-1}}\\
&=\eta_{t}\inner{\bx^{\ast}_{t}-\bx^{\ast}_{t-1},\bx^{\ast}_{t}-\bx^{\ast}_{t-1}}+(\eta_{t}-\eta_{t-1})\inner{\bx_{t-1}^{\ast},\bx^{\ast}_{t}-\bx^{\ast}_{t-1}}.
\end{align*}
Whence, 
$$
\eta_{t}\norm{\bx^{\ast}_{t}-\bx^{\ast}_{t-1}}^{2}\leq (\eta_{t}-\eta_{t-1})\inner{\bx^{\ast}_{t-1},\bx^{\ast}_{t-1}-\bx^{\ast}_{t}}\leq (\eta_{t}-\eta_{t-1})\norm{\bx^{\ast}_{t-1}}\cdot\norm{\bx^{\ast}_{t}-\bx^{\ast}_{t-1}}.
$$
The next claim follows:
\begin{proposition}
For any monotonically decreasing sequence $\{\eta_{t}\}_{t\in\N}\subset(0,\infty)$ satisfying $\eta_{t}\downarrow 0$ we have 
$$
\left(\frac{\eta_{t}-\eta_{t-1}}{\eta_{t}}\right)\inf_{\bx\in\SOL(\phi,r)}\norm{\bx}\geq\norm{\bx^{\ast}(\eta_{t})-\bx^{\ast}(\eta_{t-1})}.
$$
\end{proposition}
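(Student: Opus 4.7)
The plan is to exploit the variational characterization of $\bx^{\ast}_t \equiv \bx^{\ast}(\eta_t)$ as the unique zero of the maximal monotone operator $\phi^{\eta_t} + \partial r$, which follows from strong monotonicity of $\phi^{\eta_t}$ (since $\phi$ is monotone and $\eta_t > 0$) and the equivalence between $\MVI(\phi^{\eta_t}, r)$ and the inclusion $0 \in \phi^{\eta_t}(\bx) + \partial r(\bx)$ recorded in \eqref{eq:MI}. Concretely, I would record the two subgradient inclusions
\[
-\phi(\bx^{\ast}_t) - \eta_t \bx^{\ast}_t \in \partial r(\bx^{\ast}_t), \qquad -\phi(\bx^{\ast}_{t-1}) - \eta_{t-1} \bx^{\ast}_{t-1} \in \partial r(\bx^{\ast}_{t-1}),
\]
and then use monotonicity of $\partial r$ to obtain
\[
\inner{\phi(\bx^{\ast}_t) + \eta_t \bx^{\ast}_t - \phi(\bx^{\ast}_{t-1}) - \eta_{t-1} \bx^{\ast}_{t-1},\, \bx^{\ast}_{t-1} - \bx^{\ast}_t} \geq 0.
\]

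The next step would be to invoke monotonicity of $\phi$, which eliminates the inner product involving $\phi(\bx^{\ast}_t) - \phi(\bx^{\ast}_{t-1})$ (that inner product with $\bx^{\ast}_{t-1} - \bx^{\ast}_t$ is nonpositive), leaving the purely linear bound
\[
\eta_t \norm{\bx^{\ast}_t - \bx^{\ast}_{t-1}}^2 \leq (\eta_{t-1} - \eta_t) \inner{\bx^{\ast}_{t-1},\, \bx^{\ast}_t - \bx^{\ast}_{t-1}}.
\]
A single application of Cauchy--Schwarz on the right-hand side gives $\eta_t \norm{\bx^{\ast}_t - \bx^{\ast}_{t-1}} \leq (\eta_{t-1} - \eta_t)\,\norm{\bx^{\ast}_{t-1}}$. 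Dividing by $\eta_t > 0$ and applying Proposition \ref{prop:Tikhonovbounded} to estimate $\norm{\bx^{\ast}_{t-1}} \leq \inf_{\bx \in \SOL(\phi,r)} \norm{\bx}$ closes the argument.

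Two points deserve care. First, I would interpret the displayed inequality as $|\eta_t - \eta_{t-1}|/\eta_t$ times the least-norm bound, since for a decreasing sequence $\eta_t - \eta_{t-1} \leq 0$ while a norm is nonnegative; the derivation above produces the correct signed quantity $(\eta_{t-1}-\eta_t)/\eta_t$, and this is the only reading consistent with the non-trivial content of the statement. Second, I would need the solution set $\SOL(\phi, r)$ to be nonempty in order to invoke Proposition \ref{prop:Tikhonovbounded}; this is the standing hypothesis inherited from that earlier result.

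The main obstacle is essentially bookkeeping: getting the signs right in the rearrangement after invoking monotonicity of $\phi$, and then cleanly separating the $\eta$-dependence from the norm. Once the two subgradient inclusions are written down, everything else is a two-line computation. No deeper regularity of $r$ beyond convex lower semicontinuity is needed, and no topological argument is required — the bound is entirely algebraic and valid pointwise in $t$.
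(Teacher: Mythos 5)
Your proof is correct and follows essentially the same route as the paper's: the two inclusions $-\phi^{\eta_t}(\bx^{\ast}(\eta_t))\in\partial r(\bx^{\ast}(\eta_t))$, monotonicity of $\partial r$, then monotonicity of $\phi$, a Cauchy--Schwarz step, division by $\eta_t$, and the least-norm bound of Proposition \ref{prop:Tikhonovbounded}. Your remark that the factor must be read as $(\eta_{t-1}-\eta_{t})/\eta_{t}$ (i.e., $\abs{\eta_t-\eta_{t-1}}/\eta_t$) for a decreasing sequence is also right, and in fact tidies up a sign slip in the paper's own displayed estimate.
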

Lastly, we provide an exact localization result on the Tikhonov sequence.
\begin{proposition}\label{prop:Tikconverge}
Let $\eta_{t}\downarrow 0$ and $\bx^{\ast}_{t}\equiv\bx^{\ast}(\eta_{t})$ the corresponding sequence of solutions to the regularized problem $\MVI(\phi^{\eta_{t}},r)$. Then, $\inf_{\bx\in\SOL(\phi,r)}\norm{\bx}$ exists and is uniquely attained and $\bx^{\ast}_{t}\to \arg\min_{\bx\in\SOL(\phi,r)}\norm{\bx}$.
\end{proposition}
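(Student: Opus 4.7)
The statement is really two assertions packaged together, and I would handle them in sequence, leveraging the two preceding propositions. First, I would establish that the least-norm problem $\min\{\norm{\bx}:\bx\in\SOL(\phi,r)\}$ admits a unique solution. Since $\SOL(\phi,r)$ is assumed nonempty and, by monotonicity of $\phi+\partial r$, is a closed convex subset of $\R^{n}$ (see Fact in Appendix~\ref{app:VIs}, together with the closed-graph argument already used in the proof of Proposition~\ref{prop:accumulationTik}), one can take any minimizing sequence $\{\bx_{k}\}\subset\SOL(\phi,r)$, extract a convergent subsequence from boundedness, and pass to the limit using closedness of $\SOL(\phi,r)$. This produces a minimizer $\bx^{\dagger}$. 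Uniqueness follows at once from the strict convexity of the squared norm on the convex set $\SOL(\phi,r)$: two distinct minimizers would yield a midpoint in $\SOL(\phi,r)$ of strictly smaller norm, contradicting optimality.

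Second, I would show that the whole Tikhonov sequence $\{\bx^{\ast}_{t}\}$ converges to $\bx^{\dagger}$. By Proposition~\ref{prop:Tikhonovbounded}, $\norm{\bx^{\ast}_{t}}\leq\norm{\bx^{\dagger}}$ for every $t$, so the sequence is bounded in $\R^{n}$ and, by Bolzano--Weierstrass, admits accumulation points. Let $\hat{\bx}$ be an arbitrary such accumulation point and fix a subsequence $\bx^{\ast}_{t_{j}}\to\hat{\bx}$. Proposition~\ref{prop:accumulationTik} yields $\hat{\bx}\in\SOL(\phi,r)$, while continuity of the norm and Proposition~\ref{prop:Tikhonovbounded} together give
\[
\norm{\hat{\bx}}=\lim_{j\to\infty}\norm{\bx^{\ast}_{t_{j}}}\leq\norm{\bx^{\dagger}}.
\]
Since $\bx^{\dagger}$ is the \emph{unique} minimum-norm element of $\SOL(\phi,r)$, I conclude $\hat{\bx}=\bx^{\dagger}$. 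Because every accumulation point of the bounded sequence $\{\bx^{\ast}_{t}\}$ coincides with $\bx^{\dagger}$, a standard subsequence argument then forces the whole sequence to converge to $\bx^{\dagger}$, which is exactly the claim.

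I do not anticipate a substantive obstacle here: the argument is a clean synthesis of the previous two propositions with elementary convex-analytic facts about projection onto closed convex sets. The only point requiring some care is the explicit verification that $\SOL(\phi,r)$ is closed and convex in the present mixed-VI setting; this is not restated but is implicit in the maximal monotonicity of $\phi+\partial r$ and in the closed-graph passage already carried out when proving Proposition~\ref{prop:accumulationTik}. Once that is recorded, both the well-posedness of the least-norm selection and the convergence of the entire net follow immediately.
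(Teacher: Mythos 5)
Your proposal is correct and follows essentially the same route as the paper: closedness and convexity of $\SOL(\phi,r)$ (viewed as the zero set of the maximally monotone inclusion) give existence and uniqueness of the least-norm element, and then boundedness from Proposition~\ref{prop:Tikhonovbounded} plus Proposition~\ref{prop:accumulationTik} identify every accumulation point of $\{\bx^{\ast}_{t}\}$ with that unique element, forcing convergence of the whole sequence. The only cosmetic difference is that the paper invokes a result from Bauschke--Combettes for the closed convexity of the zero set and treats existence/uniqueness of the minimum-norm point as the projection of the origin, where you argue it directly via a minimizing sequence and strict convexity; the substance is identical.
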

\begin{proof}
The set $\SOL(\phi,r)$ agrees with the zeros of the monotone inclusion problem \eqref{eq:MI}. Since $\partial r$ is maximally monotone, and $\phi$ is continuous and monotone, it follows from Corollary 24.4 in \cite{BauCom16} that the set of zeros is closed and convex. Hence, the problem $\inf_{\bx\in\SOL(\phi,r)}\norm{\bx}$ admits a unique solution, proving the first part of the Proposition. For the second part, let $\eta_{t}\downarrow 0$ and $\bx^{\ast}_{t}\equiv\bx^{\ast}(\eta_{t})$ the corresponding sequence of unique solution of $\MVI(\phi^{\eta_{t}},r)$. Since $\{\bx^{\ast}_{t}\}_{t}$ is bounded (Proposition \ref{prop:Tikhonovbounded}), we can pass to a converging subsequence. By an abuse of notation, omitting the relabeling, let us take the full sequence to be converging with limit point $\hat{\bx}$. By Proposition \ref{prop:accumulationTik}, we know $\hat{\bx}\in\SOL(\phi,r)$. In particular, $\norm{\hat{\bx}}\geq \inf_{\bx\in\SOL(\phi,r)}\norm{\bx}$. But then, in view of Proposition \ref{prop:Tikhonovbounded}, it follows $\norm{\hat{\bx}}= \inf_{\bx\in\SOL(\phi,r)}\norm{\bx}$. Since the accumulation point $\hat{\bx}$ is arbitrary, the entire sequence $\{\bx^{\ast}_{t}\}_{t}$ converges with limit $\arg\min_{\bx\in\SOL(\phi,r)}\norm{\bx}$. \qed
\end{proof}

\subsection{Smoothing}
\label{app:smoothing}
%

We let $\ball_{n}\eqdef\{\bx\in\Rn\vert\norm{\bx}\leq 1\}$ denote the unit ball in $\Rn$. The unit sphere is denoted by $\sphere_{n}=\{\bx\in\Rn\vert\norm{\bx}=1\}$. The volume of the unit ball with radius $\delta$ with respect to $n$-dimensional Lebesgue measure is $\Vol_{n}(\delta \ball_{n})=\delta^{n}\frac{\pi^{n/2}}{\Gamma(\frac{n}{2}+1)}$, where $\Gamma(\cdot)$ is the Gamma function. Therefore, the measure 
$\dd\mu_{n}(\bf{u})\eqdef \1_{\{\bf{u}\in\ball_{n}\}}\frac{\dd\bf{u}}{\Vol_{n}(\ball_{n})}$ defines a uniform distribution on the unit ball in $\Rn$. Recall that $\Vol_{n-1}(\delta \sphere_{n})=\frac{n}{\delta}\Vol_{n}(\delta\ball_{n})$ for all $\delta>0$. Given $\delta>0$, and $\scrX$ be a closed convex set in $\Rn$. We define the set $\scrX_{\delta}\eqdef\scrX+\delta\ball_{n}$.
\begin{definition}
Let $h:\Rn\to\R$ be a continuous function. The spherical smoothing of $h$ is defined by
\begin{equation}\label{eq:smoothing}
h^{\delta}(\bx)\eqdef \frac{1}{\Vol_{n}(\delta\ball_{n})} \int_{\delta \ball_{n}}h(\bx+\bu)\dd \bu=\int_{\ball_{n}}h(\bx+\delta\bw)\frac{\dd\bw}{\Vol_{n}(\ball_{n})}.
\end{equation}
\end{definition}
The following properties of the spherical smoothing can be deduced from \cite[Section 9.3.2]{NY83}; see also \cite[Lemma 1]{Cui:2022aa}. 
\begin{fact}
Let $h:\Rn\to\R$ be a continuous function that is $L_{h}$-Lipschitz continuous on $\scrX_{\delta}$. Then, for all $\bx,\by\in\scrX$, we have
\begin{align}
&\abs{h^{\delta}(\bx)-h^{\delta}(\by)}\leq L_{h}\norm{\bx-\by},\\
&\abs{h^{\delta}(\bx)-h(\bx)}\leq L_{h}\delta.
\end{align}
\end{fact}
Using Stoke's Theorem, one can easily show that 
\begin{equation}\label{eq:gradsmoothing}
\begin{split}
\nabla h^{\delta}(\bx)&=\frac{n}{\delta}\int_{\sphere_{n}}h(\bx+\delta \bv)\bv\frac{\dd\bv}{\Vol_{n-1}(\sphere_{n})}\\
&=\frac{n}{\delta}\Ex_{\bW\sim\Uniform(\sphere_{n})}[\bW h(\bx+\delta\bW)]\\
&=\frac{n}{\delta}\Ex_{\bW\sim\Uniform(\sphere_{n})}[\bW\left( h(\bx+\delta\bW)-h(\bx)\right)],
\end{split}
\end{equation}
where $\bW\sim\Uniform(\sphere_{n})$ means that $\bW$ is uniformly distributed on $\sphere_{n}$. A simple application of Jensen's inequality shows then that the spherical smoothing admits a Lipschitz continuous gradient, whose modulus depends on the smoothing parameter $\delta$. 
\begin{fact}
Let $h:\Rn\to\R$ be a continuous function that is $L_{h}$-Lipschitz continuous on $\scrX_{\delta}$. Then, for all $\bx,\by\in\scrX$, we have
\begin{equation}\label{eq:boundgradsmooth}
\norm{\nabla h^{\delta}(\bx)-\nabla h^{\delta}(\by)}\leq \frac{L_{h}n}{\delta}\norm{\bx-\by}.
\end{equation}
\end{fact}
The smoothed function and its gradient is used to construct a variance reduced gradient estimator for the implicit cost function of player $i$ in our hierarchical game problem. 

\subsection{Random sampling}
\label{sec:sampling}
In this section we explain how to construct a random oracle to sample the gradient of the smoothed implicit function $h_{i}^{\delta}$. Let $b\in\N$ denote the batch size. In each round of the algorithm, agent $i$ enters the outer loop procedure, which asks this agent to construct a $n_{i}\times b$ matrix $\bW_{i}^{1:b}=[\bW_{i}^{(1)};\ldots;\bW_{i}^{(b)}]$ satisfying $\norm{\bW_{i}^{(s)}}=1$ for all $1\leq s\leq b$. To construct the uniformly distributed unit vector $\bW_{i}^{(s)}$, we generate $n_{i}$ random numbers $w_{i}^{s}(k)\sim\Normal(0,1)$, and then compute  
$$
\bW_{i}^{(s)}=\frac{1}{\sqrt{\sum_{k=1}^{n_{i}}w_{i}^{s}(k)^{2}}}[w_{i}^{(s)}(1);\ldots;w_{i}^{(s)}(n_{i})]^{\top}\quad s=1,\ldots,b.
$$
The outcome of this procedure is a $n_{i}\times b$ random matrix $\bW^{1:b}_{i}=[\bW_{i}^{(1)};\ldots;\bW^{(b)}_{i}]\in\R^{n_{i}\times b}$ with $\norm{\bW_{i}^{(s)}}=1$ for all $1\leq s\leq b$. In fact, since the Gaussian is spherical, the columns of this matrix will be iid uniformly distributed on $\sphere_{n_{i}}$. Having constructed these random vectors, each agent constructs a gradient estimator involving the finite-difference approximation of the directional derivative 
$$
H^{\delta}_{\bx_{i}}(\bW_{i})\eqdef n_{i}\bW_{i}\nabla_{(\bW_{i},\delta)}h_{i}(\bx_{i}),
,\quad \nabla_{(\bw,\delta)}h_{i}(\bx_{i})\eqdef\frac{h_{i}(\bx_{i}+\delta\bw)-h_{i}(\bx_{i})}{\delta},
$$
as well as its Monte-Carlo variant (with some abuse of notation)
$$
H^{\delta,b}_{\bx_{i}}\eqdef \frac{1}{b}\sum_{s=1}^{b}H^{\delta}_{\bx_{i}}(\bW_{i}^{(s)}).
$$
Note that, since $h_{i}(\cdot)$ is $L_{1,i}$-Lipschitz and directionally differentiable on the convex compact set $\scrX$, we have 
$$
\lim_{\delta\to 0^{+}}\nabla_{(\bw,\delta)}h_{i}(\bx)=h^{\circ}_{i}(\bx_{i},\bw) 
$$
as well $\abs{h^{\circ}_{i}(\bx,\bw)}\leq L_{1,{i}}$ for all $\bw\in\sphere_{n_{i}}$. To understand the statistical properties of this estimator, we need the next Lemma. To simplify the notation, we omit the index of player $i$. 
\begin{lemma}\label{lem:boundH}
Suppose $h$ is $L_{h}$-Lipschitz continuous on $\scrX_{\delta}\eqdef\scrX+\delta\ball$. Define $\ce_{\bx}(\bW^{1:b})\eqdef H^{\delta,b}_{\bx}-\nabla h^{\delta}(\bx)$, where $\bW^{(i)}$ is an i.i.d sample drawn uniformly from the unit sphere $\sphere_{n}$, i.e. $\bW^{1:b}\sim\Uniform(\sphere_{n})^{\otimes b}$. Then 
\begin{itemize}
\item[(a)] $\Ex_{\bW^{1:b}\sim\Uniform(\sphere_{n})^{\otimes b}}[\ce_{\bx}(\bW^{1:b})]=0$;
\item[(b)] $\norm{H_{\bx}^{\delta}(\bw)}^{2}\leq L_{h}^{2}n^{2}$ for all $\bw\in\sphere_{n}$;
\item[(c)] $\Ex_{\bW^{1:b}\sim\Uniform(\sphere_{n})^{\otimes b}}[\norm{\ce_{\bx}(\bW^{1:b})}^{2}]\leq \frac{n^{2}L^{2}_{h}}{b}.$
\end{itemize}
\end{lemma}
\begin{proof}
By linearity of the expectation operator and independence, we see 
\begin{align*}
\Ex_{\bW^{1:b}\sim\Uniform(\sphere_{n})^{\otimes b}}[H^{\delta}_{\bx}(\bW^{(1)},\ldots,\bW^{(b)})]&=\frac{1}{b}\sum_{s=1}^{b}\Ex_{\bW\sim\Uniform(\sphere_{n})}[H^{\delta}_{\bx}(\bW)]=\Ex_{\bW\sim\Uniform(\sphere_{n})}[H^{\delta}_{\bx}(\bW)]\\
&\stackrel{\eqref{eq:gradsmoothing}}{=}\nabla h^{\delta}(\bx).
\end{align*}
This proves part (a). Part (b) is a simple consequence of the following Lipschitz argument:
$$
\norm{H^{\delta}_{\bx}(\bw)}^{2}=\left(\frac{n}{\delta}\right)^{2}\norm{\bw}^{2}\abs{h(\bx+\delta\bw)-h(\bx)}^{2}\leq L^{2}_{h}n^{2},
$$
using $\norm{\bw}=1$. For part (c), observe that for the random i.i.d. sample $\bW^{1:b}=\{\bW^{(1)},\ldots,\bW^{(b)}\}$ taking values in $\sphere_{n}$, we have
\begin{align*}
\norm{\ce_{\bx}(\bW^{1:b})}^{2}=\frac{1}{b^{2}}\norm{\sum_{i=1}^{b}\left(\frac{n}{\delta}H_{\bx}^{\delta}(\bW^{(i)})-\nabla h^{\delta}(\bx)\right)}^{2}=\frac{1}{b^{2}}\norm{\sum_{i=1}^{b}X_{i}}^{2},
\end{align*}
where $X_{i}\eqdef \frac{n}{\delta}H_{\bx}^{\delta}(\bW^{(i)})-\nabla h^{\delta}(\bx)$ are i.i.d zero-mean random variables, almost surely bounded in squared norm. By independence, we have $\Ex[\inner{X_{i},X_{j}}]=0$ for $i\neq j$, so that 
\begin{align*}
\Ex_{\bW^{1:b}\sim\Uniform(\sphere_{n})^{\otimes b}}\left[\norm{\ce_{\bx}(\bW^{1:b})}^{2}\right]&=\frac{1}{b}\left(\Ex_{\bW\sim\Uniform(\sphere_{n})}\left[\norm{H_{\bx}^{\delta}(\bW)}^{2}\right]-\norm{\nabla h^{\delta}(\bx)}^{2}\right)\\
&\leq \frac{1}{b}\Ex_{\bW\sim\Uniform(\sphere_{n})}\left[\norm{H^{\delta}_{\bx}(\bW)}^{2}\right]\leq \frac{n^{2}L_{h}^{2}}{b}.
\end{align*}
\qed
\end{proof}

\subsection{Inexact Implementation}
\label{app:inexact}
%

In the main text we have assumed that the solution of the lower level problem is available exactly. In practice, this is difficult to guarantee, particularly when the lower-level problem is large and possible stochastic. Motivated by this concern, we  outline a modification of our hierarchical game solver in this section, reliant on access to an $\eps$-inexact solution of the lower-level problem.
\begin{definition}
\label{def:inexact}
    Let $\delta_{0}>0$ be given and set $\scrX_{i,\delta_{0}}=\scrX_{i}+\delta_{0}\ball_{n_{i}}$ for all $i\in\scrI$. Given $\eps>0$ and $i \in \scrI$, we call $\by^{\eps}_{i}:\scrX_{i,\delta}\to\scrY_{i}$ an $\eps$-solution of the lower level problem $\VI(\phi_{i}(\bx_{i},\cdot),\scrY_{i})$ if 
$$
\Ex[\norm{\by_{i}^{\eps}(\bx_{i})-\by_{i}(\bx_{i})}\vert\bx_{i}]\leq\eps\quad \quad\text{a.s. } 
$$
\end{definition}

We note that such a solution is immediately available by employing $\mathcal{O}(1/\eps^2)$ steps  of a single-sample stochastic approximation scheme for resolving VI$(\phi_i(\bx_i,\cdot),\scrY_i)$. Similarly, if projection onto $\scrY_i$ is a computationally costly operation, then a geometrically increasing mini-batch scheme provides a similar oracle complexity but requires only $\scrO(\ln(1/\eps^2))$ steps. (cf.~\cite{lei22distributed}).

Under the inexact lower level solution $\by_{i}^{\eps}$, we let $h_{i}^{\eps}(\bx_{i})=g_{i}(\bx_{i},\by_{i}^{\eps}(\bx_{i}))$ denote the resulting implicit function coupling leader $i$ and the associated follower. As in the exact regime, we assume that player $i$ has access to an oracle with which she can construct a spherical approximation of the gradient of the implicit function $h_{i}^{\eps}$. Hence, for given $\delta>0$, we let 
$$
h^{\eps,\delta}_{i}(\bx_{i})\eqdef\int_{\ball_{n_{i}}}h^{\eps}_{i}(\bx_{i}+\delta\bw)\frac{\dd\bw}{\Vol_{n}(\ball_{n_{i}})}.
$$
Using the notation for Section \ref{sec:sampling}, we denote the resulting estimators by $H^{\delta,\eps}_{i,\bx_{i}}(\bW_{i})\eqdef n_{i}\bW_{i}\nabla_{(\bW_{i},\delta)}h^{\eps}_{i}(\bx_{i})$, while the corresponding mini-batch counterpart as 
$$
H^{\delta,\eps,b}_{i,\bx_{i}}\eqdef \frac{1}{b}\sum_{s=1}^{b}H^{\delta,\eps}_{i,\bx_{i}}(\bW_{i}^{(s)}),
$$
Next, we derive some bounds of the thus constructed estimator. To reduce notational clutter, we omit the label of player $i$ in the next Lemma. 
\begin{lemma}
\label{lem:boundvarianceH-inexact}
    Let Assumption \ref{ass:standing} hold true. Then, for all $\delta,\eps>0$ and $\bx\in\scrX$, we have  
\begin{align}
&\Ex\left[\norm{H^{\delta,\eps}_{\bx}(\bW)}^{2}\right\vert\bx]\leq 3\left(\frac{n}{\delta}\right)^{2}(2L^{2}_{2}\eps^{2}+L_{1}^{2}\delta^{2})\label{eq:boundHeps} \text{ and }\\
&\Ex\left[\norm{H^{\delta,\eps}_{\bx}(\bW)-H^{\delta}_{\bx}(\bW)}^{2}\vert\bx\right]\leq \left(\frac{2L_{2,h}n\eps}{\delta}\right)^{2}\label{eq:varianceHeps}
\end{align}
almost surely.
\end{lemma}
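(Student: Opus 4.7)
The plan is to establish both inequalities by elementary triangle-type decompositions, exploiting the two facts that $\norm{\bW}=1$ almost surely on $\sphere_{n}$ and that the implicit loss function $h$ is $L_{1}$-Lipschitz (Lemma~\ref{lem:hisemismooth}/Assumption~\ref{ass:standing}(ii)) while $g(\bx_{i},\cdot)$ is $L_{2}$-Lipschitz (Assumption~\ref{ass:standing}(iv)). Throughout the proof, I would use the interpretation of Definition~\ref{def:inexact} at the second-moment level, i.e.\ $\Ex[\norm{\by^{\eps}(\bx)-\by(\bx)}^{2}\mid\bx]\leq\eps^{2}$ (consistent with the stochastic-approximation complexity discussion immediately following the definition).

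For \eqref{eq:boundHeps}, the first step is to note that
$$\norm{H^{\delta,\eps}_{\bx}(\bW)}^{2}=\frac{n^{2}}{\delta^{2}}\bigl(h^{\eps}(\bx+\delta\bW)-h^{\eps}(\bx)\bigr)^{2}.$$
Then I would insert the exact implicit function and write $h^{\eps}(\bx+\delta\bW)-h^{\eps}(\bx)=T_{1}+T_{2}+T_{3}$, where
$$T_{1}=g(\bx+\delta\bW,\by^{\eps}(\bx+\delta\bW))-g(\bx+\delta\bW,\by(\bx+\delta\bW)),$$
$$T_{2}=h(\bx+\delta\bW)-h(\bx),\qquad T_{3}=g(\bx,\by(\bx))-g(\bx,\by^{\eps}(\bx)).$$
By Assumption~\ref{ass:standing}(iv), $\abs{T_{1}}\leq L_{2}\norm{\by^{\eps}(\bx+\delta\bW)-\by(\bx+\delta\bW)}$ and $\abs{T_{3}}\leq L_{2}\norm{\by^{\eps}(\bx)-\by(\bx)}$; by Lipschitz continuity of $h$, $\abs{T_{2}}\leq L_{1}\delta$. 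Applying $(T_{1}+T_{2}+T_{3})^{2}\leq 3(T_{1}^{2}+T_{2}^{2}+T_{3}^{2})$, taking conditional expectation, and invoking the second-moment interpretation of the $\eps$-accuracy yields $\Ex[\norm{H^{\delta,\eps}_{\bx}(\bW)}^{2}\mid\bx]\leq 3(n/\delta)^{2}\bigl(2L_{2}^{2}\eps^{2}+L_{1}^{2}\delta^{2}\bigr)$, as claimed.

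For \eqref{eq:varianceHeps} the key observation is that the $h$-$h$ piece cancels:
$$H^{\delta,\eps}_{\bx}(\bW)-H^{\delta}_{\bx}(\bW)=\frac{n\bW}{\delta}\Bigl[\bigl(h^{\eps}(\bx+\delta\bW)-h(\bx+\delta\bW)\bigr)-\bigl(h^{\eps}(\bx)-h(\bx)\bigr)\Bigr],$$
so the $L_{1}\delta$ contribution disappears and only terms directly proportional to $\norm{\by^{\eps}-\by}$ remain. Squaring, applying $(a-b)^{2}\leq 2(a^{2}+b^{2})$, bounding each bracket via Assumption~\ref{ass:standing}(iv), and passing to conditional expectation produces $\Ex[\norm{\cdot}^{2}\mid\bx]\leq (2n^{2}/\delta^{2})\cdot 2L_{2}^{2}\eps^{2}=(2L_{2}n\eps/\delta)^{2}$, which matches the stated bound with $L_{2,h}\equiv L_{2}$.

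The only delicate point — and the main obstacle — is the mismatch between Definition~\ref{def:inexact}, which is phrased as a first-moment bound $\Ex[\norm{\by^{\eps}(\bx)-\by(\bx)}\mid\bx]\leq\eps$, and the second-moment control needed above. I would resolve this either by strengthening Definition~\ref{def:inexact} to its quadratic-mean analogue (the standard output of an SA scheme on a strongly monotone VI with $\mathcal{O}(1/\eps^{2})$ samples, cf.\ the remark after the definition), or by relabelling $\eps^{2}\leftarrow\Ex[\norm{\by^{\eps}-\by}^{2}\mid\bx]$ throughout. Once this convention is fixed, both inequalities reduce to the elementary decompositions above.
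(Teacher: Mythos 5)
Your proposal is correct, and for \eqref{eq:boundHeps} it is essentially identical to the paper's argument: the same three-term insertion of the exact implicit function, the bound $(T_1+T_2+T_3)^2\leq 3(T_1^2+T_2^2+T_3^2)$, the $L_2$-Lipschitz estimate in $\by$ for the two inexactness terms, and the $L_1\delta$ bound for the exact increment. For \eqref{eq:varianceHeps} your route differs slightly and is in fact tighter than the paper's: the paper bounds $\norm{H^{\delta,\eps}_{\bx}(\bW)-H^{\delta}_{\bx}(\bW)}$ by the triangle inequality and takes a conditional expectation, which only yields the \emph{first}-moment estimate $\Ex[\norm{\cdot}\mid\bx]\leq 2L_{2,h}n\eps/\delta$, even though the lemma asserts a second-moment bound; you instead square first, apply $(a-b)^2\leq 2(a^2+b^2)$, and land directly on $(2L_{2}n\eps/\delta)^2$, which is what the statement actually requires. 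Your flagged ``delicate point'' is also well taken: Definition~\ref{def:inexact} is phrased as a first-moment bound $\Ex[\norm{\by^{\eps}(\bx)-\by(\bx)}\mid\bx]\leq\eps$, yet both the paper's proof and yours need $\Ex[\norm{\by^{\eps}(\bx)-\by(\bx)}^{2}\mid\bx]\leq\eps^{2}$; the paper applies the definition at the second-moment level without comment (Jensen goes the wrong way here), whereas you make the required strengthening explicit, which is the honest way to close the argument. So there is no gap in your proposal beyond the convention you already identify and resolve; if anything, your version repairs two small imprecisions in the paper's own proof.
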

\begin{proof}
We have 
\begin{align*}
&\norm{H^{\delta,\eps}_{\bx}(\bW)}^{2}=\left(\frac{n}{\delta}\right)^{2}\norm{\bW\left[h^{\eps}(\bx+\delta\bW)-h(\bx+\delta\bW)+h(\bx+\delta\bW)+h(\bx)-h(\bx)-h^{\eps}(\bx)\right]}^{2}\\
&\leq 3\left(\frac{n}{\delta}\right)^{2}\left[\norm{\bW(h^{\eps}(\bx+\delta\bW)-h(\bx+\delta\bW)}^{2}+\norm{\bW(h^{\eps}(\bx)-h(\bx))}^{2}+\norm{\bW(h(\bx+\delta\bW)-h(\bx))}^{2}\right].
\end{align*}
We bound each of the three terms separately. First, by Assumption \ref{ass:standing}.(iv), we note
\begin{align*}
\norm{\bW(h^{\eps}(\bx+\delta\bW)-h(\bx+\delta\bW)}^{2}&\leq \norm{g(\bx+\delta\bW,\by^{\eps}(\bx+\delta\bW))-g(\bx+\delta\bW,\by(\bx+\delta\bW))}^{2}\\
&\leq L^{2}_{2}\norm{\by^{\eps}(\bx+\delta\bW)-\by(\bx+\delta\bW)}^{2}.
\end{align*}
Consequently, using Definition \ref{def:inexact}, we obtain 
$$
\Ex\left[\norm{\bW(h^{\eps}(\bx+\delta\bW)-h(\bx+\delta\bW)}^{2}\vert\bx\right]\leq L^{2}_{2}\eps^{2}.
$$
Second,
\begin{align*}
\norm{\bW(h^{\eps}(\bx)-h(\bx))}^{2}\leq \norm{g(\bx,\by^{\eps}(\bx))-g(\bx,\by(\bx)}^{2}\leq L^{2}_{2}\norm{\by(\bx)-\by^{\eps}(\bx)}^{2}.
\end{align*}
Invoking again Definition \ref{def:inexact}, it follows 
$$
\Ex\left[\norm{\bW(h^{\eps}(\bx)-h(\bx))}^{2}\vert\bx\right]\leq L^{2}_{2}\eps^{2}.
$$
Third, by Assumption \ref{ass:standing}.(ii) and since $\bW\in\sphere$, we conclude
\begin{align*}
\norm{\bW(h(\bx+\delta\bW)-h(\bx))}^{2}\leq L_{1}^{2}\norm{\delta\bW}^{2}=L^{2}_{1}\delta^{2}.
\end{align*}
Summarizing all these bounds, we obtain \eqref{eq:boundHeps}. To show \eqref{eq:varianceHeps}, we first note 
\begin{align*}
\norm{H^{\delta,\eps}_{\bx}(\bW)-H^{\delta}_{\bx}(\bW)}&\leq n\norm{\bW\frac{g(\bx+\delta\bW,\by^{\eps}(\bx+\delta\bW))-g(\bx+\delta\bW,\by(\bx+\delta\bW))}{\delta}}\\
&+n\norm{\bW\frac{g(\bx,\by^{\eps}(\bx))-g(\bx,\by(\bx))}{\delta}}\\
&\leq \frac{L_{2,h}n}{\delta}\norm{\by^{\eps}(\bx+\delta\bW)-\by(\bx+\delta\bW)}+\frac{nL_{2,h}}{\delta}\norm{\by^{\eps}(\bx)-\by(\bx)}.
\end{align*}
Taking expectations on both sides, it follows 
$$
\Ex\left[\norm{H^{\delta,\eps}_{\bx}(\bW)-H^{\delta}_{\bx}(\bW)}\vert\bx\right]\leq \frac{2L_{2,h}n\eps}{\delta}.
$$
\qed
\end{proof}

\end{appendix}

\bibliographystyle{plain}
\bibliography{mybib,uvsbib}

\end{document}